\newtheorem{theorem}{Theorem}[section]
\newtheorem{proposition}[theorem]{Proposition}
\newtheorem{lemma}{Lemma}[section]
\newtheorem{remark}{Remark}[section]
\begin{document}
\title{Random time-changes and asymptotic results for a class of
continuous-time Markov chains on integers with alternating
rates\thanks{The authors acknowledge the support of: GNAMPA and GNCS groups
of INdAM (Istituto Nazionale di Alta Matematica); MIUR--PRIN 2017, Project
‘Stochastic Models for Complex Systems’ (no. 2017JFFHSH); MIUR Excellence
Department Project awarded to the Department of Mathematics, University 
of Rome Tor Vergata (CUP E83C18000100006).}}
\author{Luisa Beghin\thanks{Dipartimento di Scienze Statistiche,
Sapienza Universit\`{a} di Roma, Piazzale Aldo Moro 5, 00185 Rome,
Italy. e-mail: \texttt{luisa.beghin@uniroma1.it}}\and Claudio
Macci\thanks{Dipartimento di Matematica, Universit\`a di Roma Tor
Vergata, Via della Ricerca Scientifica, 00133 Rome, Italy. e-mail:
\texttt{macci@mat.uniroma2.it}}\and Barbara
Martinucci\thanks{Dipartimento di Matematica, Universit\`{a} degli
Studi di Salerno, Via Giovanni Paolo II n. 132, 84084 Fisciano,
SA, Italy. e-mail: \texttt{bmartinucci@unisa.it}}}
\date{}
\maketitle
\begin{abstract}
We consider continuous-time Markov chains on integers which allow
transitions to adjacent states only, with alternating rates. We
give explicit formulas for probability generating functions, and
also for means, variances and state probabilities of the random
variables of the process. Moreover we study independent random
time-changes with the inverse of the stable subordinator, the
stable subordinator and the tempered stable subodinator. We also
present some asymptotic results in the fashion of large
deviations. These results give some generalizations of those
presented in \cite{DicrescenzoMacciMartinucci}.\\
\ \\
\emph{AMS Subject Classification:} 60F10; 60J27; 60G22; 60G52.\\
\emph{Keywords:} large deviations, moderate deviations, fractional
process, tempered stable subordinator.
\end{abstract}

\section{Introduction}
We consider a class of continuous-time Markov chains on integers
which can have transitions to adjacent states only, and with
alternating transition rates to their adjacent states; namely we
assume to have the same transition rates for the odd states, and
the same transition rates for the even states. We recall that 
Markov chains with alternating rates are useful in the study of 
chain molecular diffusion; see e.g. \cite{TarabiaTakagiElbaz} 
and other references cited in \cite{DicrescenzoMacciMartinucci}.
In this paper we also study independent random time-changes of 
these Markov chains with the inverse of the stable subordinator
and the (possibly tempered) stable subordinator.

We give a more rigorous presentation in terms of the generator. In
general we consider a continuous-time Markov chain $\{X(t):t\geq
0\}$ on $\mathbb{Z}$ (where $\mathbb{Z}$ is the set of integers),
and we consider the state probabilities
\begin{equation}\label{eq:pmf-notation}
p_{k,n}(t):=P(X(t)=n|X(0)=k),
\end{equation}
which satisfy the condition $p_{k,n}(0)=1_{\{k=n\}}$; the
generator $G=(g_{k,n})_{k,n\in\mathbb{Z}}$ of $\{X(t):t\geq 0\}$
is defined by
$$g_{k,n}:=\lim_{t\to 0}\frac{p_{k,n}(t)-p_{k,n}(0)}{t}.$$
Then, for some $\alpha_1,\alpha_2,\beta_1,\beta_2>0$, we assume to
have (see Figure \ref{fig1})
$$g_{k,n}:=\left\{\begin{array}{ll}
\alpha_1&\ \mbox{if}\ n=k+1\ \mbox{and}\ k\ \mbox{is even}\\
\beta_1&\ \mbox{if}\ n=k+1\ \mbox{and}\ k\ \mbox{is odd}\\
\alpha_2&\ \mbox{if}\ n=k-1\ \mbox{and}\ k\ \mbox{is even}\\
\beta_2&\ \mbox{if}\ n=k-1\ \mbox{and}\ k\ \mbox{is odd}\\
0&\ \mbox{otherwise}
\end{array}\right.\ (\mbox{for}\ k\neq n);$$
therefore
$$g_{n,n}=\left\{\begin{array}{ll}
-(\alpha_1+\alpha_2)&\ \mbox{if}\ n\ \mbox{is even}\\
-(\beta_1+\beta_2)&\ \mbox{if}\ n\ \mbox{is odd}.
\end{array}\right.$$

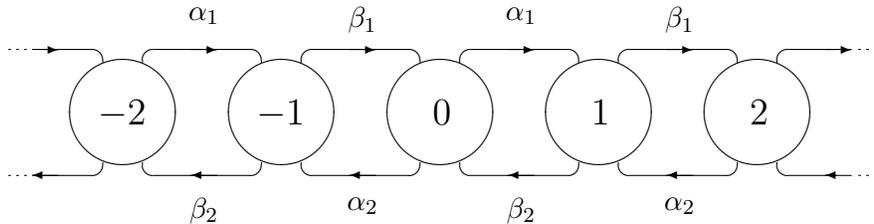
\begin{figure}\label{fig1}
\begin{center}
\begin{picture}(341,91)
\put(39.25,45.25){\circle{38}} \put(99.25,45.25){\circle{38}}
\put(159.25,45.25){\circle{38}} \put(219.25,45.25){\circle{38}}
\put(279.25,45.25){\circle{38}}
\put(9.25,64){\oval(45,10)[rt]} \put(69.25,64){\oval(45,10)[t]}
\put(129.25,64){\oval(45,10)[t]} \put(189.25,64){\oval(45,10)[t]}
\put(249.25,64){\oval(45,10)[t]} \put(309.25,64){\oval(45,10)[lt]}
\put(9.25,26.3){\oval(45,10)[rb]}
\put(69.25,26.3){\oval(45,10)[b]}
\put(129.25,26.3){\oval(45,10)[b]}
\put(189.25,26.3){\oval(45,10)[b]}
\put(249.25,26.3){\oval(45,10)[b]}
\put(309.25,26.3){\oval(45,10)[lb]}
\put(19,35){\makebox(40,15)[t]{\Large $-2$}}
\put(79,35){\makebox(40,15)[t]{\Large $-1$}}
\put(140,35){\makebox(40,15)[t]{\Large $0$}}
\put(200,35){\makebox(40,15)[t]{\Large $1$}}
\put(260,35){\makebox(40,15)[t]{\Large $2$}}
\put(5,69){\vector(1,0){10}} \put(65,69){\vector(1,0){10}}
\put(125,69){\vector(1,0){10}} \put(185,69){\vector(1,0){10}}
\put(245,69){\vector(1,0){10}} \put(305,69){\vector(1,0){10}}
\put(15,21.3){\vector(-1,0){10}} \put(75,21.3){\vector(-1,0){10}}
\put(135,21.3){\vector(-1,0){10}}
\put(195,21.3){\vector(-1,0){10}}
\put(255,21.3){\vector(-1,0){10}}
\put(315,21.3){\vector(-1,0){10}}
\put(35,70){\makebox(70,15)[t]{$\alpha_1$}}
\put(105,70){\makebox(50,15)[t]{$\beta_1$}}
\put(155,70){\makebox(70,15)[t]{$\alpha_1$}}
\put(225,70){\makebox(50,15)[t]{$\beta_1$}}
\put(45,-2){\makebox(50,15)[t]{$\beta_2$}}
\put(95,-2){\makebox(70,15)[t]{$\alpha_2$}}
\put(165,-2){\makebox(50,15)[t]{$\beta_2$}}
\put(215,-2){\makebox(70,15)[t]{$\alpha_2$}}
\put(3,69){\line(-1,0){1}} \put(3,21.3){\line(-1,0){1}}
\put(0,69){\line(-1,0){1}} \put(0,21.3){\line(-1,0){1}}
\put(-3,69){\line(-1,0){1}} \put(-3,21.3){\line(-1,0){1}}
\put(317.5,69){\line(1,0){1}} \put(317.5,21.3){\line(1,0){1}}
\put(320.5,69){\line(1,0){1}} \put(320.5,21.3){\line(1,0){1}}
\put(323.5,69){\line(1,0){1}} \put(323.5,21.3){\line(1,0){1}}
\end{picture}
\end{center}
\vspace{-0.7cm} \caption{Transition rate diagram of $\{X(t):t\geq
0\}$.}
\end{figure}

We remark that this is a generalization of the model in
\cite{DicrescenzoMacciMartinucci}; in fact we recover that model
by setting
$$\left\{\begin{array}{ll}
\alpha_1=\lambda\eta+\mu(1-\eta)\\
\beta_1=\mu\eta+\lambda(1-\eta)\\
\alpha_2=\lambda\theta+\mu(1-\theta)\\
\beta_2=\mu\theta+\lambda(1-\theta)
\end{array}\right.$$
for $\lambda,\mu>0$ and $\eta,\theta\in[0,1]$; moreover the case
$(\theta,\eta)=(1,1)$ was studied in
\cite{DicrescenzoIulianoMartinucci}, whereas the case
$(\theta,\eta)=(0,1)$ identifies the model investigated in
\cite{ConollyParthasarathyDharmaraja} and
\cite{TarabiaTakagiElbaz}. 

In particular we extend the results in
\cite{DicrescenzoMacciMartinucci} by giving explicit expressions
of the probability generating function, mean and variance of
$X(t)$ (for each fixed $t>0$), and we study the asymptotic
behavior (as $t\to\infty$) in the fashion of large deviations.
Here we also give explicit expressions of the state probabilities.

Moreover we consider some random time-changes of the basic model
$\{X(t):t\geq 0\}$, with independent processes. This is motivated
by the great interest that the theory of random time-changes (and
subordination) is being receiving starting from \cite{Bochner}
(see also \cite{Schilling}). In particular this theory allows to
construct non-standard models which are useful for possible 
applications in different fields; indeed, in many circumstances, 
the process is more realistically assumed to evolve according to 
a random (so-called operational) time, instead of the usual 
deterministic one. A wide class of random time-changes concerns 
subordinators, namely nondecreasing L\'{e}vy processes (see, for
example, \cite{Sato}, \cite{KumarNaneVellaisamy},
\cite{MeerschaertNaneVellaisamy} and \cite{OrsingherBeghin},
\cite{DicrescenzoMartinucciZacks}); recent works with different
kind of random time-changes are \cite{DingGieseckeTomecek},
\cite{BeghinOrsingherJoSP2016} and \cite{DovidioOrsingherToaldo}.

The random time-changes of $\{X(t):t\geq 0\}$ studied in this
paper are related to fractional differential equations and stable
processes. More precisely we consider:
\begin{enumerate}
\item the inverse of the stable subordinator $\{T^\nu(t):t\geq
0\}$;
\item the (possibly tempered) stable subordinator
$\{\tilde{S}^{\nu,\mu}(t):t\geq 0\}$ for $\nu\in(0,1)$ and
$\mu\geq 0$ (we have the tempered case when $\mu>0$).
\end{enumerate}
In both cases, i.e. for both $\{X(T^\nu(t)):t\geq 0\}$ and 
$\{X(\tilde{S}^{\nu,\mu}(t)):t\geq 0\}$, we provide expressions 
for the state probabilities in terms of the generalized Fox-Wright 
function. We recall \cite{HoudreKawai}, \cite{Rosinski} and
\cite{SabzikarMeerschaertChen} among the references with the
tempered stable subordinator. Typically these two random
time-changes are associated to some generalized derivative in the
literature; namely the Caputo left fractional derivative (see, for
example, (2.4.14) and (2.4.15) in \cite{KilbasSrivastavaTrujillo})
in the first case, and the shifted fractional derivative (see (6)
in \cite{BeghinJCP}; see also (17) in \cite{BeghinJCP} for the
connections with the fractional Riemann-Liouville derivative) in
the second case.

We also try to extend the large deviation results for $\{X(t):t\geq 0\}$ 
to the cases with a random time-change considered in this paper. It is 
useful to remark that all the large deviation principles in this paper 
are proved by applications of the G\"{a}rtner Ellis Theorem; moreover 
these large deviation principles yield the convergence (at least in 
probability) to the values at which the large deviation rate functions 
uniquely vanish. Thus, motivated by potential applications, when dealing 
with large deviation principles with the same speed function, we compare 
the rate functions to establish if we have a faster or slower convergence
(if they are comparable). In conclusion the evaluation of the rate function
can be an important task, in particular when they are given in terms of a
variational formula (as happens with the application of the G\"{a}rtner 
Ellis Theorem).

The applications of the G\"{a}rtner Ellis Theorem are based on suitable 
limits of moment generating functions. So, in view of the applications of 
this theorem, we study the probability generating functions of the random
variables of the processes; in particular the formulas obtained for 
$\{X(T^\nu(t)):t\geq 0\}$ have some analogies with many results in the 
literature for other time-fractional processes (for instance the probability
generating functions are expressed in terms of the Mittag-Leffler function),
with both continuous and discrete state space (see, for example, 
\cite{MeerschaertNaneVellaisamy}, \cite{HahnKobayashiUmarov}, 
\cite{BeghinMacciJAP2014} and \cite{Iksanov-etal}). For 
$\{X(T^\nu(t)):t\geq 0\}$ we can consider large deviations only (the 
difficulties to obtain a moderate deviation result are briefly discussed);
moreover we compute (and plot) different large deviation rate functions for 
various choices of $\nu\in(0,1)$ and we conclude that, the smaller is $\nu$, 
the faster is the convergence of $\frac{X^\nu(t)}{t}$ to zero (as $t\to\infty$).
For $\{X(\tilde{S}^{\nu,\mu}(t)):t\geq 0\}$ we can obtain large and moderate deviations
for the tempered case $\mu>0$ only; in fact in this case we can apply the G\"{a}rtner 
Ellis Theorem because we have light-tailed distributed random variables (namely the 
moment generating functions of the involved random variables are finite in a 
neighborhood of the origin).

There are some references in the literature with applications of the G\"{a}rtner Ellis 
Theorem to time-changed processes. However there are very few cases where the random 
time-change is given by the inverse of the stable subordinator; see e.g. 
\cite{GajdaMagdziarz} and \cite{WangChang} where the time-changed processes are 
fractional Brownian motions. We are not aware of any other references where the 
time-changed process takes values on $\mathbb{Z}$.

We conclude with the outline of the paper. Section
\ref{sec:preliminaries} is devoted to some preliminaries on large
deviations. In Section \ref{sec:non-fractional} we present the
results for the basic model, i.e. the (non-fractional) process
$\{X(t):t\geq 0\}$. Finally we present some results for the
process $\{X(t):t\geq 0\}$ with random time-changes: the case with
the inverse of the stable subordinator is studied in Section
\ref{sec:time-fractional}, the case with the (possibly tempered)
stable subodinator is studied in Section
\ref{sec:time-change-TSS}. The final appendix (Section
\ref{sec:pmf-expressions}) is devoted to the state probabilities
expressions.

\section{Preliminaries on large deviations}\label{sec:preliminaries}
Some results in this paper concerns the theory of large
deviations; so, in this section, we recall some preliminaries (see
e.g. \cite{DemboZeitouni}, pages 4-5). A family of probability
measures $\{\pi_t:t>0\}$ on a topological space $\mathcal{Y}$
satisfies the large deviation principle (LDP for short) with rate
function $I$ and speed function $v_t$ if:
$\lim_{t\to+\infty}v_t=+\infty$, $I:\mathcal{Y}\to[0,+\infty]$ is
lower semicontinuous,
$$\liminf_{t\to+\infty}\frac{1}{v_t}\log\pi_t(O)\geq -\inf_{y\in O}I(y)$$
for all open sets $O$, and
$$\limsup_{t\to+\infty}\frac{1}{v_t}\log\pi_t(C)\leq -\inf_{y\in C}I(y)$$
for all closed sets $C$. A rate function is said to be good if all
its level sets $\{\{y\in\mathcal{Y}:I(y)\leq\eta\}:\eta\geq 0\}$
are compact.

We also present moderate deviation results. This terminology is
used when, for each family of positive numbers $\{a_t:t>0\}$ such
that $a_t\to 0$ and $ta_t\to\infty$, we have a family of laws of
centered random variables (which depend on $a_t$), which satisfies
the LDP with speed function $1/a_t$, and they are governed by the
same quadratic rate function which uniquely vanishes at zero (for
every choice of $\{a_t:t>0\}$). More precisely we have a rate
function $J(y)=\frac{y^2}{2\sigma^2}$, for some $\sigma^2>0$.
Typically moderate deviations fill the gap between a convergence
to zero of centered random variables, and a convergence in
distribution to a centered Normal distribution with variance
$\sigma^2$.

The main large deviation tool used in this paper is the
G\"{a}rtner Ellis Theorem (see e.g. Theorem 2.3.6 in
\cite{DemboZeitouni}).

\section{Results for the basic model (non-fractional case)}\label{sec:non-fractional}
In this section we present the results for the basic model. Some
of them will be used for the models with random time-changes in
the next sections. We start with some non-asymptotic results,
where $t$ is fixed, which concern probability generating
functions, means and variances. In the second part we present the
asymptotic results, namely large and (moderate) deviation results
as $t\to\infty$.

In particular the probability generating functions
$\{F_k(\cdot,t):k\in\mathbb{Z},t\geq 0\}$ are important in both
parts; they are defined by
$$F_k(z,t):=\mathbb{E}\left[z^{X(t)}|X(0)=k\right]=\sum_{n=-\infty}^\infty z^np_{k,n}(t)\ (\mbox{for}\ k\in\mathbb{Z}),$$
where $\{p_{k,n}(t):k,n\in\mathbb{Z},t\geq 0\}$ are the state
probabilities in \eqref{eq:pmf-notation}.

We also have to consider the function
$\Lambda:\mathbb{R}\to\mathbb{R}$ defined by
\begin{equation}\label{eq:def-Lambda}
\Lambda(\gamma):=\frac{h(e^\gamma)}{e^\gamma}-\frac{\alpha_1+\alpha_2+\beta_1+\beta_2}{2},
\end{equation}
where
\begin{equation}\label{eq:hz}
\left.\begin{array}{l}
h(z):=\frac{1}{2}\sqrt{\tilde{h}(z;\alpha_1,\alpha_2,\beta_1,\beta_2)},\ \mbox{where}\\
\tilde{h}(z;\alpha_1,\alpha_2,\beta_1,\beta_2):=(\alpha_1+\alpha_2-(\beta_1+\beta_2))^2z^2+4(\beta_1z^2+\beta_2)(\alpha_1z^2+\alpha_2).
\end{array}\right.
\end{equation}

\begin{remark}\label{rem:exchange-parameters}
The non-asymptotic results presented below depend on $k=X(0)$, and
we have different formulations when $k$ is odd or even. In
particular we can reduce from a case to another by exchanging
$(\alpha_1,\alpha_2)$ and $(\beta_1,\beta_2)$. On the contrary $k$
is negligible for the asymptotic results; in fact
$\tilde{h}(z;\alpha_1,\alpha_2,\beta_1,\beta_2)=\tilde{h}(z;\beta_1,\beta_2,\alpha_1,\alpha_2)$,
and we have an analogous property for the function $\Lambda$, for
its first derivative $\Lambda^\prime$ and its second derivative
$\Lambda^{\prime\prime}$.
\end{remark}

The function $\Lambda$ is the analogue of the function $\Lambda$
in equation (14) in \cite{DicrescenzoMacciMartinucci}, and plays a
crucial role in the proofs of the large (and moderate) deviation
results. However we refer to this function also for the
non-asymptotic results in order to have simpler expressions; in
particular we refer to the derivatives $\Lambda^\prime(0)$ and
$\Lambda^{\prime\prime}(0)$ and therefore we present the following
lemma.

\begin{lemma}\label{lem:2-derivatives-Lambda-origin}
Let $\Lambda$ be the function in \eqref{eq:def-Lambda}. Then we
have
$$\Lambda^\prime(0)=\frac{2(\alpha_1\beta_1-\alpha_2\beta_2)}{\alpha_1+\alpha_2+\beta_1+\beta_2}$$
and
$$\Lambda^{\prime\prime}(0)=\frac{4(\alpha_1\beta_1+\alpha_2\beta_2)}{\alpha_1+\alpha_2+\beta_1+\beta_2}
-\frac{8(\alpha_1\beta_1-\alpha_2\beta_2)^2}{(\alpha_1+\alpha_2+\beta_1+\beta_2)^3}.$$
Moreover $\Lambda^{\prime\prime}(0)>0$; in fact
$$\Lambda^{\prime\prime}(0)=\frac{4\{(\alpha_1\beta_1+\alpha_2\beta_2)[(\alpha_1+\alpha_2)^2+(\beta_1+\beta_2)^2
+2\alpha_1\beta_2+2\alpha_2\beta_1]+8\alpha_1\alpha_2\beta_1\beta_2\}}{(\alpha_1+\alpha_2+\beta_1+\beta_2)^3}.$$
\end{lemma}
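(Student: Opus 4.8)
The plan is to compute $\Lambda'(0)$ and $\Lambda''(0)$ directly from the definition \eqref{eq:def-Lambda}, and then to establish positivity of $\Lambda''(0)$ by an algebraic rearrangement. First I would introduce the substitution $z=e^\gamma$ so that $\Lambda(\gamma)=g(e^\gamma)-\frac{\alpha_1+\alpha_2+\beta_1+\beta_2}{2}$ where $g(z):=h(z)/z$; then by the chain rule $\Lambda'(\gamma)=z g'(z)$ and $\Lambda''(\gamma)=z g'(z)+z^2 g''(z)$, both evaluated at $z=e^\gamma$, so that at $\gamma=0$ we need $g(1)$, $g'(1)$, $g''(1)$, i.e. essentially the first three Taylor coefficients of $h$ at $z=1$. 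Since $h(z)=\tfrac12\sqrt{\tilde h(z)}$ with $\tilde h$ an explicit quartic in $z$, I would compute $\tilde h(1)$, $\tilde h'(1)$, $\tilde h''(1)$ from \eqref{eq:hz}: one checks $\tilde h(1)=(\alpha_1+\alpha_2+\beta_1+\beta_2)^2$, so $h(1)=\tfrac12(\alpha_1+\alpha_2+\beta_1+\beta_2)$ and the constant term of $\Lambda$ indeed cancels, consistent with $\Lambda(0)=0$.

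Next I would differentiate $h=\tfrac12\tilde h^{1/2}$: $h'=\tfrac{\tilde h'}{4\sqrt{\tilde h}}$ and $h''=\tfrac{\tilde h''}{4\sqrt{\tilde h}}-\tfrac{(\tilde h')^2}{8\tilde h^{3/2}}$. Evaluating at $z=1$ and using $g(z)=h(z)/z$, $g'=\tfrac{h'z-h}{z^2}$, $g''=\tfrac{h''z^2-2h'z+2h}{z^3}$, one assembles $\Lambda'(0)=g'(1)=h'(1)-h(1)$ and $\Lambda''(0)=g'(1)+g''(1)=h''(1)-h'(1)$. Plugging in the values of $\tilde h(1),\tilde h'(1),\tilde h''(1)$ and simplifying should yield exactly
$$\Lambda'(0)=\frac{2(\alpha_1\beta_1-\alpha_2\beta_2)}{\alpha_1+\alpha_2+\beta_1+\beta_2},\qquad
\Lambda''(0)=\frac{4(\alpha_1\beta_1+\alpha_2\beta_2)}{\alpha_1+\alpha_2+\beta_1+\beta_2}-\frac{8(\alpha_1\beta_1-\alpha_2\beta_2)^2}{(\alpha_1+\alpha_2+\beta_1+\beta_2)^3}.$$
This part is routine but bookkeeping-heavy; the symmetry noted in Remark \ref{rem:exchange-parameters}, namely invariance under swapping $(\alpha_1,\alpha_2)\leftrightarrow(\beta_1,\beta_2)$, gives a useful sanity check on the final expressions.

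For the positivity claim I would put $\Lambda''(0)$ over the common denominator $(\alpha_1+\alpha_2+\beta_1+\beta_2)^3$, obtaining numerator $4\big[(\alpha_1\beta_1+\alpha_2\beta_2)(\alpha_1+\alpha_2+\beta_1+\beta_2)^2-2(\alpha_1\beta_1-\alpha_2\beta_2)^2\big]$, and then expand $(\alpha_1+\alpha_2+\beta_1+\beta_2)^2=(\alpha_1+\alpha_2)^2+(\beta_1+\beta_2)^2+2(\alpha_1\beta_1+\alpha_2\beta_2)+2(\alpha_1\beta_2+\alpha_2\beta_1)$. The key identity is that the leftover terms recombine: $2(\alpha_1\beta_1+\alpha_2\beta_2)^2-2(\alpha_1\beta_1-\alpha_2\beta_2)^2=8\alpha_1\alpha_2\beta_1\beta_2$, which produces precisely the stated numerator
$$4\{(\alpha_1\beta_1+\alpha_2\beta_2)[(\alpha_1+\alpha_2)^2+(\beta_1+\beta_2)^2+2\alpha_1\beta_2+2\alpha_2\beta_1]+8\alpha_1\alpha_2\beta_1\beta_2\},$$
and every summand there is positive since $\alpha_1,\alpha_2,\beta_1,\beta_2>0$. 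The main obstacle is simply controlling the algebra in the second-derivative computation without sign or combinatorial slips; the cleanest route is to carry $\tilde h(1),\tilde h'(1),\tilde h''(1)$ symbolically, reduce to $\Lambda''(0)=h''(1)-h'(1)$ first, and only substitute at the very end, with the swap symmetry and the known vanishing $\Lambda(0)=0$ as checkpoints.
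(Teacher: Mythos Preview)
Your approach is essentially identical to the paper's: both differentiate $\Lambda(\gamma)=h(e^\gamma)/e^\gamma-\text{const}$ via the chain rule, reduce $\Lambda'(0)$ and $\Lambda''(0)$ to the values $h(1),h'(1),h''(1)$, and then substitute using $\tilde h$ and its derivatives. However, you have a bookkeeping slip exactly where you warned yourself to be careful: from your own formulas $g'(1)=h'(1)-h(1)$ and $g''(1)=h''(1)-2h'(1)+2h(1)$ one obtains
\[
\Lambda''(0)=g'(1)+g''(1)=h''(1)-h'(1)+h(1),
\]
not $h''(1)-h'(1)$ as you wrote; the paper records the same thing as $\Lambda''(0)=h''(1)-\Lambda'(0)$. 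With that correction the plan goes through, and your explicit positivity argument (via the identity $2(\alpha_1\beta_1+\alpha_2\beta_2)^2-2(\alpha_1\beta_1-\alpha_2\beta_2)^2=8\alpha_1\alpha_2\beta_1\beta_2$) is in fact more detailed than the paper, which simply states the final rearranged form and omits the intermediate steps.
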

\begin{proof}
We have
$$\Lambda^\prime(\gamma)=\frac{h^\prime(e^\gamma)e^{2\gamma}-e^\gamma h(e^\gamma)}{e^{2\gamma}}=h^\prime(e^\gamma)-e^{-\gamma}h(e^\gamma),$$
which yields $\Lambda^\prime(0)=h^\prime(1)-h(1)$, and
$$\Lambda^{\prime\prime}(\gamma)=h^{\prime\prime}(e^\gamma)e^\gamma-(-e^{-\gamma}h(e^\gamma)+h^\prime(e^\gamma))=
h^{\prime\prime}(e^\gamma)e^\gamma+e^{-\gamma}h(e^\gamma)-h^\prime(e^\gamma),$$
which yields
$\Lambda^{\prime\prime}(0)=h^{\prime\prime}(1)+h(1)-h^\prime(1)=h^{\prime\prime}(1)-\Lambda^\prime(0)$.

The desired equalities can be checked with some cumbersome
computations. In particular we can check that
$$h(1)=\frac{1}{2}\sqrt{(\alpha_1+\alpha_2-(\beta_1+\beta_2))^2+4(\beta_1+\beta_2)(\alpha_1+\alpha_2)}=\frac{\alpha_1+\alpha_2+\beta_1+\beta_2}{2}$$
and
\begin{align*}
h^\prime(1)=\left.\frac{1}{2}\frac{2(\alpha_1+\alpha_2-(\beta_1+\beta_2))^2z+4[2z\beta_1(\alpha_1z^2+\alpha_2)+2z\alpha_1(\beta_1z^2+\beta_2)]}
{2\sqrt{(\alpha_1+\alpha_2-(\beta_1+\beta_2))^2z^2+4(\beta_1z^2+\beta_2)(\alpha_1z^2+\alpha_2)}}\right|_{z=1}\\
=\frac{(\alpha_1+\alpha_2)^2+(\beta_1+\beta_2)^2+6\alpha_1\beta_1+2\alpha_1\beta_2+2\alpha_2\beta_1-2\alpha_2\beta_2}{2(\alpha_1+\alpha_2+\beta_1+\beta_2)};
\end{align*}
moreover, if we use the symbol $g$ in place of
$\tilde{h}(\cdot;\alpha_1,\alpha_2,\beta_1,\beta_2)$ in
\eqref{eq:hz} for simplicity, we can check that
$$g(1)=(\alpha_1+\alpha_2+\beta_1+\beta_2)^2,$$
$$g^\prime(1)=2[(\alpha_1+\alpha_2-(\beta_1+\beta_2))^2+8\alpha_1\beta_1+4\alpha_1\beta_2+4\alpha_2\beta_1],$$
$$g^{\prime\prime}(1)=2[(\alpha_1+\alpha_2-(\beta_1+\beta_2))^2+24\alpha_1\beta_1+4\alpha_1\beta_2+4\alpha_2\beta_1],$$
and
\begin{multline*}
h^{\prime\prime}(1)=\frac{2g(1)g^{\prime\prime}(1)-(g^\prime(1))^2}{8(g(1))^{3/2}}\\
=\frac{4(\alpha_1+\alpha_2+\beta_1+\beta_2)^2[(\alpha_1+\alpha_2-(\beta_1+\beta_2))^2+24\alpha_1\beta_1+4\alpha_1\beta_2+4\alpha_2\beta_1]}
{8(\alpha_1+\alpha_2+\beta_1+\beta_2)^3}\\
-\frac{4[(\alpha_1+\alpha_2-(\beta_1+\beta_2))^2+8\alpha_1\beta_1+4\alpha_1\beta_2+4\alpha_2\beta_1]^2}{8(\alpha_1+\alpha_2+\beta_1+\beta_2)^3}\\
=\frac{(\alpha_1+\alpha_2-(\beta_1+\beta_2))^2+24\alpha_1\beta_1+4\alpha_1\beta_2+4\alpha_2\beta_1}
{2(\alpha_1+\alpha_2+\beta_1+\beta_2)}\\
-\frac{[(\alpha_1+\alpha_2-(\beta_1+\beta_2))^2+8\alpha_1\beta_1+4\alpha_1\beta_2+4\alpha_2\beta_1]^2}{2(\alpha_1+\alpha_2+\beta_1+\beta_2)^3}.
\end{multline*}
Other details are omitted.
\end{proof}

\subsection{Non-asymptotic results}
In this section we present explicit formulas for probability
generating functions (see Proposition \ref{prop:pgf}), means and
variances (see Proposition \ref{prop:mean-variance}). In all these
propositions we can check what we said in Remark
\ref{rem:exchange-parameters} about the exchange of
$(\alpha_1,\alpha_2)$ and $(\beta_1,\beta_2)$.

In view of this we present some preliminaries. It is known that
the state probabilities solve the equations
$$\left\{\begin{array}{ll}
\dot{p}_{k,2n}(t)=\beta_1p_{k,2n-1}(t)-(\alpha_1+\alpha_2)p_{k,2n}(t)+\beta_2p_{k,2n+1}(t)\\
\dot{p}_{k,2n+1}(t)=\alpha_1p_{k,2n}(t)-(\beta_1+\beta_2)p_{k,2n+1}(t)+\alpha_2p_{k,2n+2}(t)\\
p_{k,n}(0)=1_{\{k=n\}}
\end{array}\right.\ (\mbox{for}\ k\in\mathbb{Z}).$$
So, if we consider the decomposition
\begin{equation}\label{eq:pgf-decomposition}
F_k=G_k+H_k,
\end{equation}
where $G_k$ and $H_k$ are the generating functions defined by
$$G_k(z,t):=\sum_{j=-\infty}^\infty z^{2j}p_{k,2j}(t)\ \mbox{and}\ H_k(z,t):=\sum_{j=-\infty}^\infty
z^{2j+1}p_{k,2j+1}(t)=\sum_{j=-\infty}^\infty
z^{2j-1}p_{k,2j-1}(t),$$ we have
\begin{equation}\label{eq:pgf-equations}
\left\{\begin{array}{ll}
\frac{\partial G_k(z,t)}{\partial t}=z\beta_1H_k(z,t)-(\alpha_1+\alpha_2)G_k(z,t)+\frac{\beta_2}{z}H_k(z,t)\\
\frac{\partial H_k(z,t)}{\partial t}=z\alpha_1G_k(z,t)-(\beta_1+\beta_2)H_k(z,t)+\frac{\alpha_2}{z}G_k(z,t)\\
G_k(z,0)=z^k\cdot 1_{\{k\ \mathrm{is\ even}\}},\ H_k(z,0)=z^k\cdot
1_{\{k\ \mathrm{is\ odd}\}}
\end{array}\right.\ (\mbox{for}\ k\in\mathbb{Z}).
\end{equation}
We remark that, if we consider the matrix
\begin{equation}\label{eq:def-matrix-A}
A:=\left(\begin{array}{cc}
-(\alpha_1+\alpha_2)&z\beta_1+\frac{\beta_2}{z}\\
z\alpha_1+\frac{\alpha_2}{z}&-(\beta_1+\beta_2)
\end{array}\right),
\end{equation}
the equations \eqref{eq:pgf-equations} can be rewritten as
$$\left\{\begin{array}{ll}
\frac{\partial}{\partial t}\left(\begin{array}{c}
G_k(z,t)\\
H_k(z,t)
\end{array}\right)=A\left(\begin{array}{c}
G_k(z,t)\\
H_k(z,t)
\end{array}\right)\\
G_k(z,0)=z^k\cdot 1_{\{k\ \mathrm{is\ even}\}},\ H_k(z,0)=z^k\cdot
1_{\{k\ \mathrm{is\ odd}\}}
\end{array}\right.\ (\mbox{for}\ k\in\mathbb{Z}).$$
Thus
\begin{equation}\label{eq:pgf-matrices-for-decomposition}
\left(\begin{array}{c}
G_{2k}(z,t)\\
H_{2k}(z,t)
\end{array}\right)=e^{At}\left(\begin{array}{c}
z^{2k}\\
0
\end{array}\right)\ \mbox{and}\ \left(\begin{array}{c}
G_{2k+1}(z,t)\\
H_{2k+1}(z,t)
\end{array}\right)=e^{At}\left(\begin{array}{c}
0\\
z^{2k+1}\\
\end{array}\right).
\end{equation}

We start with the probability generating functions.

\begin{proposition}\label{prop:pgf}
For $z>0$ we have
$$F_k(z,t)=z^ke^{-\frac{\alpha_1+\alpha_2+\beta_1+\beta_2}{2}t}\left(\cosh\left(\frac{th(z)}{z}\right)+
\frac{c_k(z)}{h(z)}\sinh\left(\frac{th(z)}{z}\right)\right),$$
where
\begin{equation}\label{eq:coefficients}
c_k(z):=\left\{\begin{array}{ll}
\frac{(\beta_1+\beta_2-(\alpha_1+\alpha_2))z}{2}+\alpha_1z^2+\alpha_2&\ \mbox{if}\ k\ \mbox{is even}\\
\frac{(\alpha_1+\alpha_2-(\beta_1+\beta_2))z}{2}+\beta_1z^2+\beta_2&\
\mbox{if}\ k\ \mbox{is odd}.
\end{array}\right.
\end{equation}
\end{proposition}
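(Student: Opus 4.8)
The plan is to solve the linear $2\times 2$ system \eqref{eq:pgf-equations} explicitly via the matrix exponential $e^{At}$ that already appears in \eqref{eq:pgf-matrices-for-decomposition}. I would write $A=\frac{\mathrm{tr}\,A}{2}\,I+B$ with $B:=A-\frac{\mathrm{tr}\,A}{2}\,I$ traceless; then $\det B=-\delta^2$ where $\delta^2:=\bigl(\tfrac{\mathrm{tr}\,A}{2}\bigr)^2-\det A$, so by the Cayley--Hamilton theorem $B^2=\delta^2 I$, and therefore
$$e^{At}=e^{(\mathrm{tr}\,A/2)t}\bigl(\cosh(\delta t)\,I+\delta^{-1}\sinh(\delta t)\,B\bigr),$$
an identity that is even in $\delta$, so the choice of square root is immaterial.

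It then remains to identify the two scalars in terms of the data. From \eqref{eq:def-matrix-A} one reads off $\mathrm{tr}\,A=-(\alpha_1+\alpha_2+\beta_1+\beta_2)$, which produces the exponential prefactor of the statement. For the discriminant I would use $\bigl(z\beta_1+\tfrac{\beta_2}{z}\bigr)\bigl(z\alpha_1+\tfrac{\alpha_2}{z}\bigr)=z^{-2}(\beta_1 z^2+\beta_2)(\alpha_1 z^2+\alpha_2)$ together with the identity $(\alpha_1+\alpha_2+\beta_1+\beta_2)^2-4(\alpha_1+\alpha_2)(\beta_1+\beta_2)=(\alpha_1+\alpha_2-(\beta_1+\beta_2))^2$, obtaining
$$\delta^2=\frac{(\alpha_1+\alpha_2-(\beta_1+\beta_2))^2}{4}+\frac{(\beta_1 z^2+\beta_2)(\alpha_1 z^2+\alpha_2)}{z^2}=\frac{\tilde h(z;\alpha_1,\alpha_2,\beta_1,\beta_2)}{4z^2}=\frac{h(z)^2}{z^2}$$
by the definitions in \eqref{eq:hz}. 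Since $z>0$ and $\tilde h(z;\alpha_1,\alpha_2,\beta_1,\beta_2)>0$ (its terms are nonnegative, the last strictly positive), I may take $\delta=h(z)/z>0$, so that $\delta t=th(z)/z$ and $\delta^{-1}=z/h(z)$.

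Finally I would compute $F_k=G_k+H_k=(1\ 1)\,e^{At}v_k$, where $v_k=(z^k,0)^{\top}$ for $k$ even and $v_k=(0,z^k)^{\top}$ for $k$ odd, as dictated by \eqref{eq:pgf-matrices-for-decomposition}. The $\cosh$ part contributes $z^k\cosh(th(z)/z)$ since $(1\ 1)v_k=z^k$ in both parities, while the $\sinh$ part contributes $\frac{z}{h(z)}\sinh(th(z)/z)\,(1\ 1)Bv_k$. A direct evaluation gives $(1\ 1)Bv_k=\bigl(\tfrac{(\beta_1+\beta_2)-(\alpha_1+\alpha_2)}{2}+z\alpha_1+\tfrac{\alpha_2}{z}\bigr)z^k$ for $k$ even and $(1\ 1)Bv_k=\bigl(\tfrac{(\alpha_1+\alpha_2)-(\beta_1+\beta_2)}{2}+z\beta_1+\tfrac{\beta_2}{z}\bigr)z^k$ for $k$ odd; multiplying the parenthesis by the remaining factor $z$ coming from $\delta^{-1}$ turns it precisely into $c_k(z)$ as in \eqref{eq:coefficients}, and the claimed formula for $F_k(z,t)$ follows. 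The exchange symmetry of Remark \ref{rem:exchange-parameters} is transparent here: the odd case is obtained from the even one by swapping $(\alpha_1,\alpha_2)\leftrightarrow(\beta_1,\beta_2)$.

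The computations involved are all elementary linear algebra; the one step deserving care is matching the abstract discriminant $\delta^2$ with $h(z)^2/z^2$ and justifying the positive square root, which is exactly where the hypothesis $z>0$ is used. One could alternatively verify by direct substitution that the stated $F_k$ solves \eqref{eq:pgf-equations} with the correct initial value $F_k(z,0)=z^k$, but the matrix-exponential route makes the origin of the coefficients $c_k(z)$ clearer.
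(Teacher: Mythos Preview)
Your argument is correct. Both you and the paper compute the matrix exponential $e^{At}$ and then read off $F_k=(1\ 1)e^{At}v_k$, but the routes to $e^{At}$ differ. The paper diagonalizes $A$ explicitly: it writes down the eigenvalues $\hat h_\pm(z)$ in \eqref{eq:eigenvalues}, constructs an eigenvector matrix $S$ and its inverse $S^{-1}$, and then multiplies out $S\,\mathrm{diag}(e^{\hat h_-(z)t},e^{\hat h_+(z)t})\,S^{-1}$ to obtain all four entries $u_{ij}(z,t)$ of $e^{At}$ one by one before summing. Your traceless decomposition $A=\tfrac{\mathrm{tr}\,A}{2}I+B$ together with Cayley--Hamilton ($B^2=\delta^2 I$) bypasses the eigenvector computation entirely and delivers $e^{At}$ directly in the $\cosh/\sinh$ form the statement requires; the identification of $c_k(z)$ then drops out from a single evaluation of $(1\ 1)Bv_k$. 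Your approach is shorter and makes the structure of the answer more transparent; the paper's diagonalization, while lengthier, has the side benefit of exhibiting the eigenvalues $\hat h_\pm(z)$ explicitly, which are reused later in Propositions~\ref{prop:pgf-fractional} and~\ref{prop:pgf-TSS}.
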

\begin{proof}
The main part of the proof consists of the computation of the
exponential matrix $e^{At}$, where $A$ is the matrix in
\eqref{eq:def-matrix-A}, and finally we easily conclude by taking
into account \eqref{eq:pgf-decomposition} and
\eqref{eq:pgf-matrices-for-decomposition}.

The eigenvalues of $A$ are
\begin{equation}\label{eq:eigenvalues}
\hat{h}_\pm(z):=-\frac{\alpha_1+\alpha_2+\beta_1+\beta_2}{2}\pm\frac{h(z)}{z}
\end{equation}
(where $h$ is defined by \eqref{eq:hz}), and it is known that we
can find a matrix $S$ such that
$$S\left(\begin{array}{cc}
-\frac{\alpha_1+\alpha_2+\beta_1+\beta_2}{2}-\frac{h(z)}{z}&0\\
0&-\frac{\alpha_1+\alpha_2+\beta_1+\beta_2}{2}+\frac{h(z)}{z}
\end{array}\right)S^{-1}=A;$$
in particular we can consider the matrix
$$S:=\left(\begin{array}{cc}
\frac{\beta_1+\beta_2-(\alpha_1+\alpha_2)}{2}-\frac{h(z)}{z}&\frac{\beta_1+\beta_2-(\alpha_1+\alpha_2)}{2}+\frac{h(z)}{z}\\
z\alpha_1+\frac{\alpha_2}{z}&z\alpha_1+\frac{\alpha_2}{z}
\end{array}\right)$$
and its inverse is
$$S^{-1}=-\frac{z}{2h(z)}\left(\begin{array}{cc}
1&\frac{-z[\beta_1+\beta_2-(\alpha_1+\alpha_2)]-2h(z)}{2(\alpha_1z^2+\alpha_2)}\\
-1&\frac{z[\beta_1+\beta_2-(\alpha_1+\alpha_2)]-2h(z)}{2(\alpha_1z^2+\alpha_2)}.
\end{array}\right).$$
Then the desired exponential matrix is
\begin{multline*}
e^{At}=S\left(\begin{array}{cc}
e^{(-\frac{\alpha_1+\alpha_2+\beta_1+\beta_2}{2}-\frac{h(z)}{z})t}&0\\
0&e^{(-\frac{\alpha_1+\alpha_2+\beta_1+\beta_2}{2}+\frac{h(z)}{z})t}
\end{array}\right)S^{-1}\\
=-\frac{z}{2h(z)}S\left(\begin{array}{cc}
e^{\hat{h}_-(z)t}&e^{\hat{h}_-(z)t}\cdot\frac{-z[\beta_1+\beta_2-(\alpha_1+\alpha_2)]-2h(z)}{2(\alpha_1z^2+\alpha_2)}\\
-e^{\hat{h}_+(z)t}&e^{\hat{h}_+(z)t}\cdot\frac{z[\beta_1+\beta_2-(\alpha_1+\alpha_2)]-2h(z)}{2(\alpha_1^2z+\alpha_2)}
\end{array}\right);
\end{multline*}
moreover, after some computations, we have
$$e^{At}=\left(\begin{array}{cc}
u_{11}(z,t)&u_{12}(z,t)\\
u_{21}(z,t)&u_{22}(z,t)
\end{array}\right),$$
where
\begin{multline*}
u_{11}(z,t):=-\frac{z}{2h(z)}\left(e^{\hat{h}_-(z)t}\left(\frac{\beta_1+\beta_2-(\alpha_1+\alpha_2)}{2}-\frac{h(z)}{z}\right)
-e^{\hat{h}_+(z)t}\left(\frac{\beta_1+\beta_2-(\alpha_1+\alpha_2)}{2}+\frac{h(z)}{z}\right)\right)\\
=\frac{(\beta_1+\beta_2-(\alpha_1+\alpha_2))z}{2h(z)}\cdot\frac{e^{\hat{h}_+(z)t}-e^{\hat{h}_-(z)t}}{2}+\frac{e^{\hat{h}_-(z)t}+e^{\hat{h}_+(z)t}}{2},
\end{multline*}
$$u_{21}(z,t):=-\frac{z}{2h(z)}\left(z\alpha_1+\frac{\alpha_2}{z}\right)(e^{\hat{h}_-(z)t}-e^{\hat{h}_+(z)t})
=\frac{\alpha_1z^2+\alpha_2}{h(z)}\cdot\frac{e^{\hat{h}_+(z)t}-e^{\hat{h}_-(z)t}}{2},$$
\begin{multline*}
u_{12}(z,t):=-\frac{z}{2h(z)}\left(\left(\frac{\beta_1+\beta_2-(\alpha_1+\alpha_2)}{2}-\frac{h(z)}{z}\right)
e^{\hat{h}_-(z)t}\cdot\frac{-z[\beta_1+\beta_2-(\alpha_1+\alpha_2)]-2h(z)}{2(\alpha_1z^2+\alpha_2)}\right.\\
\left.+\left(\frac{\beta_1+\beta_2-(\alpha_1+\alpha_2)}{2}+\frac{h(z)}{z}\right)
e^{\hat{h}_+(z)t}\cdot\frac{z[\beta_1+\beta_2-(\alpha_1+\alpha_2)]-2h(z)}{2(\alpha_1z^2+\alpha_2)}\right)\\
=-\frac{z}{2h(z)}\cdot\frac{z}{\alpha_1z^2+\alpha_2}\left(e^{\hat{h}_-(z)t}-e^{\hat{h}_+(z)t}\right)
\left(\frac{h^2(z)}{z^2}-\frac{(\beta_1+\beta_2-(\alpha_1+\alpha_2))^2}{4}\right)\\
=-\frac{1}{h(z)(\alpha_1z^2+\alpha_2)}\left(h^2(z)-\frac{(\beta_1+\beta_2-(\alpha_1+\alpha_2))^2z^2}{4}\right)
\frac{e^{\hat{h}_-(z)t}-e^{\hat{h}_+(z)t}}{2}\\
=\frac{\beta_1z^2+\beta_2}{h(z)}\cdot\frac{e^{\hat{h}_+(z)t}-e^{\hat{h}_-(z)t}}{2}
\end{multline*}
and
\begin{multline*}
u_{22}(z,t):=-\frac{z}{2h(z)}\left(z\alpha_1+\frac{\alpha_2}{z}\right)\\
\left(e^{\hat{h}_-(z)t}\cdot\frac{-z[\beta_1+\beta_2-(\alpha_1+\alpha_2)]-2h(z)}{2(\alpha_1z^2+\alpha_2)}
+e^{\hat{h}_+(z)t}\cdot\frac{z[\beta_1+\beta_2-(\alpha_1+\alpha_2)]-2h(z)}{2(\alpha_1z^2+\alpha_2)}\right)\\
=\frac{e^{\hat{h}_-(z)t}}{2}\left(\frac{(\beta_1+\beta_2-(\alpha_1+\alpha_2))z}{2h(z)}+1\right)
+\frac{e^{\hat{h}_+(z)t}}{2}\left(-\frac{(\beta_1+\beta_2-(\alpha_1+\alpha_2))z}{2h(z)}+1\right)\\
=\frac{(\beta_1+\beta_2-(\alpha_1+\alpha_2))z}{2h(z)}\cdot\frac{e^{\hat{h}_-(z)t}-e^{\hat{h}_+(z)t}}{2}+\frac{e^{\hat{h}_-(z)t}+e^{\hat{h}_+(z)t}}{2}.
\end{multline*}

We complete the proof noting that, by \eqref{eq:pgf-decomposition}
and \eqref{eq:pgf-matrices-for-decomposition}, we have
$$F_{2k}(z,t)=z^{2k}(u_{11}(z,t)+u_{21}(z,t))$$
and
$$F_{2k+1}(z,t)=z^{2k+1}(u_{12}(z,t)+u_{22}(z,t));$$
in fact these equalities yield
\begin{multline*}
F_{2k}(z,t)=z^{2k}\left(\frac{e^{\hat{h}_-(z)t}+e^{\hat{h}_+(z)t}}{2}
+\frac{1}{h(z)}\left(\frac{\beta_1+\beta_2-(\alpha_1+\alpha_2)}{2}z+\alpha_1z^2+\alpha_2\right)\frac{e^{\hat{h}_+(z)t}-e^{\hat{h}_-(z)t}}{2}\right)\\
=z^{2k}e^{-\frac{\alpha_1+\alpha_2+\beta_1+\beta_2}{2}t}\left(\cosh\left(\frac{th(z)}{z}\right)
+\frac{c_{2k}(z)}{h(z)}\sinh\left(\frac{th(z)}{z}\right)\right)
\end{multline*}
and
\begin{multline*}
F_{2k+1}(z,t)=z^{2k+1}\left(\frac{e^{\hat{h}_-(z)t}+e^{\hat{h}_+(z)t}}{2}
+\frac{1}{h(z)}\left(\frac{\alpha_1+\alpha_2-(\beta_1+\beta_2)}{2}z+\beta_1z^2+\beta_2\right)\frac{e^{\hat{h}_+(z)t}-e^{\hat{h}_-(z)t}}{2}\right)\\
=z^{2k+1}e^{-\frac{\alpha_1+\alpha_2+\beta_1+\beta_2}{2}t}\left(\cosh\left(\frac{th(z)}{z}\right)
+\frac{c_{2k+1}(z)}{h(z)}\sinh\left(\frac{th(z)}{z}\right)\right).
\end{multline*}
\end{proof}

In the next proposition we give mean and variance; in particular
we refer to $\Lambda^\prime(0)$ and $\Lambda^{\prime\prime}(0)$
given in Lemma \ref{lem:2-derivatives-Lambda-origin}.

\begin{proposition}\label{prop:mean-variance}
We have
$$\mathbb{E}[X(t)|X(0)=k]=k+\Lambda^\prime(0)t+\left.\left(\frac{c_k(z)}{h(z)}\right)^\prime\right|_{z=1}
\frac{1-e^{-(\alpha_1+\alpha_2+\beta_1+\beta_2)t}}{2},$$ where
$$\left.\left(\frac{c_k(z)}{h(z)}\right)^\prime\right|_{z=1}=\left\{\begin{array}{ll}
\frac{2(\alpha_1+\alpha_2)(\alpha_1-\alpha_2-\beta_1+\beta_2)}{(\alpha_1+\alpha_2+\beta_1+\beta_2)^2}&\ \mbox{if}\ k\ \mbox{is even}\\
\frac{2(\beta_1+\beta_2)(\beta_1-\beta_2-\alpha_1+\alpha_2)}{(\alpha_1+\alpha_2+\beta_1+\beta_2)^2}&\
\mbox{if}\ k\ \mbox{is odd}.
\end{array}\right.$$
Moreover, if $k$ is even, we have
\begin{multline*}
\mathrm{Var}[X(t)|X(0)=k]=\Lambda^{\prime\prime}(0)t-e^{-2(\alpha_1+\alpha_2+\beta_1+\beta_2)t}
\frac{(\alpha_1+\alpha_2)^2(\alpha_1-\alpha_2-\beta_1+\beta_2)^2}{(\alpha_1+\alpha_2+\beta_1+\beta_2)^4}\\
+\frac{e^{-(\alpha_1+\alpha_2+\beta_1+\beta_2)t}}{(\alpha_1+\alpha_2+\beta_1+\beta_2)^3}
\left\{8t(\alpha_1+\alpha_2)(\alpha_1-\alpha_2-\beta_1+\beta_2)(\alpha_1\beta_1-\alpha_2\beta_2)\right.\\
+(\alpha_1+\alpha_2)(\alpha_1-\alpha_2-\beta_1+\beta_2)(\alpha_1+\alpha_2-\beta_1-\beta_2)
-6(\alpha_2-\beta_1)(\alpha_1-\alpha_2-\beta_1+\beta_2)(\alpha_1+\alpha_2-\beta_1-\beta_2)\\
-2(7\alpha_2+\beta_1-2\beta_2)(\beta_1+\beta_2)(\alpha_1+\alpha_2-\beta_1-\beta_2)
-4(\alpha_2-\beta_2)^2(\alpha_1+\alpha_2-\beta_1-\beta_2)+8\alpha_2(\beta_1+\beta_2)(\alpha_1+\alpha_2)\\
\left.-8\alpha_2(\beta_1+\beta_2)(\alpha_1-\alpha_2-\beta_1+\beta_2)+8\beta_1(\beta_1+\beta_2)^2
-\frac{16(\alpha_2+\beta_1)^2(\beta_1+\beta_2)^2}{\alpha_1+\alpha_2+\beta_1+\beta_2}\right\}\\
+\frac{1}{(\alpha_1+\alpha_2+\beta_1+\beta_2)^3}\left\{(-7\alpha_1+3\alpha_2+10\beta_1-4\beta_2)(\beta_1+\beta_2)(\alpha_1-\alpha_2-\beta_1+\beta_2)\right.\\
+4(\alpha_2+\alpha_1)(\alpha_2+2\beta_2)(\alpha_1-\alpha_2-\beta_1+\beta_2)+4(\alpha_2-\beta_2)^2(\alpha_1-\alpha_2-\beta_1+\beta_2)\\
+4(\alpha_2-\beta_2)(\alpha_2+\beta_1)(\beta_1+\beta_2)-10(\alpha_2+\beta_1)(\beta_1+\beta_2)^2\\
\left.+8(\alpha_2+\beta_1)(\alpha_2-\beta_2)^2\right\}
+\frac{20(\alpha_2+\beta_1)^2(\beta_1+\beta_2)^2}{(\alpha_1+\alpha_2+\beta_1+\beta_2)^4}.
\end{multline*}
Finally, if $k$ is odd, $\mathrm{Var}[X(t)|X(0)=k]$ can be
obtained by exchanging $(\alpha_1,\alpha_2)$ and
$(\beta_1,\beta_2)$ in the last expression (we recall that, as
pointed out in Remark \ref{rem:exchange-parameters},
$\Lambda^{\prime\prime}(0)$ does not change).
\end{proposition}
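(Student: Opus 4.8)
Both quantities will be extracted from the probability generating function of Proposition \ref{prop:pgf} by differentiating twice in $z$ at $z=1$; throughout write $a:=\alpha_1+\alpha_2+\beta_1+\beta_2$. Factor $F_k(z,t)=z^kW_k(z,t)$ with
\[
W_k(z,t):=e^{-\frac{a}{2}t}\bigl(\cosh(tq(z))+r_k(z)\sinh(tq(z))\bigr),\qquad q(z):=\tfrac{h(z)}{z},\quad r_k(z):=\tfrac{c_k(z)}{h(z)}.
\]
Because the number of jumps of $\{X(s):0\le s\le t\}$ is stochastically dominated by a Poisson variable, $X(t)$ has exponentially small tails, so the series $F_k(z,t)=\sum_nz^np_{k,n}(t)$ is smooth near $z=1$ and may be differentiated termwise there; hence $\mathbb{E}[X(t)|X(0)=k]=\partial_zF_k(1,t)$, $\partial_z^2F_k(1,t)=\mathbb{E}[X(t)(X(t)-1)|X(0)=k]$, and so $\mathrm{Var}[X(t)|X(0)=k]=\partial_z^2F_k(1,t)+\partial_zF_k(1,t)-(\partial_zF_k(1,t))^2$. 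The proof of Lemma \ref{lem:2-derivatives-Lambda-origin} already supplies $h(1)=\tfrac a2$ (hence $q(1)=\tfrac a2$), $h'(1)$ and $h''(1)$; from \eqref{eq:coefficients} one checks at once that $c_k(1)=\tfrac a2=h(1)$, so $r_k(1)=1$ and therefore $W_k(1,t)=e^{-\frac a2t}e^{\frac a2t}=1$, as it must be. Finally, from $q(e^\gamma)=\Lambda(\gamma)+\tfrac a2$ and the chain rule one reads off $q'(1)=\Lambda'(0)=h'(1)-h(1)$ and $q''(1)=\Lambda''(0)-\Lambda'(0)$, which are exactly the identities behind Lemma \ref{lem:2-derivatives-Lambda-origin}.

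\textbf{Mean.} Differentiating $W_k$ once and evaluating at $z=1$ (where the hyperbolic functions have argument $\tfrac a2t$ and $r_k(1)=1$) gives $\partial_zW_k(1,t)=\Lambda'(0)t+r_k'(1)e^{-\frac a2t}\sinh(\tfrac a2t)=\Lambda'(0)t+r_k'(1)\tfrac{1-e^{-at}}{2}$; since $\partial_zF_k(1,t)=k\,W_k(1,t)+\partial_zW_k(1,t)=k+\partial_zW_k(1,t)$ this is precisely the claimed mean, with $r_k'(1)=(c_k(z)/h(z))'|_{z=1}$. There remains the elementary evaluation of $(c_k/h)'(1)=(c_k'(1)h(1)-c_k(1)h'(1))/h(1)^2$ using $c_k(1)=h(1)$, the known $h'(1)$, and $c_k'(1)$ read off \eqref{eq:coefficients}; for $k$ even this gives $\tfrac{2(\alpha_1+\alpha_2)(\alpha_1-\alpha_2-\beta_1+\beta_2)}{a^2}$, and the $k$ odd case follows from the exchange of $(\alpha_1,\alpha_2)$ and $(\beta_1,\beta_2)$ of Remark \ref{rem:exchange-parameters}.

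\textbf{Variance.} Differentiating $W_k$ twice at $z=1$ and using $r_k(1)=1$, $q'(1)=\Lambda'(0)$, $q''(1)=\Lambda''(0)-\Lambda'(0)$, $e^{-\frac a2t}\cosh(\tfrac a2t)=\tfrac{1+e^{-at}}{2}$ and $e^{-\frac a2t}\sinh(\tfrac a2t)=\tfrac{1-e^{-at}}{2}$ yields
\[
\partial_z^2W_k(1,t)=t^2\Lambda'(0)^2+t(\Lambda''(0)-\Lambda'(0))+t\Lambda'(0)r_k'(1)(1+e^{-at})+r_k''(1)\tfrac{1-e^{-at}}{2}.
\]
Inserting $\partial_z^2W_k(1,t)$ and $\partial_zW_k(1,t)$ into $\mathrm{Var}=\partial_z^2F_k(1,t)+\partial_zF_k(1,t)-(\partial_zF_k(1,t))^2$ (the $k$-dependence cancels, since $W_k(1,t)=1$), the $t^2$-terms cancel, the coefficient of $t$ collapses to $\Lambda''(0)$, and the remainder is an affine combination of $1$, $e^{-at}$ and $e^{-2at}$ with coefficients polynomial in $\Lambda'(0)$, $r_k'(1)$ and $r_k''(1)$; in particular the coefficient of $e^{-2at}$ is $-\tfrac14 r_k'(1)^2$. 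It then suffices, for $k$ even, to substitute the already-computed $r_k'(1)$ and the value $r_k''(1)=(c_k/h)''(1)$ — obtained by the quotient rule from $c_k(1),c_k'(1),c_k''(1)$ (from \eqref{eq:coefficients}) and $h(1),h'(1),h''(1)$ (from Lemma \ref{lem:2-derivatives-Lambda-origin}) — and to simplify; this produces the displayed expression. The $k$ odd case follows once more from Remark \ref{rem:exchange-parameters}, $\Lambda''(0)$ being invariant under the exchange.

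\textbf{Expected main obstacle.} The conceptual step is the reduction above; the genuinely laborious part is the final simplification of the variance, which needs $h''(1)$ and a complete second-order expansion of $c_k/h$ at $z=1$, followed by a long but entirely elementary polynomial manipulation to arrange the coefficients of $1$, $e^{-at}$ and $e^{-2at}$ into the precise grouping displayed in the statement. This is purely mechanical and can be double-checked with a computer algebra system, so — as in the proof of Lemma \ref{lem:2-derivatives-Lambda-origin} — only the decisive intermediate quantities would be recorded.
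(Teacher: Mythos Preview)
Your approach is correct and is essentially the same as the paper's: compute $\partial_zF_k(1,t)$ and $\partial_z^2F_k(1,t)$ from the explicit $F_k(z,t)$ of Proposition~\ref{prop:pgf}, then apply the standard formulas for mean and variance in terms of these derivatives. The paper simply states this strategy and omits all computations as ``cumbersome'', so your write-up actually supplies considerably more detail (the factorization $F_k=z^kW_k$, the identifications $q'(1)=\Lambda'(0)$, $q''(1)=\Lambda''(0)-\Lambda'(0)$, the verification of the $e^{-2at}$ and $te^{-at}$ coefficients, etc.) than the published proof.
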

\begin{proof}
The desired expressions of means and variance can be obtained with
suitable (well-known) formulas in terms of
$\left.\frac{dF_k(z,t)}{dz}\right|_{z=1}$ and
$\left.\frac{d^2F_k(z,t)}{dz^2}\right|_{z=1}$; these two values
can be computed by considering the explicit formulas of $F_k(z,t)$
in Proposition \ref{prop:pgf}. The computations are cumbersome and
we omit the details.
\end{proof}

\subsection{Asymptotic results}
Here we present Propositions \ref{prop:LD} and \ref{prop:MD},
which are the generalization of Propositions 3.1 and 3.2 in
\cite{DicrescenzoMacciMartinucci}. In both cases we apply the
G\"{artner} Ellis Theorem, and we use the probability generating
function in Proposition \ref{prop:pgf}. Actually the proof of
Proposition \ref{prop:MD} here is slightly different from the
proof of Proposition 3.2 in \cite{DicrescenzoMacciMartinucci}.

\begin{proposition}\label{prop:LD}
For all $k\in\mathbb{Z}$,
$\left\{P\left(\frac{X(t)}{t}\in\cdot\Big|X(0)=k\right):t>0\right\}$
satisfies the LDP with speed function $v_t=t$ and good rate
function $\Lambda^*(y):=\sup_{\gamma\in\mathbb{R}}\{\gamma
y-\Lambda(\gamma)\}$.
\end{proposition}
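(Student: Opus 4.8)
The plan is to apply the G\"{a}rtner--Ellis Theorem (Theorem 2.3.6 in \cite{DemboZeitouni}) to the family $\{P(X(t)/t\in\cdot\,|X(0)=k):t>0\}$ with speed $v_t=t$. The first step is to identify the limiting scaled cumulant generating function
$$\psi(\gamma):=\lim_{t\to+\infty}\frac{1}{t}\log\mathbb{E}\left[e^{\gamma X(t)}\Big|X(0)=k\right]
=\lim_{t\to+\infty}\frac{1}{t}\log F_k(e^\gamma,t),$$
and to show that it equals $\Lambda(\gamma)$ for every $\gamma\in\mathbb{R}$. Using the explicit formula from Proposition \ref{prop:pgf} with $z=e^\gamma>0$, we have
$$F_k(e^\gamma,t)=e^{k\gamma}e^{-\frac{\alpha_1+\alpha_2+\beta_1+\beta_2}{2}t}\left(\cosh\left(\frac{th(e^\gamma)}{e^\gamma}\right)+\frac{c_k(e^\gamma)}{h(e^\gamma)}\sinh\left(\frac{th(e^\gamma)}{e^\gamma}\right)\right).$$
Since $h(z)>0$ for $z>0$ (the quantity under the square root in \eqref{eq:hz} is strictly positive, being a sum of a nonnegative term and $4(\beta_1z^2+\beta_2)(\alpha_1z^2+\alpha_2)>0$), the term in parentheses grows like a constant times $e^{th(e^\gamma)/e^\gamma}$ as $t\to\infty$: more precisely it lies between $\tfrac12 e^{th(e^\gamma)/e^\gamma}$ and $C_k(\gamma)e^{th(e^\gamma)/e^\gamma}$ for large $t$, where $C_k(\gamma)$ is a positive constant depending only on $\gamma$ and $k$. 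Taking logarithms, dividing by $t$, and letting $t\to\infty$, the prefactor $e^{k\gamma}$ and the bounded coefficient $c_k(e^\gamma)/h(e^\gamma)$ contribute nothing, and we obtain
$$\psi(\gamma)=-\frac{\alpha_1+\alpha_2+\beta_1+\beta_2}{2}+\frac{h(e^\gamma)}{e^\gamma}=\Lambda(\gamma),$$
by \eqref{eq:def-Lambda}. This limit is finite for every real $\gamma$.

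The second step is to verify the hypotheses of the G\"{a}rtner--Ellis Theorem that guarantee a good rate function. Since $\Lambda(\gamma)$ is finite on all of $\mathbb{R}$, the origin is in the interior of its effective domain. Next I would check that $\Lambda$ is essentially smooth and lower semicontinuous: $h$ is real-analytic and strictly positive on $(0,\infty)$, so $\gamma\mapsto h(e^\gamma)/e^\gamma$ is $C^\infty$ (indeed analytic) on $\mathbb{R}$, hence so is $\Lambda$; in particular $\Lambda$ is differentiable everywhere and steep (the steepness condition at the boundary of the domain is vacuous since the domain is all of $\mathbb{R}$). Lower semicontinuity is immediate from continuity. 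Convexity of $\Lambda$ follows from the G\"{a}rtner--Ellis framework (it is automatic that a limit of normalized logarithmic moment generating functions is convex), or can be checked directly; in any case the Legendre--Fenchel transform $\Lambda^*(y)=\sup_{\gamma\in\mathbb{R}}\{\gamma y-\Lambda(\gamma)\}$ is a good convex rate function because $\Lambda$ is finite and differentiable everywhere (so $\Lambda^*$ has compact level sets). Invoking Theorem 2.3.6 of \cite{DemboZeitouni} then yields the LDP with speed $v_t=t$ and good rate function $\Lambda^*$, which is the assertion.

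The main obstacle, though a mild one, is the precise asymptotic control of the bracketed $\cosh/\sinh$ expression in $F_k(e^\gamma,t)$: one must be careful that the coefficient $c_k(e^\gamma)/h(e^\gamma)$, while it may be negative or may exceed $1$ in absolute value, does not cause cancellation that changes the exponential rate. This is handled by noting $\cosh x + c\sinh x = \tfrac12(1+c)e^x+\tfrac12(1-c)e^{-x}$; as $x=th(e^\gamma)/e^\gamma\to+\infty$ the dominant behavior is $\tfrac12(1+c)e^x$ when $c\neq -1$, and even in the degenerate case $c=-1$ the surviving term $\tfrac12(1-c)e^{-x}=e^{-x}$ still gives, after taking $\tfrac1t\log$, the limit $-h(e^\gamma)/e^\gamma$, which combined with $-(\alpha_1+\alpha_2+\beta_1+\beta_2)/2$ would \emph{not} match $\Lambda(\gamma)$; however, one checks that $c_k(e^\gamma)=-h(e^\gamma)$ is impossible for $z=e^\gamma>0$ because $c_k(z)+h(z)>0$ there (this can be read off from the identity $h^2(z)-\bigl(\tfrac{(\beta_1+\beta_2-(\alpha_1+\alpha_2))z}{2}\bigr)^2=(\alpha_1z^2+\alpha_2)(\beta_1z^2+\beta_2)>0$ used in the proof of Proposition \ref{prop:pgf}, together with positivity of the linear-plus-quadratic part of $c_k$). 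Hence $1+c_k(e^\gamma)/h(e^\gamma)>0$ strictly, the leading term is genuinely of order $e^x$, and the rate is exactly $\Lambda(\gamma)$ as claimed.
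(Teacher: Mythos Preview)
Your proof is correct and follows essentially the same route the paper takes (the paper simply defers to the analogous Proposition~3.1 in \cite{DicrescenzoMacciMartinucci}, which is precisely the G\"{a}rtner--Ellis argument via the explicit probability generating function of Proposition~\ref{prop:pgf}); you have filled in the details that the paper omits, including the verification that $1+c_k(e^\gamma)/h(e^\gamma)>0$ so that no cancellation spoils the exponential rate. One cosmetic point: your intermediate claim that the bracketed expression lies between $\tfrac12 e^{th(e^\gamma)/e^\gamma}$ and $C_k(\gamma)e^{th(e^\gamma)/e^\gamma}$ is not literally true when $c_k(e^\gamma)/h(e^\gamma)\in(-1,0)$, but your final paragraph gives the correct sandwich via $\tfrac12(1+c)e^x+\tfrac12(1-c)e^{-x}$, so the argument stands.
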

\begin{proof}
We can simply adapt the proof of Proposition 3.1 in
\cite{DicrescenzoMacciMartinucci}. The details are omitted.
\end{proof}

\begin{proposition}\label{prop:MD}
Let $\{a_t:t>0\}$ be such that $a_t\to 0$ and $ta_t\to+\infty$ (as
$t\to+\infty$). Then, for all $k\in\mathbb{Z}$,
$\left\{P\left(\sqrt{ta_t}\frac{X(t)-\mathbb{E}[X(t)|X(0)=k]}{t}\in\cdot\Big|X(0)=k\right):t>0\right\}$
satisfies the LDP with speed function $v_t=\frac{1}{a_t}$ and good
rate function $J(y):=\frac{y^2}{2\Lambda^{\prime\prime}(0)}$.
\end{proposition}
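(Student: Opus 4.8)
The plan is to apply the G\"{a}rtner Ellis Theorem to the family of random variables $Y_t := \sqrt{ta_t}\,\frac{X(t)-\mathbb{E}[X(t)|X(0)=k]}{t}$, with speed $v_t = 1/a_t$. The key object is the limiting scaled cumulant generating function
$$\psi(\gamma) := \lim_{t\to+\infty} a_t \log \mathbb{E}\!\left[e^{\gamma Y_t / a_t}\,\Big|\,X(0)=k\right],$$
and I expect to show $\psi(\gamma) = \frac{\Lambda^{\prime\prime}(0)}{2}\gamma^2$; since this is finite, differentiable and steep on all of $\mathbb{R}$, the G\"{a}rtner Ellis Theorem then yields the LDP with good rate function $\psi^*(y) = \sup_{\gamma}\{\gamma y - \psi(\gamma)\} = \frac{y^2}{2\Lambda^{\prime\prime}(0)}$, which is exactly $J(y)$.

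First I would rewrite the moment generating function in terms of the probability generating function from Proposition~\ref{prop:pgf}. Writing $m_k(t) := \mathbb{E}[X(t)|X(0)=k]$, we have
$$\mathbb{E}\!\left[e^{\gamma Y_t/a_t}\,\Big|\,X(0)=k\right] = e^{-\gamma \sqrt{t a_t}\, m_k(t)/(t a_t)}\, F_k\!\left(e^{\gamma \sqrt{t a_t}/(t a_t)}, t\right) = e^{-\gamma m_k(t)/\sqrt{t a_t}}\, F_k\!\left(e^{\gamma/\sqrt{t a_t}}, t\right).$$
Setting $\gamma_t := \gamma/\sqrt{t a_t} \to 0$ and using the explicit formula for $F_k$, the quantity $a_t \log \mathbb{E}[\cdots]$ splits (after taking logs) into: the term $a_t\, k\, \gamma_t$, which vanishes; the term $-a_t\,\frac{\alpha_1+\alpha_2+\beta_1+\beta_2}{2}\,t$ coming from the exponential prefactor; a contribution from $a_t \log(\cosh(t h(e^{\gamma_t})/e^{\gamma_t}) + \frac{c_k}{h}\sinh(\cdots))$; and the centering term $-a_t\, \gamma_t\, m_k(t)$. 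The bounded oscillating factor $\cosh + \frac{c_k}{h}\sinh$ is, for large $t$, dominated by $\tfrac12 e^{t h(e^{\gamma_t})/e^{\gamma_t}}$, so its log contributes $a_t t\, h(e^{\gamma_t})/e^{\gamma_t} + o(a_t t)$; recalling the definition \eqref{eq:def-Lambda}, combining this with the prefactor term gives $a_t t\,\Lambda(\gamma_t) + o(a_t t)$.

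The heart of the argument is then a second-order Taylor expansion of $\Lambda$ at the origin. Using $m_k(t) = k + \Lambda^\prime(0) t + O(1)$ from Proposition~\ref{prop:mean-variance}, the centering term is $-a_t \gamma_t(\Lambda^\prime(0) t + O(1)) = -a_t t\, \gamma_t \Lambda^\prime(0) + O(a_t \gamma_t)$, and $O(a_t\gamma_t)\to 0$. Hence
$$a_t \log \mathbb{E}\!\left[e^{\gamma Y_t/a_t}\,\Big|\,X(0)=k\right] = a_t t\big(\Lambda(\gamma_t) - \gamma_t \Lambda^\prime(0)\big) + o(1).$$
Now $\Lambda(0) = 0$ (directly from \eqref{eq:def-Lambda} and $h(1) = \frac{\alpha_1+\alpha_2+\beta_1+\beta_2}{2}$, computed in Lemma~\ref{lem:2-derivatives-Lambda-origin}), so $\Lambda(\gamma_t) - \gamma_t\Lambda^\prime(0) = \frac{\Lambda^{\prime\prime}(0)}{2}\gamma_t^2 + o(\gamma_t^2)$ by Taylor's theorem. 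Since $a_t t\, \gamma_t^2 = a_t t\cdot \frac{\gamma^2}{t a_t} = \gamma^2$, we obtain $a_t t\big(\Lambda(\gamma_t)-\gamma_t\Lambda^\prime(0)\big) \to \frac{\Lambda^{\prime\prime}(0)}{2}\gamma^2$, and the $o(\gamma_t^2)$ remainder times $a_t t$ is $\gamma^2 \cdot o(1) \to 0$. This establishes $\psi(\gamma) = \frac{\Lambda^{\prime\prime}(0)}{2}\gamma^2$.

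The main obstacle — and the reason this is "slightly different" from the proof of Proposition~3.2 in \cite{DicrescenzoMacciMartinucci} — is controlling the contribution of the bounded hyperbolic factor rigorously and uniformly: one must argue that $\log(\cosh(x_t) + r_t \sinh(x_t)) = x_t + O(1)$ with $x_t = t h(e^{\gamma_t})/e^{\gamma_t} \to +\infty$ and $r_t = c_k(e^{\gamma_t})/h(e^{\gamma_t}) \to c_k(1)/h(1) \in (0,1]$ bounded away from the values that would cause cancellation, and then verify the $O(1)$ error is killed after multiplication by $a_t \to 0$. Once that is in place, checking the hypotheses of the G\"{a}rtner Ellis Theorem is immediate ($\psi$ is finite everywhere, differentiable, hence essentially smooth and lower semicontinuous), the Legendre transform $\psi^*(y) = \frac{y^2}{2\Lambda^{\prime\prime}(0)} = J(y)$ is a standard computation using $\Lambda^{\prime\prime}(0) > 0$ from Lemma~\ref{lem:2-derivatives-Lambda-origin}, and $J$ is a good rate function since its level sets are compact intervals. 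Finally, since $J$ uniquely vanishes at $y = 0$, the LDP forces $Y_t \to 0$ in probability, confirming the moderate-deviation interpretation.
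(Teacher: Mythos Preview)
Your proposal is correct and follows essentially the same route as the paper's proof: both apply the G\"{a}rtner Ellis Theorem by reducing the scaled log-MGF to $ta_t\bigl(\Lambda(\gamma/\sqrt{ta_t})-\frac{\gamma}{\sqrt{ta_t}}\Lambda^\prime(0)\bigr)$ via Propositions~\ref{prop:pgf} and~\ref{prop:mean-variance}, and then invoke the second-order Taylor expansion of $\Lambda$ at the origin. You are in fact a bit more explicit than the paper about the $\cosh+\frac{c_k}{h}\sinh$ factor (whose log contributes only $O(1)$, annihilated by $a_t$), which the paper silently absorbs; note for completeness that $c_k(1)/h(1)=1$ exactly, so $r_t\to 1$ and no cancellation issue arises.
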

\begin{proof}
We apply the G\"{a}rtner Ellis Theorem. More precisely we show
that
\begin{equation}\label{eq:GE-limit-MD-non-fractional}
\lim_{t\to\infty}a_t\log\mathbb{E}\left[\exp\left(\frac{\gamma}{a_t}\sqrt{ta_t}\frac{X(t)-\mathbb{E}[X(t)|X(0)=k]}{t}\right)\Big|X(0)=k\right]=
\frac{\gamma^2}{2}\Lambda^{\prime\prime}(0)\ (\mbox{for all}\
\gamma\in\mathbb{R});
\end{equation}
in fact we can easily check that
$J(y)=\sup_{\gamma\in\mathbb{R}}\left\{\gamma
y-\frac{\gamma^2}{2}\Lambda^{\prime\prime}(0)\right\}$ (for all
$y\in\mathbb{R}$).

We remark that
\begin{multline*}
a_t\log\mathbb{E}\left[\exp\left(\frac{\gamma}{a_t}\sqrt{ta_t}\frac{X(t)-\mathbb{E}[X(t)|X(0)=k]}{t}\right)\Big|X(0)=k\right]\\
=a_t\left(\log\mathbb{E}\left[\exp\left(\frac{\gamma}{\sqrt{ta_t}}X(t)\right)\Big|X(0)=k\right]
-\frac{\gamma}{\sqrt{ta_t}}\mathbb{E}[X(t)|X(0)=k]\right).
\end{multline*}
As far as the right hand side is concerned, we take into account
Proposition \ref{prop:pgf} for the moment generating function and
Proposition \ref{prop:mean-variance} for the mean; then we get
\begin{multline*}
\lim_{t\to\infty}a_t\log\mathbb{E}\left[\exp\left(\frac{\gamma}{a_t}\sqrt{ta_t}\frac{X(t)-\mathbb{E}[X(t)|X(0)=k]}{t}\right)\Big|X(0)=k\right]\\
=\lim_{t\to\infty}a_t\left(k\frac{\gamma}{\sqrt{ta_t}}-\frac{\alpha_1+\alpha_2+\beta_1+\beta_2}{2}t+t\frac{h(e^{\gamma/\sqrt{ta_t}})}{e^{\gamma/\sqrt{ta_t}}}
-\frac{\gamma}{\sqrt{ta_t}}(k+\Lambda^\prime(0)t)\right)
\end{multline*}
and, by \eqref{eq:def-Lambda}, we obtain
$$\lim_{t\to\infty}a_t\log\mathbb{E}\left[\exp\left(\frac{\gamma}{a_t}\sqrt{ta_t}\frac{X(t)-\mathbb{E}[X(t)|X(0)=k]}{t}\right)\Big|X(0)=k\right]
=\lim_{t\to\infty}ta_t\left(\Lambda\left(\frac{\gamma}{\sqrt{ta_t}}\right)-\frac{\gamma}{\sqrt{ta_t}}\Lambda^\prime(0)\right).$$
Finally, if we consider the second order Taylor formula for the
function $\Lambda$, we have
$$\lim_{t\to\infty}ta_t\left(\Lambda\left(\frac{\gamma}{\sqrt{ta_t}}\right)-\frac{\gamma}{\sqrt{ta_t}}\Lambda^\prime(0)\right)
=\lim_{t\to\infty}ta_t\left(\frac{\gamma^2}{2ta_t}\Lambda^{\prime\prime}(0)+o\left(\frac{\gamma^2}{ta_t}\right)\right)$$
for a remainder $o\left(\frac{\gamma^2}{ta_t}\right)$ such that
$o\left(\frac{\gamma^2}{ta_t}\right)/\frac{\gamma^2}{ta_t}\to 0$,
and \eqref{eq:GE-limit-MD-non-fractional} is checked.
\end{proof}

\begin{remark}\label{rem:alternative-proof-MD}
The expressions of mean and variance in Proposition
\ref{prop:mean-variance} yield the following limits:
$$\lim_{t\to\infty}\frac{\mathbb{E}[X(t)|X(0)=k]}{t}=\Lambda^\prime(0);\ 
\lim_{t\to\infty}\frac{\mathrm{Var}[X(t)|X(0)=k]}{t}=\Lambda^{\prime\prime}(0).$$
These limits give a generalization of the analogue limits in
\cite{DicrescenzoMacciMartinucci}.
\end{remark}

\section{Results with the inverse of the stable subordinator}\label{sec:time-fractional}
In this section we consider the process $\{X^\nu(t):t\geq 0\}$,
for $\nu\in(0,1)$, i.e.
\begin{equation}\label{eq:time-change-representation-fractional}
X^\nu(t):=X^1(T^\nu(t)),
\end{equation}
where $\{T^\nu(t):t\geq 0\}$ is the inverse of the stable
subordinator, independent of a version of the non-fractional
process $\{X^1(t):t\geq 0\}$ studied above.

So we recall some preliminaries. We start with the definition of
the Mittag-Leffler function (see e.g. \cite{Podlubny}, page 17):
$$E_\nu(x):=\sum_{j\geq 0}\frac{x^j}{\Gamma(\nu j+1)}\ (\mbox{for all}\ x\in\mathbb{R}).$$
Then we have
\begin{equation}\label{eq:mgf-fractional}
\mathbb{E}[e^{\gamma T^\nu(t)}]=E_\nu(\gamma t^\nu).
\end{equation}
In some references this formula is stated assuming that
$\gamma\leq 0$ but this restriction is not needed because we can
refer to the analytic continuation of the Laplace transform with
complex argument. We also recall that formula (24) in
\cite{MainardiMuraPagnini} provides a version of
\eqref{eq:mgf-fractional} for $t=1$ (in that formula there is $-s$
in place $\gamma$, and $s\in\mathbb{C}$).

\subsection{Non-asymptotic results}
We start with Proposition \ref{prop:pgf-fractional}, which
provides an expression for the probability generating functions
$\{F_k^\nu(\cdot,t):k\in\mathbb{Z},t\geq 0\}$ defined by
$$F_k^\nu(z,t):=\mathbb{E}\left[z^{X^\nu(t)}|X^\nu(0)=k\right]
=\sum_{n=-\infty}^\infty z^n p_{k,n}^\nu(t)\ (\mbox{for}\
k\in\mathbb{Z}),$$ where $\{p_{k,n}^\nu(t):k,n\in\mathbb{Z},t\geq
0\}$ are the state probabilities defined by
\begin{equation}\label{eq:pmf-notation-fractional}
p_{k,n}^\nu(t):=P(X^\nu(t)=n|X^\nu(0)=k).
\end{equation}
Obviously Proposition \ref{prop:pgf-fractional} is the analogue of
Proposition \ref{prop:pgf} (and we can recover it by setting
$\nu=1$).

\begin{proposition}\label{prop:pgf-fractional}
For $z>0$ we have
$$F_k^\nu(z,t)=z^k\left(\frac{E_\nu(\hat{h}_-(z)t^\nu)+E_\nu(\hat{h}_+(z)t^\nu)}{2}
+\frac{c_k(z)}{h(z)}\cdot\frac{E_\nu(\hat{h}_+(z)t^\nu)-E_\nu(\hat{h}_-(z)t^\nu)}{2}\right),$$
where $c_k(z)$ is as in \eqref{eq:coefficients} and
$\hat{h}_\pm(z)$ are the eigenvalues in \eqref{eq:eigenvalues}.
\end{proposition}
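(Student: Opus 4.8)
The plan is to exploit the independence in the subordination \eqref{eq:time-change-representation-fractional} and reduce everything to the known non-fractional generating function of Proposition \ref{prop:pgf}. Since $\{T^\nu(t):t\geq 0\}$ is independent of $\{X^1(t):t\geq 0\}$, conditioning on $T^\nu(t)$ gives, for $z>0$,
$$F_k^\nu(z,t)=\mathbb{E}\left[z^{X^1(T^\nu(t))}\big|X^1(0)=k\right]=\mathbb{E}\left[F_k(z,T^\nu(t))\right],$$
where $F_k(z,\cdot)$ is the function in Proposition \ref{prop:pgf}. The first step is therefore to rewrite $F_k(z,s)$ as a linear combination of exponentials in the time variable $s$. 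Using the eigenvalues $\hat h_\pm(z)=-\frac{\alpha_1+\alpha_2+\beta_1+\beta_2}{2}\pm\frac{h(z)}{z}$ in \eqref{eq:eigenvalues}, one has
$$e^{-\frac{\alpha_1+\alpha_2+\beta_1+\beta_2}{2}s}\cosh\Big(\frac{sh(z)}{z}\Big)=\frac{e^{\hat h_+(z)s}+e^{\hat h_-(z)s}}{2},\qquad e^{-\frac{\alpha_1+\alpha_2+\beta_1+\beta_2}{2}s}\sinh\Big(\frac{sh(z)}{z}\Big)=\frac{e^{\hat h_+(z)s}-e^{\hat h_-(z)s}}{2},$$
so that Proposition \ref{prop:pgf} becomes
$$F_k(z,s)=z^k\left(\frac{e^{\hat h_+(z)s}+e^{\hat h_-(z)s}}{2}+\frac{c_k(z)}{h(z)}\cdot\frac{e^{\hat h_+(z)s}-e^{\hat h_-(z)s}}{2}\right).$$

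Next I would substitute $s=T^\nu(t)$, take expectations, and use linearity together with the moment generating function formula \eqref{eq:mgf-fractional}, namely $\mathbb{E}[e^{\gamma T^\nu(t)}]=E_\nu(\gamma t^\nu)$. Applying this with $\gamma=\hat h_+(z)$ and $\gamma=\hat h_-(z)$ yields
$$F_k^\nu(z,t)=z^k\left(\frac{E_\nu(\hat h_+(z)t^\nu)+E_\nu(\hat h_-(z)t^\nu)}{2}+\frac{c_k(z)}{h(z)}\cdot\frac{E_\nu(\hat h_+(z)t^\nu)-E_\nu(\hat h_-(z)t^\nu)}{2}\right),$$
which is exactly the claimed expression.

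Two points need a little care, and I expect the second to be the only real (though mild) obstacle. First, for $z>0$ the quantity $\tilde h(z;\alpha_1,\alpha_2,\beta_1,\beta_2)$ in \eqref{eq:hz} is strictly positive, hence $h(z)>0$ and the eigenvalues $\hat h_\pm(z)$ are real; in particular $\hat h_+(z)$ may be positive, so one must invoke \eqref{eq:mgf-fractional} for positive argument, which is legitimate by the analytic-continuation remark following \eqref{eq:mgf-fractional}. Second, the interchange of the expectation with the linear combination (equivalently, a Fubini/dominated-convergence argument) has to be justified: this is immediate because $T^\nu(t)$ has a Mittag-Leffler law whose moment generating function $E_\nu(\cdot\,t^\nu)$ is finite on all of $\mathbb{R}$ (recall $E_\nu$ is an entire function), so that $\mathbb{E}[e^{\hat h_\pm(z)T^\nu(t)}]<\infty$ and each term has finite expectation. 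Finally, specializing $\nu=1$ recovers Proposition \ref{prop:pgf} since $E_1(x)=e^x$, as remarked in the text.
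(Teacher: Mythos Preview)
Your proof is correct and follows essentially the same approach as the paper: condition on $T^\nu(t)$ to reduce to $\mathbb{E}[F_k(z,T^\nu(t))]$, rewrite the hyperbolic functions as combinations of $e^{\hat h_\pm(z)s}$, and apply the moment generating function identity \eqref{eq:mgf-fractional}. Your additional remarks on the positivity of $h(z)$, the use of \eqref{eq:mgf-fractional} for positive argument via analytic continuation, and the integrability justification are welcome clarifications but do not change the argument.
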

\begin{proof}
We recall that $T^\nu(0)=0$. Then, if we refer the expression of
the probability generating functions
$\{F_k(\cdot,t):k\in\mathbb{Z},t\geq 0\}$ in Proposition
\ref{prop:pgf}, we have
\begin{multline*}
F_k^\nu(z,t)=\mathbb{E}\left[z^{X^1(T^\nu(t))}|X^1(0)=k\right]=\mathbb{E}\left[F_k(z,T^\nu(t))|X^1(0)=k\right]\\
=\mathbb{E}\left[z^ke^{-\frac{\alpha_1+\alpha_2+\beta_1+\beta_2}{2}T^\nu(t)}\left(\cosh\left(\frac{T^\nu(t)h(z)}{z}\right)+
\frac{c_k(z)}{h(z)}\sinh\left(\frac{T^\nu(t)h(z)}{z}\right)\right)|X^1(0)=k\right].
\end{multline*}
Then, by taking into account the moment generating function in
\eqref{eq:mgf-fractional}, after some manipulations we get
\begin{multline*}
\tilde{F}_k^{\nu,\mu}(z,t)=z^k\left(\left(1+\frac{c_k(z)}{h(z)}\right)\frac{\mathbb{E}[e^{\hat{h}_+(z)T^\nu(t)}]}{2}
+\left(1-\frac{c_k(z)}{h(z)}\right)\frac{\mathbb{E}[e^{\hat{h}_-(z)T^\nu(t)}]}{2}\right)\\
=z^k\left(\left(1+\frac{c_k(z)}{h(z)}\right)\frac{E_\nu(\hat{h}_+(z)t^\nu)}{2}
+\left(1-\frac{c_k(z)}{h(z)}\right)\frac{E_\nu(\hat{h}_-(z)t^\nu)}{2}\right).
\end{multline*}
So we can immediately check that this coincides with the
expression in the statement of the proposition.
\end{proof}

\subsection{Asymptotic results}
Here we present Proposition \ref{prop:LD-fractional}, which is the
analogue of Proposition \ref{prop:LD}. Unfortunately we cannot 
present a moderate deviation result, namely we cannot present the 
analogue of Proposition \ref{prop:MD}; see the discussion in Remark
\ref{rem:MD-fractional-missing}.

We conclude this section with Remark \ref{rem:comparison}, where
we compare the convergence of processes for different values of
$\nu\in (0,1)$. In fact, if we consider the framework of
Proposition \ref{prop:LD-fractional} below, the rate function
$\Lambda_\nu^*(y)$ uniquely vanishes at $y=0$,
and therefore $\frac{X^\nu(t)}{t}$ converges to $0$ as
$t\to\infty$ (we recall that, for $\nu=1$, $\frac{X^\nu(t)}{t}$ 
converges to $\Lambda^\prime(0)$ as $t\to\infty$); moreover, the
more $\Lambda_\nu^*(y)$ is larger around $y=0$, the more the 
convergence of $\frac{X^\nu(t)}{t}$ is faster. In particular in 
Remark \ref{rem:comparison} we take $0<\nu_1<\nu_2<1$, and we 
get strict inequalities between
$\Lambda_{\nu_1}^*(y)$ and $\Lambda_{\nu_2}^*(y)$ in a
sufficiently small neighborhood of the origin $y=0$ (except the
origin itself because we have
$\Lambda_{\nu_1}^*(0)=\Lambda_{\nu_2}^*(0)=0$).

\begin{proposition}\label{prop:LD-fractional}
We set
$$\Lambda_\nu(\gamma):=\left\{\begin{array}{ll}
(\Lambda(\gamma))^{1/\nu}&\ \mbox{if}\ \Lambda(\gamma)\geq 0\\
0&\ \mbox{if}\ \Lambda(\gamma)<0,
\end{array}\right.$$	
where $\Lambda$ is the function in \eqref{eq:def-Lambda}. Then, for all $k\in\mathbb{Z}$,
$\left\{P\left(\frac{X^\nu(t)}{t}\in\cdot\Big|X^\nu(0)=k\right):t>0\right\}$
satisfies the LDP with speed function $v_t=t$ and good rate
function $\Lambda_\nu^*(y):=\sup_{\gamma\in\mathbb{R}}\{\gamma
y-\Lambda_\nu(\gamma)\}$.
\end{proposition}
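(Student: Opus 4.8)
The plan is to apply the Gärtner–Ellis Theorem to the family $\left\{P\left(\frac{X^\nu(t)}{t}\in\cdot\,\big|\,X^\nu(0)=k\right):t>0\right\}$ with speed $v_t=t$, exactly as in Proposition \ref{prop:LD}, but now feeding in the probability generating function from Proposition \ref{prop:pgf-fractional} in place of the one from Proposition \ref{prop:pgf}. The first step is to compute, for each fixed $\gamma\in\mathbb{R}$, the limiting scaled cumulant generating function
$$
\widetilde{\Lambda}_\nu(\gamma):=\lim_{t\to\infty}\frac{1}{t}\log\mathbb{E}\left[\exp\left(\gamma\,\tfrac{X^\nu(t)}{t}\cdot t\right)\Big|X^\nu(0)=k\right]
=\lim_{t\to\infty}\frac{1}{t}\log F_k^\nu(e^\gamma,t).
$$
Writing $z=e^\gamma>0$, Proposition \ref{prop:pgf-fractional} gives $F_k^\nu(z,t)$ as a linear combination (with $z$-dependent, $t$-independent coefficients $z^k$, $\frac{c_k(z)}{h(z)}$) of the two Mittag-Leffler terms $E_\nu(\hat h_\pm(z)t^\nu)$.

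The key analytic input is the asymptotics of $E_\nu$: for $\nu\in(0,1)$ and $x\to+\infty$ one has $E_\nu(x)\sim\frac{1}{\nu}\exp(x^{1/\nu})$, while for $x\to-\infty$, $E_\nu(x)\to 0$ polynomially (in fact $E_\nu(x)\sim -\frac{1}{x\Gamma(1-\nu)}$). Therefore $\frac{1}{t}\log E_\nu(\hat h_\pm(e^\gamma)t^\nu)\to \max\{\hat h_\pm(e^\gamma),0\}$. Since $\hat h_+(z)\ge \hat h_-(z)$ and, by \eqref{eq:def-Lambda} and \eqref{eq:eigenvalues}, $\hat h_+(e^\gamma)=\Lambda(\gamma)$, the dominant term is $E_\nu(\hat h_+(e^\gamma)t^\nu)$ and we obtain
$$
\widetilde{\Lambda}_\nu(\gamma)=\max\{\Lambda(\gamma),0\}^{1/\nu}\cdot\mathbf 1_{\{\Lambda(\gamma)\ge 0\}}=\Lambda_\nu(\gamma),
$$
the function in the statement. (When $\Lambda(\gamma)<0$ the limit is $0$; the coefficient $\frac{c_k(z)}{h(z)}$ only affects sub-exponential prefactors and is harmless, and one checks $h(e^\gamma)>0$ so the combination does not degenerate.) I would also record that $\Lambda_\nu$ is finite everywhere on $\mathbb{R}$, hence the origin is in the interior of its effective domain, so the Gärtner–Ellis hypotheses are met; lower semicontinuity and the goodness of $\Lambda_\nu^*$ follow since $\Lambda_\nu$ is continuous and, being finite on all of $\mathbb{R}$, its Legendre–Fenchel transform $\Lambda_\nu^*$ has compact level sets (it is superlinear because $\Lambda_\nu$ is finite everywhere). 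One then concludes the full LDP with rate $\Lambda_\nu^*$ by the Gärtner–Ellis Theorem, noting that the limit function $\Lambda_\nu$ is essentially smooth / the relevant steepness condition holds, or more simply by invoking the version of the theorem that yields the full LDP whenever the limit is finite and lower semicontinuous on $\mathbb{R}$.

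The main obstacle is making the Mittag-Leffler asymptotics rigorous and uniform enough to pass to the $\frac{1}{t}\log$ limit cleanly: one must handle the regime where $\Lambda(\gamma)=\hat h_+(e^\gamma)$ changes sign (so that the argument $\hat h_+(e^\gamma)t^\nu$ transitions between $-\infty$ and $+\infty$ asymptotics), control the second Mittag-Leffler term $E_\nu(\hat h_-(e^\gamma)t^\nu)$ which may also blow up when $\hat h_-(e^\gamma)>0$, and verify that the bounded coefficients $z^k$ and $\frac{c_k(z)}{h(z)}$ — which can be negative — do not produce cancellations that spoil the logarithmic asymptotics. A secondary (routine) point is checking the steepness/essential-smoothness condition needed for the Gärtner–Ellis Theorem to give the upper bound on all closed sets, but since $\Lambda_\nu$ is finite on all of $\mathbb{R}$ this reduces to differentiability, which fails only at the boundary $\{\gamma:\Lambda(\gamma)=0\}$ where $\Lambda_\nu$ has a corner; there one argues directly (the boundary points of $\{\Lambda_\nu=0\}$ contribute the flat piece of $\Lambda_\nu^*$ at $y=0$) exactly as in the proof of Proposition \ref{prop:LD} in \cite{DicrescenzoMacciMartinucci}. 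I expect the bulk of the argument to be a short adaptation of that reference, with the genuinely new content confined to the Mittag-Leffler limit computation above.
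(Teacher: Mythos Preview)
Your approach is essentially the same as the paper's: compute $\lim_{t\to\infty}\frac{1}{t}\log F_k^\nu(e^\gamma,t)$ from the Mittag-Leffler asymptotics (the paper cites the expansion (1.8.27) in \cite{KilbasSrivastavaTrujillo} for the limit $\lim_{t\to\infty}\frac{1}{t}\log E_\nu(ct^\nu)=c^{1/\nu}\mathbf 1_{\{c>0\}}$), identify $\hat h_+(e^\gamma)=\Lambda(\gamma)$, handle the linear combination via the principle of the largest term (Lemma 1.2.15 in \cite{DemboZeitouni}), and apply G\"artner--Ellis.

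Two of the obstacles you flag are non-issues, though. First, $\hat h_-(z)=-\tfrac{\alpha_1+\alpha_2+\beta_1+\beta_2}{2}-\tfrac{h(z)}{z}<0$ for every $z>0$, so the second Mittag-Leffler term never blows up and contributes $0$ to the log-limit; the paper simply records this. Second, $\Lambda_\nu$ does \emph{not} have a corner at the boundary $\{\Lambda(\gamma)=0\}$: since $\nu\in(0,1)$ we have $1/\nu>1$, so $x\mapsto (x_+)^{1/\nu}$ is $C^1$ at $0$ with vanishing derivative, and hence $\Lambda_\nu$ is differentiable on all of $\mathbb{R}$. The paper uses exactly this to invoke G\"artner--Ellis directly (``finite everywhere and differentiable''), so the extra boundary argument you anticipate is unnecessary.
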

\begin{proof}
We want to apply the G\"{a}rtner Ellis Theorem and, for all
$\gamma\in\mathbb{R}$, we have to take the limit of
$\frac{1}{t}\log F_k^\nu(e^\gamma,t)$ (as $\to\infty$). Obviously
we consider the expression of the function $F_k^\nu(z,t)$ in
Proposition \ref{prop:pgf-fractional}.

Firstly, if $\nu\in(0,1)$, we have
\begin{equation}\label{eq:GE-limit-fractional}
\lim_{t\to\infty}\frac{1}{t}\log F_k^\nu(e^\gamma,t)=\Lambda_\nu(\gamma)\ (\mbox{for all}\ \gamma\in\mathbb{R});
\end{equation}
this can be checked noting that $\hat{h}_-(z)<0$,
$\hat{h}_+(e^\gamma)=\Lambda(\gamma)$ (for all
$\gamma\in\mathbb{R}$), by taking into account the limit
$$\lim_{t\to\infty}\frac{1}{t}\log E_\nu(ct^\nu)=\left\{\begin{array}{ll}
0&\ \mbox{if}\ c\leq 0\\
c^{1/\nu}&\ \mbox{if}\ c>0
\end{array}\right.$$
(this limit can be seen as a consequence of an expansion of
Mittag-Leffler function; see (1.8.27) in
\cite{KilbasSrivastavaTrujillo} with $\alpha=\nu$ and $\beta=1$),
and by considering a suitable application of Lemma 1.2.15 in
\cite{DemboZeitouni}.

Moreover the function $\Lambda_\nu$ in the limit \eqref{eq:GE-limit-fractional}
is nonnegative and attains its minimum, equal to zero, at the points of the set
$\{\gamma\in\mathbb{R}:\Lambda(\gamma)\leq 0\}$; we recall that this set can be
reduced to the single point $\gamma=0$ if and only if $\Lambda^\prime(0)=0$. 
Thus we can apply the G\"{a}rtner Ellis Theorem (because the function in the 
limit is finite everywhere and differentiable), and the desired LDP holds.
\end{proof}

\begin{remark}\label{rem:MD-fractional-missing}
We have some difficulties to get the extension of Proposition
\ref{prop:MD} for the time-fractional case. In fact, if a 
moderate deviation holds, we expect that it is governed by the
rate function $J_\nu(y):=\frac{y^2}{2\Lambda^{\prime\prime}(0)}$,
where $\Lambda^{\prime\prime}(0)$ is the second derivative at the
origin $\gamma=0$ of $\Lambda_\nu$, and assuming that such value exists
and it is finite. On the contrary $\Lambda^{\prime\prime}(0)$ exists
only if $\nu\in(0,1/2]$, and it is equal to zero. So, in such a case,
we should have
$$J_\nu(y):=\left\{\begin{array}{ll}
0&\ \mbox{if}\ y=0\\
\infty&\ \mbox{if}\ y\neq 0,
\end{array}\right.$$
and this rate function is not interesting; in fact it is the
largest rate function that we have for a sequence that converges
to zero (for instance this rate function comes up when we have
constant random variables converging to zero).
\end{remark}

\begin{remark}\label{rem:comparison}
We take $0<\nu_1<\nu_2<1$. We recall that:
\begin{itemize}
\item for $\nu\in(0,1)$ and $y\in\mathbb{R}$, the equation $\Lambda_\nu^\prime(\gamma)=y$
admits a solution; for the case $y=0$ we have
$$\{\gamma\in\mathbb{R}:\Lambda_\nu^\prime(\gamma)=0\}=\{\gamma\in\mathbb{R}:\Lambda(\gamma)\leq 0\},$$
and therefore we have a unique solution $\gamma=0$ if and only if $\Lambda^\prime(0)=0$; on the 
contrary, if $y\neq 0$, we have a unique solution $\gamma_{y,\nu}\in\mathbb{R}$, say;
\item there exists $\delta>0$ such that, if $\inf\{|\gamma-\tilde{\gamma}|:\Lambda(\tilde{\gamma})\leq 0\}<\delta$, 
then $\Lambda(\gamma)\in(0,1)$, and therefore
$0<\Lambda_{\nu_1}(\gamma)<\Lambda_{\nu_2}(\gamma)$.
\end{itemize}
Thus, by combining these two statements, there exists
$\delta^\prime>0$ such that, for $0<|y|<\delta^\prime$, we have
$$0<\Lambda_{\nu_2}^*(y)=\gamma_{y,\nu_2}y-\Lambda_{\nu_2}(\gamma_{y,\nu_2})<\gamma_{y,\nu_2}y-\Lambda_{\nu_1}(\gamma_{y,\nu_2})
\leq\sup_{\gamma\in\mathbb{R}}\{\gamma
y-\Lambda_{\nu_1}(\gamma)\}=\Lambda_{\nu_1}^*(y)$$ (see Figure
\ref{fig2} where $\Lambda^\prime(0)=0$ and we consider some specific values of $\nu$). In conclusion we
can say that
\begin{equation}\label{eq:comparison-statement}
\mbox{$\frac{X^{\nu_1}(t)}{t}$ converges to zero faster than
$\frac{X^{\nu_2}(t)}{t}$ (as $t\to\infty$).}
\end{equation}

We also remark that the statement \eqref{eq:comparison-statement}
is not surprising if we take into account the time-change
representation \eqref{eq:time-change-representation-fractional}.
In fact, if we denote the stable subordinator by $\{S^\nu(t):t\geq
0\}$, we have that
\begin{equation}\label{eq:fgm-stable-subordinator}
\mathbb{E}[e^{\gamma S^\nu(t)}]=\left\{\begin{array}{ll}
e^{-|\gamma|^\nu t}&\ \mbox{if}\ \gamma\leq 0\\
\infty&\ \mbox{otherwise};
\end{array}\right.
\end{equation}
thus, as $\nu\in(0,1)$ decreases, the increasing trend of
$\{S^\nu(t):t\geq 0\}$ increases, and therefore the increasing
trend of the inverse of the stable subordinator $\{T^\nu(t):t\geq
0\}$ decreases. Then, for $0<\nu_1<\nu_2<1$, the increasing
trend of the random time-change $\{T^{\nu_1}(t):t\geq 0\}$ for
$X(\cdot)$ is slower than the increasing trend of
$\{T^{\nu_2}(t):t\geq 0\}$; so $\frac{X^1(T^{\nu_1}(t))}{t}$
converges to zero faster than $\frac{X^1(T^{\nu_2}(t))}{t}$ (as
$t\to\infty$), and this statement meets
\eqref{eq:comparison-statement}.
\end{remark}

\begin{figure}[ht]
\begin{center}
\includegraphics[angle=0,width=0.60\textwidth]{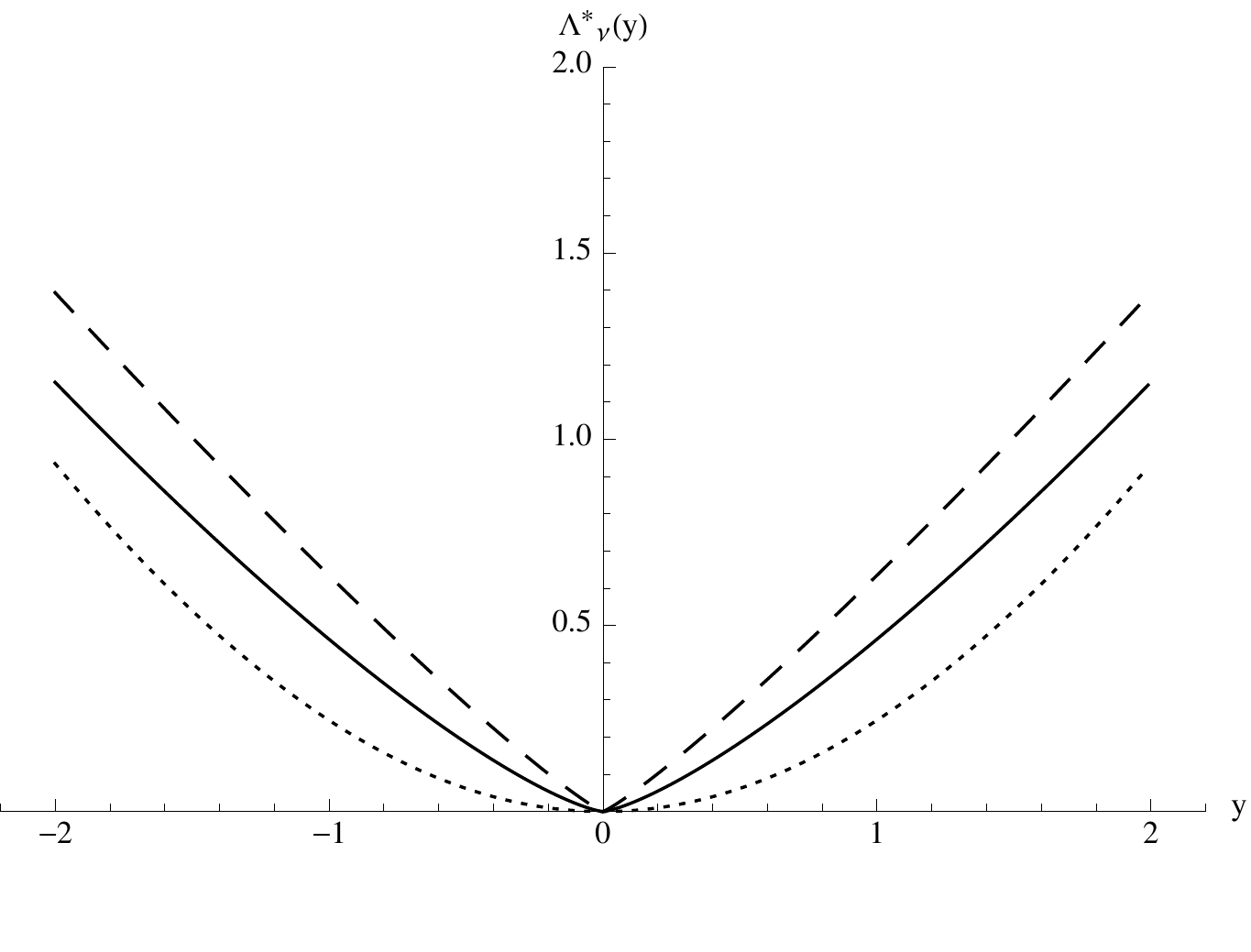}
\caption{The rate function $\Lambda_\nu^*$ around $y=0$ for
$\Lambda^\prime(0)=0$ (only in this case $\Lambda_\nu^*$ is 
differentiable everywhere; on the contrary, for $y=0$, left
and right hand derivatives of $\Lambda_\nu^*(y)$ do not coincide)
and some values	for $\nu$: $\nu=1/4$ (dashed line), $\nu=1/2$ 
(continuous line) and $\nu=1$ (dotted line).}\label{fig2}
\end{center}
\end{figure}

\section{Results with the (possibly tempered) stable subordinator}\label{sec:time-change-TSS}
In this section we consider the process
$\{\tilde{X}^{\nu,\mu}(t):t\geq 0\}$, for $\nu\in(0,1)$ and
$\mu\geq 0$, i.e.
$$\tilde{X}^{\nu,\mu}(t):=X^1(\tilde{S}^{\nu,\mu}(t)),$$
where $\{\tilde{S}^{\nu,\mu}(t):t\geq 0\}$ is a (possibly
tempered) stable subordinator, independent of a version of the
non-fractional process $\{X^1(t):t\geq 0\}$ studied above.

So we recall some preliminaries on $\{\tilde{S}^{\nu,\mu}(t):t\geq
0\}$. Firstly, for $t>0$, we have
$$P(\tilde{S}^{\nu,\mu}(t)\in dx)=\underbrace{e^{-\mu x+\mu^\nu t}f_{S^\nu(t)}(x)}_{=:f_{\tilde{S}^{\nu,\mu}(t)}(x)}dx,$$
where
$$P(S^\nu(t)\in dx)=f_{S^\nu(t)}(x)dx$$
and $\{S^\nu(t):t\geq 0\}$ is the stable subordinator; note that
$\{\tilde{S}^{\nu,\mu}(t):t\geq 0\}$ with $\mu=0$ coincides with
$\{S^\nu(t):t\geq 0\}$. Moreover we have
\begin{equation}\label{eq:mgf-TSS}
\mathbb{E}[e^{\gamma\tilde{S}^{\nu,\mu}(t)}]=e^{\mu^\nu
t}\mathbb{E}[e^{(\gamma-\mu )S^\nu(t)}]=\left\{
\begin{array}{ll}
e^{-t((\mu-\gamma)^\nu-\mu^\nu)}&\ \mbox{if}\ \gamma\leq\mu\\
\infty&\ \mbox{otherwise},
\end{array}\right.
\end{equation}
where we take into account \eqref{eq:fgm-stable-subordinator}.
Moreover, for $\mu>0$, if we consider the function
$\Psi_{\nu,\mu}$ defined by
\begin{equation}\label{eq:def-Psi}
\Psi_{\nu,\mu}(\gamma):=\left\{\begin{array}{ll}
\mu^\nu-(\mu-\gamma)^\nu&\ \mbox{if}\ \gamma\leq\mu\\
\infty&\ \mbox{otherwise},
\end{array}\right.
\end{equation}
for all $t>0$ we have
\begin{equation}\label{eq:Psi-means}
\frac{\mathbb{E}[\tilde{S}^{\nu,\mu}(t)]}{t}=\frac{\nu\mu^{\nu-1}t}{t}=\nu\mu^{\nu-1}=\Psi_{\nu,\mu}^\prime(0)
\end{equation}
and
\begin{equation}\label{eq:Psi-variances}
\frac{\mathrm{Var}[\tilde{S}^{\nu,\mu}(t)]}{t}=\frac{-\nu(\nu-1)\mu^{\nu-2}t}{t}=-\nu(\nu-1)\mu^{\nu-2}=\Psi_{\nu,\mu}^{\prime\prime}(0)
\end{equation}
(actually, if $\mu=0$, the above formulas \eqref{eq:Psi-means} and
\eqref{eq:Psi-variances} hold as left derivatives equal to
infinity).

\subsection{Non-asymptotic results}
We start with Proposition \ref{prop:pgf-TSS}, which provides an
expression for the probability generating functions
$\{\tilde{F}_k^{\nu,\mu}(\cdot,t):k\in\mathbb{Z},t\geq 0\}$
defined by
$$\tilde{F}_k^{\nu,\mu}(z,t):=\mathbb{E}\left[z^{\tilde{X}^{\nu,\mu}(t)}|\tilde{X}^{\nu,\mu}(0)=k\right]
=\sum_{n=-\infty}^\infty z^n\tilde{p}_{k,n}^{\nu,\mu}(t)\
(\mbox{for}\ k\in\mathbb{Z}),$$ where
$\{\tilde{p}_{k,n}^{\nu,\mu}(t):k,n\in\mathbb{Z},t\geq 0\}$ are
the state probabilities defined by
\begin{equation}\label{eq:pmf-notation-TSS}
\tilde{p}_{k,n}^{\nu,\mu}(t):=P(\tilde{X}^{\nu,\mu}(t)=n|\tilde{X}^{\nu,\mu}(0)=k).
\end{equation}
Obviously Proposition \ref{prop:pgf-TSS} is the analogue of
Propositions \ref{prop:pgf} and \ref{prop:pgf-fractional}. The
condition $\hat{h}_+(z)\leq\mu$ will be discussed after the proof.

\begin{proposition}\label{prop:pgf-TSS}
For $z>0$ we have
$$\tilde{F}_k^{\nu,\mu}(z,t)=\left\{\begin{array}{ll}
z^k\left(\frac{e^{-t((\mu-\hat{h}_+(z))^\nu-\mu^\nu)}+e^{-t((\mu-\hat{h}_-(z))^\nu-\mu^\nu)}}{2}\right.\\
\ \ \
+\left.\frac{c_k(z)}{h(z)}\cdot\frac{e^{-t((\mu-\hat{h}_+(z))^\nu-\mu^\nu)}-e^{-t((\mu-\hat{h}_-(z))^\nu-\mu^\nu)}}{2}\right)&\
\mbox{if}\ \hat{h}_+(z)\leq\mu\\
\infty&\ \mbox{otherwise},
\end{array}\right.$$
where $c_k(z)$ is as in \eqref{eq:coefficients} and
$\hat{h}_\pm(z)$ are the eigenvalues in \eqref{eq:eigenvalues}.
\end{proposition}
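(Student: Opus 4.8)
The plan is to mimic exactly the strategy used in the proof of Proposition~\ref{prop:pgf-fractional}, replacing the inverse stable subordinator $T^\nu(t)$ by the (possibly tempered) stable subordinator $\tilde{S}^{\nu,\mu}(t)$. First I would use the independence of $\{X^1(t):t\ge 0\}$ and $\{\tilde{S}^{\nu,\mu}(t):t\ge 0\}$, together with $\tilde{S}^{\nu,\mu}(0)=0$, to condition on the time-change and write
$$\tilde{F}_k^{\nu,\mu}(z,t)=\mathbb{E}\left[z^{X^1(\tilde{S}^{\nu,\mu}(t))}\,\big|\,X^1(0)=k\right]
=\mathbb{E}\left[F_k(z,\tilde{S}^{\nu,\mu}(t))\,\big|\,X^1(0)=k\right],$$
where $F_k(z,\cdot)$ is the probability generating function of the basic process given in Proposition~\ref{prop:pgf}. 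Then I would substitute the closed form
$F_k(z,s)=z^k e^{-\frac{\alpha_1+\alpha_2+\beta_1+\beta_2}{2}s}\bigl(\cosh(sh(z)/z)+\tfrac{c_k(z)}{h(z)}\sinh(sh(z)/z)\bigr)$
and rewrite the hyperbolic functions in exponential form. Using $\hat{h}_\pm(z)=-\frac{\alpha_1+\alpha_2+\beta_1+\beta_2}{2}\pm\frac{h(z)}{z}$ from \eqref{eq:eigenvalues}, this collapses to
$$F_k(z,s)=z^k\left(\Bigl(1+\tfrac{c_k(z)}{h(z)}\Bigr)\tfrac{e^{\hat{h}_+(z)s}}{2}+\Bigl(1-\tfrac{c_k(z)}{h(z)}\Bigr)\tfrac{e^{\hat{h}_-(z)s}}{2}\right),$$
so that by linearity of expectation
$$\tilde{F}_k^{\nu,\mu}(z,t)=z^k\left(\Bigl(1+\tfrac{c_k(z)}{h(z)}\Bigr)\tfrac{\mathbb{E}[e^{\hat{h}_+(z)\tilde{S}^{\nu,\mu}(t)}]}{2}+\Bigl(1-\tfrac{c_k(z)}{h(z)}\Bigr)\tfrac{\mathbb{E}[e^{\hat{h}_-(z)\tilde{S}^{\nu,\mu}(t)}]}{2}\right).$$

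Next I would invoke the moment generating function formula \eqref{eq:mgf-TSS}, namely $\mathbb{E}[e^{\gamma\tilde{S}^{\nu,\mu}(t)}]=e^{-t((\mu-\gamma)^\nu-\mu^\nu)}$ for $\gamma\le\mu$ and $+\infty$ otherwise. Since $\hat{h}_-(z)<\hat{h}_+(z)$ for $z>0$ (the two eigenvalues differ by $2h(z)/z>0$), the finiteness is governed entirely by the larger eigenvalue: if $\hat{h}_+(z)\le\mu$ then both expectations are finite and equal to $e^{-t((\mu-\hat{h}_+(z))^\nu-\mu^\nu)}$ and $e^{-t((\mu-\hat{h}_-(z))^\nu-\mu^\nu)}$ respectively, whereas if $\hat{h}_+(z)>\mu$ the coefficient $1+\frac{c_k(z)}{h(z)}$ multiplying the divergent term is strictly positive (one checks $c_k(z)>0$ and $c_k(z)<h(z)$ fails in general, but $c_k(z)/h(z)>-1$ always holds since the hyperbolic representation forces the exponential coefficients to be nonnegative), so the whole expression is $+\infty$. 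Substituting the two finite values and regrouping the $e^{-t((\mu-\hat{h}_\pm(z))^\nu-\mu^\nu)}$ terms into a symmetric and an antisymmetric part, using
$\bigl(1\pm\tfrac{c_k(z)}{h(z)}\bigr)\tfrac{A_\pm}{2}$ rearranged as $\tfrac{A_++A_-}{2}\pm\tfrac{c_k(z)}{h(z)}\tfrac{A_+-A_-}{2}$, yields precisely the displayed formula in the statement.

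The only genuinely delicate point is the case analysis around $\hat{h}_+(z)=\mu$: one must be careful that the divergence of $\mathbb{E}[e^{\hat{h}_+(z)\tilde{S}^{\nu,\mu}(t)}]$ is not accidentally cancelled by a vanishing or negative coefficient. I would resolve this by observing directly from Proposition~\ref{prop:pgf} that for $z>0$ and $s>0$ the quantity $F_k(z,s)/z^k=\cosh(sh(z)/z)+\tfrac{c_k(z)}{h(z)}\sinh(sh(z)/z)$ is a positive linear combination of $e^{sh(z)/z}$ and $e^{-sh(z)/z}$ with the coefficient of $e^{sh(z)/z}$ equal to $\tfrac12\bigl(1+\tfrac{c_k(z)}{h(z)}\bigr)$, and this must be strictly positive because $F_k(z,s)$ grows like $e^{\hat{h}_+(z)s}$ as $s\to\infty$ (it cannot be bounded for a genuine generating function unless $h(z)=0$, which does not occur for $z>0$ since $\tilde h(z;\alpha_1,\alpha_2,\beta_1,\beta_2)>0$). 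Hence the coefficient is bounded away from zero on the relevant range and the divergence survives. With this in hand the rest is a routine regrouping of exponentials, and no further computation is required.
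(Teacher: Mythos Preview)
Your proposal is correct and follows essentially the same route as the paper: condition on the independent time-change (using $\tilde{S}^{\nu,\mu}(0)=0$), insert the closed form of $F_k(z,\cdot)$ from Proposition~\ref{prop:pgf}, rewrite the hyperbolic terms as the combination $\bigl(1\pm\tfrac{c_k(z)}{h(z)}\bigr)\tfrac{e^{\hat{h}_\pm(z)s}}{2}$, and then apply the moment generating function \eqref{eq:mgf-TSS}. The paper's proof is in fact terser than yours---it simply notes $\hat{h}_-(z)<0$ and asserts ``infinity otherwise'' without the positivity discussion of $1+\tfrac{c_k(z)}{h(z)}$ that you add; your extra paragraph on why the divergent term cannot be cancelled is a welcome (if somewhat informal) addition rather than a departure in method.
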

\begin{proof}
We recall that $\tilde{S}^{\nu,\mu}(0)=0$. Then, if we refer the
expression of the probability generating functions
$\{F_k(\cdot,t):k\in\mathbb{Z},t\geq 0\}$ in Proposition
\ref{prop:pgf}, we have
\begin{multline*}
\tilde{F}_k^{\nu,\mu}(z,t)=\mathbb{E}\left[z^{X^1(\tilde{S}^{\nu,\mu}(t))}|X^1(0)=k\right]=\mathbb{E}\left[F_k(z,\tilde{S}^{\nu,\mu}(t))|X^1(0)=k\right]\\
=\mathbb{E}\left[z^ke^{-\frac{\alpha_1+\alpha_2+\beta_1+\beta_2}{2}\tilde{S}^{\nu,\mu}(t)}\left(\cosh\left(\frac{\tilde{S}^{\nu,\mu}(t)h(z)}{z}\right)+
\frac{c_k(z)}{h(z)}\sinh\left(\frac{\tilde{S}^{\nu,\mu}(t)h(z)}{z}\right)\right)|X^1(0)=k\right].
\end{multline*}
Then, by taking into account the moment generating function in
\eqref{eq:mgf-TSS}, after some manipulations we get (we recall
that $\hat{h}_-(z)<0$)
\begin{multline*}
\tilde{F}_k^{\nu,\mu}(z,t)=z^k\left(\left(1+\frac{c_k(z)}{h(z)}\right)\frac{\mathbb{E}[e^{\hat{h}_+(z)\tilde{S}^{\nu,\mu}(t)}]}{2}
+\left(1-\frac{c_k(z)}{h(z)}\right)\frac{\mathbb{E}[e^{\hat{h}_-(z)\tilde{S}^{\nu,\mu}(t)}]}{2}\right)\\
=z^k\left(\left(1+\frac{c_k(z)}{h(z)}\right)\frac{e^{-t((\mu-\hat{h}_+(z))^\nu-\mu^\nu)}}{2}
+\left(1-\frac{c_k(z)}{h(z)}\right)\frac{e^{-t((\mu-\hat{h}_-(z))^\nu-\mu^\nu)}}{2}\right)
\end{multline*}
if $\hat{h}_+(z)\leq\mu$ (and infinity otherwise). So we can
easily check that this coincides with the expression in the
statement of the proposition.
\end{proof}

We conclude this section with a brief discussion on the condition
$\hat{h}_+(z)\leq\mu$ for $\mu\geq 0$. For $z>0$ we have
$$\frac{\sqrt{(\alpha_1+\alpha_2-(\beta_1+\beta_2))^2z^2+4(\beta_1z^2+\beta_2)(\alpha_1z^2+\alpha_2)}}{z}-\frac{\alpha_1+\alpha_2+\beta_1+\beta_2}{2}\leq\mu$$
by \eqref{eq:eigenvalues} and \eqref{eq:hz}. Then, after some easy
computations, it is easy to check that this is equivalent to
$$\alpha_1\beta_1z^4-(\mu^2+\mu(\alpha_1+\alpha_2+\beta_1+\beta_2)+\alpha_1\beta_1+\alpha_2\beta_2)z^2+\alpha_2\beta_2\leq 0;$$
in conclusion we have $\hat{h}_+(z)\leq\mu$ if and only if
$\sqrt{m_-(\mu)}\leq z\leq\sqrt{m_+(\mu)}$, where
\begin{multline*}
m_\pm(\mu):=\frac{1}{2\alpha_1\beta_1}\left\{\mu^2+\mu(\alpha_1+\alpha_2+\beta_1+\beta_2)+\alpha_1\beta_1+\alpha_2\beta_2\right.\\
\left.\pm\sqrt{(\mu^2+\mu(\alpha_1+\alpha_2+\beta_1+\beta_2)+\alpha_1\beta_1+\alpha_2\beta_2)^2-4\alpha_1\alpha_2\beta_1\beta_2}\right\}.
\end{multline*}
In particular, for case $\mu=0$, we have $\hat{h}_+(z)\leq 0$ if
and only if
$\sqrt{\min\left\{1,\frac{\alpha_2\beta_2}{\alpha_1\beta_1}\right\}}\leq
z\leq\sqrt{\max\left\{1,\frac{\alpha_2\beta_2}{\alpha_1\beta_1}\right\}}$
because
$$m_\pm(0)=\frac{\alpha_1\beta_1+\alpha_2\beta_2\pm |\alpha_1\beta_1-\alpha_2\beta_2|}{2\alpha_1\beta_1};$$
so we have $m_-(0)=1$ and/or $m_+(0)=1$, and they are both equal
to 1 if and only if $\alpha_1\beta_1=\alpha_2\beta_2$ or,
equivalently, $\Lambda^\prime(0)=0$ by Lemma
\ref{lem:2-derivatives-Lambda-origin}.

\subsection{Asymptotic results}
Here we present Proposition \ref{prop:LD-TSS}, which is the
analogue of Propositions \ref{prop:LD} and
\ref{prop:LD-fractional}. In this case we have no restriction on
the value of $\Lambda^\prime(0)$. Finally we present Proposition
\ref{prop:MD-TSS}, which is the analogue of Proposition
\ref{prop:MD}. In the proofs of Propositions \ref{prop:LD-TSS} and
\ref{prop:MD-TSS} we apply the G\"{a}rtner Ellis Theorem, and the
condition $\mu>0$ is required.

\begin{proposition}\label{prop:LD-TSS}
Assume that $\mu>0$, and set
$$\tilde{\Lambda}_{\nu,\mu}(\gamma):=\left\{\begin{array}{ll}
\mu^\nu-(\mu-\Lambda(\gamma))^\nu&\ \mbox{if}\ \Lambda(\gamma)\leq\mu\\
\infty&\ \mbox{otherwise},
\end{array}\right.$$
where $\Lambda$ is the function in \eqref{eq:def-Lambda}. Then,
for all $k\in\mathbb{Z}$,
$\left\{P\left(\frac{\tilde{X}^{\nu,\mu}(t)}{t}\in\cdot\Big|\tilde{X}^{\nu,\mu}(0)=k\right):t>0\right\}$
satisfies the LDP with speed function $v_t=t$ and good rate
function
$\tilde{\Lambda}_{\nu,\mu}^*(y):=\sup_{\gamma\in\mathbb{R}}\{\gamma
y-\tilde{\Lambda}_{\nu,\mu}(\gamma)\}$.
\end{proposition}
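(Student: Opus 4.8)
The strategy is to apply the G\"{a}rtner Ellis Theorem to the family $\left\{P\left(\frac{\tilde{X}^{\nu,\mu}(t)}{t}\in\cdot\Big|\tilde{X}^{\nu,\mu}(0)=k\right):t>0\right\}$, exactly as in the proofs of Propositions \ref{prop:LD} and \ref{prop:LD-fractional}. Concretely, I would show that, for every $\gamma\in\mathbb{R}$,
\begin{equation}\label{eq:GE-limit-TSS-plan}
\lim_{t\to\infty}\frac{1}{t}\log\tilde{F}_k^{\nu,\mu}(e^\gamma,t)=\tilde{\Lambda}_{\nu,\mu}(\gamma).
\end{equation}
The starting point is the explicit formula for $\tilde{F}_k^{\nu,\mu}(z,t)$ in Proposition \ref{prop:pgf-TSS}, evaluated at $z=e^\gamma$. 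Recall from \eqref{eq:eigenvalues} that $\hat{h}_+(e^\gamma)=\Lambda(\gamma)$ and that $\hat{h}_-(e^\gamma)<0$ for all $\gamma$. Hence the two exponential terms are $e^{-t((\mu-\Lambda(\gamma))^\nu-\mu^\nu)}$ and $e^{-t((\mu-\hat{h}_-(e^\gamma))^\nu-\mu^\nu)}$, while the prefactors $z^k$, $\frac{1}{2}$ and $\frac{c_k(e^\gamma)}{2h(e^\gamma)}$ are all $t$-independent and therefore contribute nothing to $\frac{1}{t}\log(\cdot)$. When $\Lambda(\gamma)>\mu$ the generating function is $+\infty$, consistently with $\tilde{\Lambda}_{\nu,\mu}(\gamma)=\infty$ in that regime.

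The core of the computation is to identify which of the two exponents dominates. Since $\hat{h}_-(e^\gamma)<0\le\mu$, the term $(\mu-\hat{h}_-(e^\gamma))^\nu-\mu^\nu$ is strictly positive (using $\nu\in(0,1)$ and $\mu>0$), so $e^{-t((\mu-\hat{h}_-(e^\gamma))^\nu-\mu^\nu)}\to 0$ exponentially fast; this term can only lower the logarithmic growth rate. For the $\hat{h}_+$ term, when $\Lambda(\gamma)\le\mu$ the exponent equals $-t\,\Psi_{\nu,\mu}(\Lambda(\gamma))=-t((\mu-\Lambda(\gamma))^\nu-\mu^\nu)$, and this is precisely $t\,\tilde{\Lambda}_{\nu,\mu}(\gamma)$ with the sign convention of the statement. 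I would then invoke Lemma 1.2.15 in \cite{DemboZeitouni} (the ``largest term'' principle for logarithmic asymptotics of finite sums), taking care that the coefficient $1\pm\frac{c_k(e^\gamma)}{h(e^\gamma)}$ of the dominant term is nonzero so that it does not artificially kill the leading exponential; this is handled exactly as in the proof of Proposition \ref{prop:pgf-fractional}, noting $1+\frac{c_k(z)}{h(z)}>0$ for $z>0$. This establishes \eqref{eq:GE-limit-TSS-plan}.

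It then remains to check the hypotheses of the G\"{a}rtner Ellis Theorem for the limiting function $\tilde{\Lambda}_{\nu,\mu}$. Because $\mu>0$, the origin $\gamma=0$ lies in the interior of $\{\gamma:\Lambda(\gamma)<\mu\}=\{\gamma:\tilde{\Lambda}_{\nu,\mu}(\gamma)<\infty\}$ (here one uses that $\Lambda$ is continuous with $\Lambda(0)=0<\mu$, from \eqref{eq:def-Lambda} and $h(1)=\frac{\alpha_1+\alpha_2+\beta_1+\beta_2}{2}$ in Lemma \ref{lem:2-derivatives-Lambda-origin}); this is exactly the point where the tempering $\mu>0$ is essential, since for $\mu=0$ the function $\tilde{\Lambda}_{\nu,0}(\gamma)=-\Lambda(\gamma)^{?}$ would be finite only on the set where $\Lambda(\gamma)\le 0$, which need not have nonempty interior. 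On the interior of its effective domain $\tilde{\Lambda}_{\nu,\mu}$ is differentiable, being the composition $\gamma\mapsto\mu^\nu-(\mu-\Lambda(\gamma))^\nu$ of the smooth function $\Lambda$ (smooth since $h(e^\gamma)>0$) with the smooth function $u\mapsto\mu^\nu-u^\nu$ on $u=\mu-\Lambda(\gamma)>0$; it is lower semicontinuous and steep at the boundary of its domain, so the G\"{a}rtner Ellis Theorem applies and yields the LDP with speed $v_t=t$ and good rate function $\tilde{\Lambda}_{\nu,\mu}^*$. The main obstacle, as in the companion propositions, is not conceptual but the bookkeeping around Lemma 1.2.15: one must verify that the subdominant $\hat{h}_-$ term and the $t$-independent prefactors genuinely do not affect the exponential rate, and that essential smoothness/steepness of $\tilde{\Lambda}_{\nu,\mu}$ holds at $\gamma$ with $\Lambda(\gamma)\uparrow\mu$ (where $(\mu-\Lambda(\gamma))^\nu$ has infinite derivative, giving the steepness needed for the full LDP on all of $\mathbb{R}$). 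Routine details are omitted.
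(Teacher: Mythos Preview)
Your proposal is correct and follows essentially the same route as the paper: compute the limit $\lim_{t\to\infty}\frac{1}{t}\log\tilde{F}_k^{\nu,\mu}(e^\gamma,t)=\tilde{\Lambda}_{\nu,\mu}(\gamma)$ from the explicit formula in Proposition~\ref{prop:pgf-TSS} via $\hat{h}_+(e^\gamma)=\Lambda(\gamma)$, $\hat{h}_-(e^\gamma)<0$ and Lemma~1.2.15 in \cite{DemboZeitouni}, and then verify essential smoothness (finiteness near the origin, differentiability on the interior of the domain, and steepness at the boundary $\Lambda(\gamma)=\mu$ via $\tilde{\Lambda}_{\nu,\mu}^\prime(\gamma)=\nu(\mu-\Lambda(\gamma))^{\nu-1}\Lambda^\prime(\gamma)$) to apply the G\"{a}rtner Ellis Theorem. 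Your write-up is in fact somewhat more explicit than the paper's on two minor points (the positivity of the coefficient $1+\tfrac{c_k(z)}{h(z)}$ of the dominant term, and the role of $\mu>0$ in ensuring $0$ lies in the interior of the effective domain), but the argument is the same.
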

\begin{proof}
We want to apply the G\"{a}rtner Ellis Theorem and, for all
$\gamma\in\mathbb{R}$, we have to take the limit of
$\frac{1}{t}\log\tilde{F}_k^{\nu,\mu}(e^\gamma,t)$ (as
$\to\infty$). Obviously we consider the expression of the function
$\tilde{F}_k^{\nu,\mu}(z,t)$ in Proposition \ref{prop:pgf-TSS}.

Firstly we have
\begin{equation}\label{eq:GE-limit-TSS}
\lim_{t\to\infty}\frac{1}{t}\log\tilde{F}_k^{\nu,\mu}(e^\gamma,t)=\tilde{\Lambda}_{\nu,\mu}(\gamma)\
(\mbox{for all}\ \gamma\in\mathbb{R});
\end{equation}
this can be checked noting that $\hat{h}_-(z)<0$,
$\hat{h}_+(e^\gamma)=\Lambda(\gamma)$ (for all
$\gamma\in\mathbb{R}$), and by considering a suitable application
of Lemma 1.2.15 in \cite{DemboZeitouni}.

The function $\tilde{\Lambda}_{\nu,\mu}$ in the limit
\eqref{eq:GE-limit-TSS} is essentially smooth (see e.g. Definition
2.3.5 in \cite{DemboZeitouni}); in fact it is finite in a
neighborhood of the origin, differentiable in the interior of the
set
$\mathcal{D}:=\{\gamma\in\mathbb{R}:\tilde{\Lambda}_{\nu,\mu}(\gamma)<\infty\}$,
and steep (namely
$\tilde{\Lambda}_{\nu,\mu}^\prime(\gamma_n)\to\infty$ for every
sequence $\{\gamma_n:n\geq 1\}$ in the interior of $\mathcal{D}$
which converges to a boundary point of the interior of
$\mathcal{D}$) because, if $\gamma_0$ is such that
$\Lambda(\gamma_0)=\mu$, we have
$$\tilde{\Lambda}_{\nu,\mu}^\prime(\gamma)=\nu(\mu-\Lambda(\gamma))^{\nu-1}\Lambda^\prime(\gamma)\to\infty\ (\mbox{as}\ \gamma\to\gamma_0).$$
Then we can apply the G\"{a}rtner Ellis Theorem (in fact the
function $\tilde{\Lambda}_{\nu,\mu}$ is also lower
semi-continuous), and the desired LDP holds.
\end{proof}

In view of the next result on moderate deviations we compute
$\tilde{\Lambda}_{\nu,\mu}^{\prime\prime}(0)$. We remark that, if
we consider the function $\Psi_{\nu,\mu}$ in \eqref{eq:def-Psi},
we have
$\tilde{\Lambda}_{\nu,\mu}(\gamma)=\Psi_{\nu,\mu}(\Lambda(\gamma))$
(for all $\gamma\in\mathbb{R}$). Thus we have
$$\tilde{\Lambda}_{\nu,\mu}^\prime(\gamma)=\Psi_{\nu,\mu}^\prime(\Lambda(\gamma))\Lambda^\prime(\gamma),\
\tilde{\Lambda}_{\nu,\mu}^{\prime\prime}(\gamma)=\Psi_{\nu,\mu}^\prime(\Lambda(\gamma))\Lambda^{\prime\prime}(\gamma)
+\Psi_{\nu,\mu}^{\prime\prime}(\Lambda(\gamma))(\Lambda^\prime(\gamma))^2$$
and therefore (for the second equality see \eqref{eq:Psi-means}
and \eqref{eq:Psi-variances})
$$\tilde{\Lambda}_{\nu,\mu}^{\prime\prime}(0)=\Psi_{\nu,\mu}^\prime(0)\Lambda^{\prime\prime}(0)
+\Psi_{\nu,\mu}^{\prime\prime}(0)(\Lambda^\prime(0))^2=\nu\mu^{\nu-1}\Lambda^{\prime\prime}(0)-\nu(\nu-1)\mu^{\nu-2}(\Lambda^\prime(0))^2.$$
We remark that $\tilde{\Lambda}_{\nu,\mu}^{\prime\prime}(0)>0$ because $\Lambda^{\prime\prime}(0)>0$
(see Lemma \ref{lem:2-derivatives-Lambda-origin}) and $\mu>0$.

\begin{proposition}\label{prop:MD-TSS}
Assume that $\mu>0$. Let $\{a_t:t>0\}$ be such that $a_t\to 0$ and
$ta_t\to+\infty$ (as $t\to+\infty$). Then, for all
$k\in\mathbb{Z}$,
$\left\{P\left(\sqrt{ta_t}\frac{\tilde{X}^{\nu,\mu}(t)-\mathbb{E}[\tilde{X}^{\nu,\mu}(t)|\tilde{X}^{\nu,\mu}(0)=k]}{t}\in\cdot
\Big|\tilde{X}^{\nu,\mu}(0)=k\right):t>0\right\}$ satisfies the
LDP with speed function $v_t=\frac{1}{a_t}$ and good rate function
$J_{\nu,\mu}(y):=\frac{y^2}{2\tilde{\Lambda}_{\nu,\mu}^{\prime\prime}(0)}$.
\end{proposition}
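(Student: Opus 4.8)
The plan is to adapt, almost verbatim, the proof of Proposition \ref{prop:MD}, replacing the role of $F_k(\cdot,t)$ and $\Lambda$ there by $\tilde{F}_k^{\nu,\mu}(\cdot,t)$ (Proposition \ref{prop:pgf-TSS}) and $\tilde{\Lambda}_{\nu,\mu}$. Concretely, I would apply the G\"{a}rtner--Ellis Theorem by proving that, for every $\gamma\in\mathbb{R}$,
$$\lim_{t\to\infty}a_t\log\mathbb{E}\left[\exp\left(\frac{\gamma}{a_t}\sqrt{ta_t}\,\frac{\tilde{X}^{\nu,\mu}(t)-\mathbb{E}[\tilde{X}^{\nu,\mu}(t)\,|\,\tilde{X}^{\nu,\mu}(0)=k]}{t}\right)\Big|\tilde{X}^{\nu,\mu}(0)=k\right]=\frac{\gamma^2}{2}\,\tilde{\Lambda}_{\nu,\mu}^{\prime\prime}(0).$$
Since the limiting function $\gamma\mapsto\frac{\gamma^2}{2}\tilde{\Lambda}_{\nu,\mu}^{\prime\prime}(0)$ is finite and differentiable on all of $\mathbb{R}$ (here I use that $\tilde{\Lambda}_{\nu,\mu}^{\prime\prime}(0)>0$, established in the discussion preceding the statement), the theorem yields the LDP with speed $1/a_t$ and good rate function equal to its Legendre--Fenchel transform $\sup_{\gamma\in\mathbb{R}}\{\gamma y-\frac{\gamma^2}{2}\tilde{\Lambda}_{\nu,\mu}^{\prime\prime}(0)\}=\frac{y^2}{2\tilde{\Lambda}_{\nu,\mu}^{\prime\prime}(0)}=J_{\nu,\mu}(y)$.

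To evaluate the left-hand side I would first rewrite, exactly as in the proof of Proposition \ref{prop:MD},
$$a_t\log\mathbb{E}[\cdots]=a_t\left(\log\tilde{F}_k^{\nu,\mu}\!\left(e^{\gamma/\sqrt{ta_t}},t\right)-\frac{\gamma}{\sqrt{ta_t}}\,\mathbb{E}[\tilde{X}^{\nu,\mu}(t)\,|\,\tilde{X}^{\nu,\mu}(0)=k]\right).$$
Since $\gamma/\sqrt{ta_t}\to0$ and $\Lambda(0)=0<\mu$, for $t$ large the argument $e^{\gamma/\sqrt{ta_t}}$ lies in the range where Proposition \ref{prop:pgf-TSS} gives a finite value; recalling $\hat{h}_+(e^\gamma)=\Lambda(\gamma)$ and $\hat{h}_-(z)<0$, the branch coming from $\hat{h}_-$ decays exponentially in $t$ (its rate stays bounded away from $0$ as $z\to1$) and is negligible against the $\hat{h}_+$ branch, while $c_k(z)/h(z)\to c_k(1)/h(1)=1$, so that
$$\log\tilde{F}_k^{\nu,\mu}\!\left(e^{\gamma/\sqrt{ta_t}},t\right)=\frac{k\gamma}{\sqrt{ta_t}}+t\,\tilde{\Lambda}_{\nu,\mu}\!\left(\frac{\gamma}{\sqrt{ta_t}}\right)+o(1).$$
For the mean, conditioning on $\tilde{S}^{\nu,\mu}(t)$ and using Proposition \ref{prop:mean-variance} together with \eqref{eq:Psi-means} gives $\mathbb{E}[\tilde{X}^{\nu,\mu}(t)\,|\,\tilde{X}^{\nu,\mu}(0)=k]=k+\Lambda^\prime(0)\nu\mu^{\nu-1}t+O(1)=k+\tilde{\Lambda}_{\nu,\mu}^\prime(0)t+O(1)$, the $O(1)$ coming from the bounded exponential correction term in Proposition \ref{prop:mean-variance}.

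Substituting these into the decomposition and collecting the terms in $\gamma/\sqrt{ta_t}$, everything reduces (up to a remainder that vanishes because $a_t\cdot\frac{\gamma}{\sqrt{ta_t}}\cdot O(1)=O(\sqrt{a_t/t})\to0$) to
$$ta_t\left(\tilde{\Lambda}_{\nu,\mu}\!\left(\frac{\gamma}{\sqrt{ta_t}}\right)-\frac{\gamma}{\sqrt{ta_t}}\,\tilde{\Lambda}_{\nu,\mu}^\prime(0)\right);$$
a second order Taylor formula for $\tilde{\Lambda}_{\nu,\mu}$ at the origin --- which is legitimate because $\tilde{\Lambda}_{\nu,\mu}=\Psi_{\nu,\mu}\circ\Lambda$ is $C^2$ near $0$ when $\mu>0$, with $\tilde{\Lambda}_{\nu,\mu}(0)=0$ --- then yields exactly $\frac{\gamma^2}{2}\tilde{\Lambda}_{\nu,\mu}^{\prime\prime}(0)$ in the limit, as desired. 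I would stress why $\mu>0$ is needed: it ensures $\Lambda(e^{\gamma/\sqrt{ta_t}})<\mu$ eventually, i.e. that the relevant moment generating functions are finite in a neighbourhood of the origin (the light-tailed condition required by G\"{a}rtner--Ellis), which typically fails when $\mu=0$.

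The main obstacle is bookkeeping rather than conceptual, just as for Proposition \ref{prop:MD}: one must check carefully that the pieces which individually diverge --- the $t\,\tilde{\Lambda}_{\nu,\mu}^\prime(0)\,\gamma/\sqrt{ta_t}$-type contributions in $\log\tilde{F}_k^{\nu,\mu}$ and in the recentering --- cancel exactly, and that the exponentially small contribution of the $\hat{h}_-$ branch in Proposition \ref{prop:pgf-TSS}, together with the factor $\log\frac12(1+c_k(z)/h(z))$, stays negligible uniformly along $z=e^{\gamma/\sqrt{ta_t}}\to1$. The remaining estimates are routine and would be omitted.
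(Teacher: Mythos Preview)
Your proposal is correct and follows essentially the same route as the paper's proof: the same decomposition into $a_t\bigl(\log\tilde{F}_k^{\nu,\mu}(e^{\gamma/\sqrt{ta_t}},t)-\frac{\gamma}{\sqrt{ta_t}}\mathbb{E}[\tilde{X}^{\nu,\mu}(t)\mid\tilde{X}^{\nu,\mu}(0)=k]\bigr)$, the same conditioning on $\tilde{S}^{\nu,\mu}(t)$ (with the bounded correction term from Proposition~\ref{prop:mean-variance}) to evaluate the mean, the same reduction via $\hat{h}_+(e^\gamma)=\Lambda(\gamma)$ and negligibility of the $\hat{h}_-$ branch, and the same second-order Taylor expansion of $\tilde{\Lambda}_{\nu,\mu}$ at the origin. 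One small slip: in your final paragraph you wrote $\Lambda(e^{\gamma/\sqrt{ta_t}})<\mu$ where you meant $\Lambda(\gamma/\sqrt{ta_t})<\mu$.
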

\begin{proof}
We apply the G\"{a}rtner Ellis Theorem. More precisely we show
that
\begin{equation}\label{eq:GE-limit-MD-TSS}
\lim_{t\to\infty}a_t\log\mathbb{E}\left[\exp\left(\frac{\gamma}{a_t}\sqrt{ta_t}\frac{\tilde{X}^{\nu,\mu}(t)
-\mathbb{E}[\tilde{X}^{\nu,\mu}(t)|\tilde{X}^{\nu,\mu}(0)=k]}{t}\right)\Big|\tilde{X}^{\nu,\mu}(0)=k\right]=
\frac{\gamma^2}{2}\tilde{\Lambda}_{\nu,\mu}^{\prime\prime}(0)\ (\mbox{for all}\
\gamma\in\mathbb{R});
\end{equation}
in fact we can easily check that
$J_{\nu,\mu}(y)=\sup_{\gamma\in\mathbb{R}}\left\{\gamma
y-\frac{\gamma^2}{2}\tilde{\Lambda}_{\nu,\mu}^{\prime\prime}(0)\right\}$ (for all
$y\in\mathbb{R}$).

We remark that
\begin{multline*}
a_t\log\mathbb{E}\left[\exp\left(\frac{\gamma}{a_t}\sqrt{ta_t}\frac{\tilde{X}^{\nu,\mu}(t)
	-\mathbb{E}[\tilde{X}^{\nu,\mu}(t)|\tilde{X}^{\nu,\mu}(0)=k]}{t}\right)\Big|\tilde{X}^{\nu,\mu}(0)=k\right]\\
=a_t\left(\log\mathbb{E}\left[\exp\left(\frac{\gamma}{\sqrt{ta_t}}\tilde{X}^{\nu,\mu}(t)\right)\Big|\tilde{X}^{\nu,\mu}(0)=k\right]
-\frac{\gamma}{\sqrt{ta_t}}\mathbb{E}[\tilde{X}^{\nu,\mu}(t)|\tilde{X}^{\nu,\mu}(0)=k]\right)\\
=a_t\left(\log\tilde{F}_k^{\nu,\mu}(e^{\gamma/\sqrt{ta_t}},t)
-\frac{\gamma}{\sqrt{ta_t}}\mathbb{E}[\tilde{X}^{\nu,\mu}(t)|\tilde{X}^{\nu,\mu}(0)=k]\right),
\end{multline*}
where $\tilde{F}_k^{\nu,\mu}(z,t)$ is the probability generating function
in Proposition \ref{prop:pgf-TSS}. Moreover, by Proposition 
\ref{prop:mean-variance} (together with a conditioning with respect to 
$\{\tilde{S}^{\nu,\mu}(t):t\geq 0\}$ and some properties of this process)
we have
$$\mathbb{E}[\tilde{X}^{\nu,\mu}(t)|\tilde{X}^{\nu,\mu}(0)=k]=
k+\Lambda^\prime(0)\mathbb{E}[\tilde{S}^{\nu,\mu}(t)]+\mathbb{E}[b(\tilde{S}^{\nu,\mu}(t))],$$
where
$b(r)=\left.\left(\frac{c_k(z)}{h(z)}\right)^\prime\right|_{z=1}\frac{1-e^{-(\alpha_1+\alpha_2+\beta_1+\beta_2)r}}{2}$
is a bounded function of $r\geq 0$; thus, by \eqref{eq:Psi-means}, we have
$$\mathbb{E}[\tilde{X}^{\nu,\mu}(t)|\tilde{X}^{\nu,\mu}(0)=k]=
k+\Lambda^\prime(0)\Psi_{\nu,\mu}^\prime(0)t+\mathbb{E}[b(\tilde{S}^{\nu,\mu}(t))].$$
Then, since $\hat{h}_-(e^\gamma)<0$ and $\hat{h}_+(e^\gamma)=\Lambda(\gamma)$ for all 
$\gamma\in\mathbb{R}$, we get
\begin{multline*}
\lim_{t\to\infty}a_t\left(\log\tilde{F}_k^{\nu,\mu}(e^{\gamma/\sqrt{ta_t}},t)
-\frac{\gamma}{\sqrt{ta_t}}\mathbb{E}[\tilde{X}^{\nu,\mu}(t)|\tilde{X}^{\nu,\mu}(0)=k]\right)\\
\lim_{t\to\infty}a_t\left(k\frac{\gamma}{\sqrt{ta_t}}+t\tilde{\Lambda}_{\nu,\mu}\left(\frac{\gamma}{\sqrt{ta_t}}\right)
-\frac{\gamma}{\sqrt{ta_t}}\left(k+\Lambda^\prime(0)\Psi_{\nu,\mu}^\prime(0)t+\mathbb{E}[b(\tilde{S}^{\nu,\mu}(t))]\right)\right)\\
=\lim_{t\to\infty}ta_t\left(\tilde{\Lambda}_{\nu,\mu}\left(\frac{\gamma}{\sqrt{ta_t}}\right)
-\frac{\gamma}{\sqrt{ta_t}}\Lambda^\prime(0)\Psi_{\nu,\mu}^\prime(0)\right);
\end{multline*}
in fact the term with $\mathbb{E}[b(\tilde{S}^{\nu,\mu}(t))]$ is negligible because it is
the function $b(\cdot)$ is bounded. Finally, if we consider the second order Taylor formula
for the function $\tilde{\Lambda}_{\nu,\mu}$, we have
\begin{multline*}
\tilde{\Lambda}_{\nu,\mu}\left(\frac{\gamma}{\sqrt{ta_t}}\right)
-\frac{\gamma}{\sqrt{ta_t}}\Lambda^\prime(0)\Psi_{\nu,\mu}^\prime(0)\\
=\frac{\gamma}{\sqrt{ta_t}}\tilde{\Lambda}_{\nu,\mu}^\prime(0)+\frac{\gamma^2}{2ta_t}\tilde{\Lambda}_{\nu,\mu}^{\prime\prime}(0)
+o\left(\frac{\gamma^2}{ta_t}\right)-\frac{\gamma}{\sqrt{ta_t}}\Lambda^\prime(0)\Psi_{\nu,\mu}^\prime(0)
=\frac{\gamma^2}{2ta_t}\tilde{\Lambda}_{\nu,\mu}^{\prime\prime}(0)+o\left(\frac{\gamma^2}{ta_t}\right)
\end{multline*}
for a remainder $o\left(\frac{\gamma^2}{ta_t}\right)$ such that
$o\left(\frac{\gamma^2}{ta_t}\right)/\frac{\gamma^2}{ta_t}\to 0$,
and \eqref{eq:GE-limit-MD-TSS} can be easily checked.
\end{proof}	

\appendix
\section{State probabilities}\label{sec:pmf-expressions}
In this section we present some formulas for the state
probabilities \eqref{eq:pmf-notation},
\eqref{eq:pmf-notation-fractional} and
\eqref{eq:pmf-notation-TSS}. These formulas can be obtained by
extracting suitable coefficients of the probability generating
functions above; see Propositions \ref{prop:pgf},
\ref{prop:pgf-fractional} and \ref{prop:pgf-TSS}, respectively.
Here, as usual, binomial coefficients with negative arguments are
equal to zero. For each family of state probabilities we
distinguish two cases, and we introduce a suitable auxiliary
function: if $\alpha_1+\alpha_2\neq\beta_1+\beta_2$,
\begin{eqnarray*}
&& \hspace*{-1.5cm}
\vartheta^n_{r,s}(\alpha_1,\alpha_2,\beta_1,\beta_2):=\left(\frac{4
\alpha_2 \beta_2}{
(\alpha_1+\alpha_2-\beta_1-\beta_2)^2}\right)^{s-r}
\,\sum_{h=0}^{n-s+r} {n\choose h+s-r} \left(\frac{4 \alpha_1
\beta_2}{ (\alpha_1+\alpha_2-\beta_1-\beta_2)^2} \right)^{h}
\nonumber
\\
&& \hspace*{-0.8cm} \times \sum_{l=0}^{h} {h+s-r \choose l} {h+s-r
\choose h-l}  \left( \frac{\alpha_2 \beta_1}{\alpha_1 \beta_2}
\right)^{l};
\end{eqnarray*}
if $\alpha_1+\alpha_2=\beta_1+\beta_2$,
$$\eta^n_{r,s}(\alpha_1,\alpha_2,\beta_1,\beta_2):=\left(\frac{\alpha_2}{\alpha_1}\right)^{s-r}
(\alpha_1 \beta_2)^n \sum_{l=0}^{n-s+r} {n \choose l} {n \choose
s-r+l}  \left( \frac{\alpha_2 \beta_1}{\alpha_1 \beta_2}
\right)^{l}.$$

\begin{proposition}\label{prop:pmf-expressions}
Let $\{p_{k,n}(t):k,n\in\mathbb{Z},t\geq 0\}$ be as in
\eqref{eq:pmf-notation}.\\
(i) Assume that $\alpha_1+\alpha_2\neq\beta_1+\beta_2$. Then, for
all $s,r\in\mathbb{Z}$, we have the following four cases:
\begin{eqnarray*}
&& \hspace*{-1.2cm} p_{2r,2s}(t) = {\rm
e}^{-\frac{(\alpha_1+\alpha_2+\beta_1+\beta_2)}{2}\, t}
\sum_{n=|s-r|}^{+\infty} \left( \frac{\alpha_1 \beta_1}{\alpha_2
\beta_2}  \right)^{s-r}
\\
&& \hspace*{-1.2cm} \times \left[\frac{t^{2n}}{(2n)!}
\left(\frac{\beta_1+\beta_2-\alpha_1-\alpha_2}{2} \right)^{2
n}+\frac{t^{2n+1}}{(2n+1)!}
\left(\frac{\beta_1+\beta_2-\alpha_1-\alpha_2}{2} \right)^{2
n+1}\right] \cdot
\vartheta^n_{r,s}(\alpha_1,\alpha_2,\beta_1,\beta_2);
\end{eqnarray*}
\begin{eqnarray*}
&& \hspace*{-1.2cm} p_{2r,2s+1}(t) = {\rm
e}^{-\frac{(\alpha_1+\alpha_2+\beta_1+\beta_2)}{2}\, t} \left\{
\alpha_1 \sum_{n=|s-r|}^{+\infty}  \frac{t^{2n+1}}{(2n+1)!}
\left(\frac{\beta_1+\beta_2-\alpha_1-\alpha_2}{2} \right)^{2 n}
\left( \frac{\alpha_1 \beta_1}{\alpha_2 \beta_2}
\right)^{s-r}\!\!\cdot
\vartheta^n_{r,s}(\alpha_1,\alpha_2,\beta_1,\beta_2) \right.
\\
&& \hspace*{-1.2cm} \left. +\alpha_2 \sum_{n=|s-r+1|}^{+\infty}
\frac{t^{2n+1}}{(2n+1)!}
\left(\frac{\beta_1+\beta_2-\alpha_1-\alpha_2}{2} \right)^{2 n}
\left( \frac{\alpha_1 \beta_1}{\alpha_2 \beta_2}
\right)^{s-r+1}\!\!\cdot
\vartheta^n_{r,s+1}(\alpha_1,\alpha_2,\beta_1,\beta_2)\right\};
\end{eqnarray*}
\begin{eqnarray*}
&& \hspace*{-1.2cm} p_{2r+1,2s}(t) = {\rm
e}^{-\frac{(\alpha_1+\alpha_2+\beta_1+\beta_2)}{2}\, t} \left\{
\beta_2 \sum_{n=|s-r|}^{+\infty}  \frac{t^{2n+1}}{(2n+1)!}
\left(\frac{\beta_1+\beta_2-\alpha_1-\alpha_2}{2} \right)^{2 n}
\left( \frac{\alpha_1 \beta_1}{\alpha_2 \beta_2}
\right)^{s-r}\!\!\cdot
\vartheta^n_{r,s}(\beta_1,\beta_2,\alpha_1,\alpha_2) \right.
\\
&& \hspace*{-1.2cm} \left. +\beta_1 \sum_{n=|s-r-1|}^{+\infty}
\frac{t^{2n+1}}{(2n+1)!}
\left(\frac{\beta_1+\beta_2-\alpha_1-\alpha_2}{2} \right)^{2 n}
\left( \frac{\alpha_1 \beta_1}{\alpha_2 \beta_2}
\right)^{s-r-1}\!\!\cdot
\vartheta^n_{r,s-1}(\beta_1,\beta_2,\alpha_1,\alpha_2)\right\};
\end{eqnarray*}
\begin{eqnarray*}
&& \hspace*{-2cm} p_{2r+1,2s+1}(t) = {\rm
e}^{-\frac{(\alpha_1+\alpha_2+\beta_1+\beta_2)}{2}\, t}
\sum_{n=|s-r|}^{+\infty} \left( \frac{\alpha_1 \beta_1}{\alpha_2
\beta_2}  \right)^{s-r}
\\
&& \hspace*{-2cm} \times \left[\frac{t^{2n}}{(2n)!}
\left(\frac{\beta_1+\beta_2-\alpha_1-\alpha_2}{2} \right)^{2
n}+\frac{t^{2n+1}}{(2n+1)!}
\left(\frac{\alpha_1+\alpha_2-\beta_1-\beta_2}{2} \right)^{2
n+1}\right] \cdot
\vartheta^n_{r,s}(\beta_1,\beta_2,\alpha_1,\alpha_2).
\end{eqnarray*}
(ii) Assume that $\alpha_1+\alpha_2=\beta_1+\beta_2$ Then, for all
$s,r\in\mathbb{Z}$, we have the following four cases:
$$p_{2r,2s}(t) = {\rm e}^{-{(\alpha_1+\alpha_2)}\, t}
\sum_{n=|s-r|}^{+\infty} \left( \frac{\alpha_1 \beta_1}{\alpha_2
\beta_2}  \right)^{s-r} \frac{t^{2n}}{(2n)!} \cdot
\eta^n_{r,s}(\alpha_1,\alpha_2,\beta_1,\beta_2),$$
\begin{eqnarray*}
&& \hspace*{-1.2cm} p_{2r,2s+1}(t) =  {\rm
e}^{-{(\alpha_1+\alpha_2)}\, t} \left\{ \alpha_1
\sum_{n=|s-r|}^{+\infty}  \frac{t^{2n+1}}{(2n+1)!}  \left(
\frac{\alpha_1 \beta_1}{\alpha_2 \beta_2}  \right)^{s-r}\!\!\cdot
\eta^n_{r,s}(\alpha_1,\alpha_2,\beta_1,\beta_2) \right.
\\
&& \hspace*{-1.2cm} \left. +\alpha_2 \sum_{n=|s-r+1|}^{+\infty}
\frac{t^{2n+1}}{(2n+1)!}  \left( \frac{\alpha_1 \beta_1}{\alpha_2
\beta_2}  \right)^{s-r+1}\!\!\cdot
\eta^n_{r,s+1}(\alpha_1,\alpha_2,\beta_1,\beta_2)\right\},
\end{eqnarray*}
\begin{eqnarray*}
&& \hspace*{-1.2cm} p_{2r+1,2s}(t) =  {\rm
e}^{-{(\alpha_1+\alpha_2)}\, t} \left\{ \beta_2
\sum_{n=|s-r|}^{+\infty}  \frac{t^{2n+1}}{(2n+1)!} \left(
\frac{\alpha_1 \beta_1}{\alpha_2 \beta_2}  \right)^{s-r}\!\!\cdot
\eta^n_{r,s}(\beta_1,\beta_2,\alpha_1,\alpha_2) \right.
\\
&& \hspace*{-1.2cm} \left. +\beta_1 \sum_{n=|s-r-1|}^{+\infty}
\frac{t^{2n+1}}{(2n+1)!}  \left( \frac{\alpha_1 \beta_1}{\alpha_2
\beta_2}  \right)^{s-r-1}\!\!\cdot
\eta^n_{r,s-1}(\beta_1,\beta_2,\alpha_1,\alpha_2)\right\},
\end{eqnarray*}
$$p_{2r+1,2s+1}(t) =  {\rm e}^{-{(\alpha_1+\alpha_2)}\, t}
\sum_{n=|s-r|}^{+\infty} \frac{t^{2n}}{(2n)!} \left(
\frac{\alpha_1 \beta_1}{\alpha_2 \beta_2}  \right)^{s-r} \!\!\cdot
\eta^n_{r,s}(\beta_1,\beta_2,\alpha_1,\alpha_2).$$
\end{proposition}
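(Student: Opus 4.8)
The plan is to obtain the state probabilities by extracting coefficients of the corresponding probability generating function: for $p_{k,n}(t)$ this means reading off the coefficient of $z^n$ in the Laurent expansion (in $z$) of $F_k(z,t)$ from Proposition \ref{prop:pgf}, equivalently the coefficient of $z^{n-k}$ in
$$e^{-\frac{\alpha_1+\alpha_2+\beta_1+\beta_2}{2}t}\left(\cosh\left(\frac{th(z)}{z}\right)+\frac{c_k(z)}{h(z)}\sinh\left(\frac{th(z)}{z}\right)\right).$$
The crucial point is that $h$ occurs only through $h(z)^2/z^2$, which by \eqref{eq:hz} is the Laurent polynomial
$$\frac{h(z)^2}{z^2}=\frac{D^2}{4}+\left(\alpha_1z+\frac{\alpha_2}{z}\right)\left(\beta_1z+\frac{\beta_2}{z}\right)=\alpha_1\beta_1z^2+\left(\frac{D^2}{4}+\alpha_1\beta_2+\alpha_2\beta_1\right)+\alpha_2\beta_2z^{-2},$$
with $D:=\alpha_1+\alpha_2-\beta_1-\beta_2$, and that $c_k(z)/z$ is also a Laurent polynomial (equal to $-D/2+\alpha_1z+\alpha_2z^{-1}$ for $k$ even, and obtained by exchanging $(\alpha_1,\alpha_2)\leftrightarrow(\beta_1,\beta_2)$ for $k$ odd, by \eqref{eq:coefficients}). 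Using $\cosh w=\sum_{m\ge0}w^{2m}/(2m)!$ and $\sinh w=\sum_{m\ge0}w^{2m+1}/(2m+1)!$ one gets
$$\cosh\!\left(\frac{th(z)}{z}\right)=\sum_{m\ge0}\frac{t^{2m}}{(2m)!}\left(\frac{h(z)^2}{z^2}\right)^{\!m},\qquad\frac{c_k(z)}{h(z)}\sinh\!\left(\frac{th(z)}{z}\right)=\frac{c_k(z)}{z}\sum_{m\ge0}\frac{t^{2m+1}}{(2m+1)!}\left(\frac{h(z)^2}{z^2}\right)^{\!m},$$
so the whole problem reduces to extracting the coefficient of a given power of $z$ from $(h(z)^2/z^2)^m$.

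The heart of the argument is therefore the identity: for $d\in\mathbb{Z}$ and $m\ge0$, the coefficient of $z^{2d}$ in $(h(z)^2/z^2)^m$ equals $(\alpha_1\beta_1/(\alpha_2\beta_2))^d\,(D/2)^{2m}\,\vartheta^m_{r,s}(\alpha_1,\alpha_2,\beta_1,\beta_2)$ when $d=s-r$ and $D\ne0$, and $(\alpha_1\beta_1/(\alpha_2\beta_2))^d\,\eta^m_{r,s}(\alpha_1,\alpha_2,\beta_1,\beta_2)$ when $D=0$. To prove it when $D\ne0$ I would apply the binomial theorem in the exponent $m$ to $\frac{D^2}{4}+(\alpha_1z+\alpha_2z^{-1})(\beta_1z+\beta_2z^{-1})$, expand each linear factor again by the binomial theorem, and collect the resulting triple sum of binomial coefficients by the monomials $(D^2/4)^{\,\cdot}(\alpha_1\beta_1)^d(\alpha_1\beta_2)^{\,\cdot}(\alpha_2\beta_1)^{\,\cdot}$; matching with the definition of $\vartheta^m_{r,s}$ then uses nothing beyond $\binom{N}{j}=\binom{N}{N-j}$. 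When $D=0$ the term $D^2/4$ disappears, $h(z)^2/z^2=(\alpha_1z+\alpha_2z^{-1})(\beta_1z+\beta_2z^{-1})$, and the same expansion collapses to the $\eta^m_{r,s}$ expression. The lower summation limit $n\ge|s-r|$ in the statement is forced because at least $|s-r|$ factors of $z^{\pm2}$ are needed to reach $z^{2(s-r)}$, and the convention that binomial coefficients with negative arguments vanish truncates the inner sums correctly at the boundary (and for $s-r<0$).

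Finally I would assemble the four parity cases. For $p_{2r,2s}$ and $p_{2r+1,2s+1}$ the target exponent $n-k$ is even, so only the $\cosh$ term (producing the $t^{2n}/(2n)!$ part) and the constant $\mp D/2$ of $c_k(z)/z$ (producing the $t^{2n+1}/(2n+1)!$ part, with sign $-$ for $k$ even and $+$ for $k$ odd, hence the factors $(\pm(\beta_1+\beta_2-\alpha_1-\alpha_2)/2)^{2n+1}$) contribute. For $p_{2r,2s+1}$ and $p_{2r+1,2s}$ the target exponent is odd, the $\cosh$ term drops out, and the monomials $\alpha_1z,\alpha_2z^{-1}$ (resp. $\beta_1z,\beta_2z^{-1}$) in $c_k(z)/z$ shift the exponent by $\pm1$, producing the two sub-sums with $\vartheta^n_{r,s}$ and $\vartheta^n_{r,s+1}$ (resp. $\vartheta^n_{r,s}$ and $\vartheta^n_{r,s-1}$) weighted by $\alpha_1,\alpha_2$ (resp. $\beta_2,\beta_1$). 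The cases with $k$ odd follow from those with $k$ even via the exchange $(\alpha_1,\alpha_2)\leftrightarrow(\beta_1,\beta_2)$ of Remark \ref{rem:exchange-parameters}, which accounts for the arguments $(\beta_1,\beta_2,\alpha_1,\alpha_2)$ of $\vartheta$ and $\eta$ there. The main obstacle is precisely the combinatorial bookkeeping in the identity above — choosing the summation indices so that the triple binomial sum from the expansion is recognised as the double-sum definition of $\vartheta^n_{r,s}$ (and $\eta^n_{r,s}$) — together with tracking the index shifts and the $\alpha/\beta$-exchange across the four cases; the remaining steps are routine computations.
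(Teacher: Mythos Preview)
Your proposal is correct and follows exactly the route indicated by the paper: the paper states only that ``these formulas can be obtained by extracting suitable coefficients of the probability generating functions'' (referring to Proposition~\ref{prop:pgf}) and gives no further details, so your expansion of $\cosh$ and $\sinh$ in powers of $h(z)^2/z^2$, the Laurent-polynomial identification of $h(z)^2/z^2$ and $c_k(z)/z$, and the binomial bookkeeping to match the definitions of $\vartheta^n_{r,s}$ and $\eta^n_{r,s}$ is precisely what the paper leaves to the reader. Your outline in fact supplies considerably more detail than the paper itself.
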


\begin{remark}\label{rem:link-with-DIM}
If $\alpha_1=\alpha_2=\lambda$ and $\beta_1=\beta_2=\mu$, then
Proposition \ref{prop:pmf-expressions} coincides with Proposition
1 in \cite{DicrescenzoIulianoMartinucci} and corrects a misprint
contained in formula (18).
\end{remark}

In view of the next propositions we recall the definition of the
generalized Fox-Wright function (see e.g. (1.11.14) in
\cite{KilbasSrivastavaTrujillo}). We have
\begin{equation}\label{ppsiq}
{}_{p}\psi_{q} \left[\begin{array}{cc}
(a_l,\alpha_l)_{1,p} &  \\
                                    & ; z\\
(b_l,\beta_l)_{1,q}  &
\end{array}\right]
=\sum_{n=0}^{+\infty}\frac{z^n}{n!} \frac{\prod_{j=1}^p
\Gamma(a_j+\alpha_j n)} {\prod_{l=1}^q \Gamma(b_l+\beta_l n)},
\end{equation}
where $z,a_j,b_l\in {\mathbb{C}}$ and $\alpha_j,\beta_l\in
{\mathbb{R}}$.

\begin{proposition}\label{prop:pmf-expressions-fractional}
Let $\{p_{k,n}^\nu(t):k,n\in\mathbb{Z},t\geq 0\}$ be as in
\eqref{eq:pmf-notation-fractional}.\\
(i) Assume that $\alpha_1+\alpha_2\neq\beta_1+\beta_2$. Then, for
all $s,r\in\mathbb{Z}$, we have the following four cases:
\begin{eqnarray*}
&& \hspace*{-1.2cm} p^{\nu}_{2r,2s}(t) =\sum_{k=|s-r|}^{+\infty}
\left( \frac{\alpha_1 \beta_1}{\alpha_2 \beta_2}  \right)^{s-r}
\vartheta^k_{r,s}(\alpha_1,\alpha_2,\beta_1,\beta_2)
 \left(\frac{\beta_1+\beta_2-\alpha_1-\alpha_2}{2} \right)^{2 k}
\\
&& \hspace*{-1.2cm} \times \left\{ \frac{t^{2 k \nu}
(\alpha_1+\alpha_2-\beta_1-\beta_2)}{
(\alpha_1+\alpha_2+\beta_1+\beta_2) (2k+1)!} \,{}_{2}\psi_{2}
\left[\begin{array}{ccc}
(2k+1,2)\!\! & \!\! (1,1) & \\
&                                                                   & ; \frac{[t^\nu (\alpha_1+\alpha_2+\beta_1+\beta_2)]^2}{4} \\
(0,2)\!\! & \!\!(2 k \nu+1,2 \nu) &
\end{array}\right]
\right.
\\
&& \hspace*{-1.2cm} \left. +\frac{t^{2 k \nu}}{(2k)!}
\,{}_{2}\psi_{2} \left[\begin{array}{ccc}
(2k+1,2)\!\! &\!\! (1,1) & \\
&                                                                   & ; \frac{[t^\nu (\alpha_1+\alpha_2+\beta_1+\beta_2)]^2}{4} \\
(1,2)\!\! &\!\! (2 k \nu+1,2 \nu) &
\end{array}\right]
\right.
\\
&& \hspace*{-1.2cm} \left. -\frac{t^{(2 k+1) \nu}
(\alpha_1+\alpha_2-\beta_1-\beta_2)}{2 (2k+1)!} \,{}_{2}\psi_{2}
\left[\begin{array}{ccc}
(2k+2,2)\!\! & \!\! (1,1) & \\
&                                                                   & ; \frac{[t^\nu (\alpha_1+\alpha_2+\beta_1+\beta_2)]^2}{4} \\
(1,2)\!\! & \!\!((2 k+1) \nu+1,2 \nu) &
\end{array}\right]
\right.
\\
&& \hspace*{-1.2cm} \left. -\frac{t^{(2 k+1) \nu}
(\alpha_1+\alpha_2+\beta_1+\beta_2)}{2 (2k)!} \,{}_{2}\psi_{2}
\left[\begin{array}{ccc}
(2k+2,2)\!\! & \!\! (1,1) & \\
&                                                                   & ;\frac{[t^\nu (\alpha_1+\alpha_2+\beta_1+\beta_2)]^2}{4} \\
(2,2)\!\! & \!\!((2 k+1) \nu+1,2 \nu) &
\end{array}\right]
\right\};
\end{eqnarray*}
\begin{eqnarray*}
&& \hspace*{-1.2cm} p^{\nu}_{2r,2s+1}(t) = \alpha_1
\sum_{k=|s-r|}^{+\infty} \left( \frac{\alpha_1 \beta_1}{\alpha_2
\beta_2}  \right)^{s-r}
\vartheta^k_{r,s}(\alpha_1,\alpha_2,\beta_1,\beta_2)
 \left(\frac{\beta_1+\beta_2-\alpha_1-\alpha_2}{2} \right)^{2 k}
\\
&& \hspace*{-1.2cm} \times \left\{ \frac{t^{(2 k+1) \nu}} {
(2k+1)!} \,{}_{2}\psi_{2} \left[\begin{array}{ccc}
(2k+2,2)\!\! & \!\! (1,1) & \\
&                                                                   & ; \frac{[t^\nu (\alpha_1+\alpha_2+\beta_1+\beta_2)]^2}{4} \\
(1,2)\!\! & \!\!((2 k+1) \nu+1,2 \nu) &
\end{array}\right]
\right.
\\
&& \hspace*{-1.2cm} \left. -\frac{2 t^{2 k
\nu}}{(\alpha_1+\alpha_2+\beta_1+\beta_2)(2k+1)!} \,{}_{2}\psi_{2}
\left[\begin{array}{ccc}
(2k+1,2)\!\! &\!\! (1,1) & \\
&                                                                   & ; \frac{[t^\nu (\alpha_1+\alpha_2+\beta_1+\beta_2)]^2}{4} \\
(0,2)\!\! &\!\! (2 k \nu+1,2 \nu) &
\end{array}\right]
\right\}
\\
&& \hspace*{-1.2cm} +\alpha_2 \sum_{k=|s-r+1|}^{+\infty} \left(
\frac{\alpha_1 \beta_1}{\alpha_2 \beta_2}  \right)^{s-r+1}
\vartheta^k_{r,s+1}(\alpha_1,\alpha_2,\beta_1,\beta_2)
 \left(\frac{\beta_1+\beta_2-\alpha_1-\alpha_2}{2} \right)^{2 k}
\\
&& \hspace*{-1.2cm} \times \left\{ \frac{t^{(2 k+1) \nu}}{(2k+1)!}
\,{}_{2}\psi_{2} \left[\begin{array}{ccc}
(2k+2,2)\!\! & \!\! (1,1) & \\
&                                                                   & ; \frac{[t^\nu (\alpha_1+\alpha_2+\beta_1+\beta_2)]^2}{4} \\
(1,2)\!\! & \!\!((2 k+1) \nu+1,2 \nu) &
\end{array}\right]
\right.
\\
&& \hspace*{-1.2cm} \left. -\frac{2 t^{2 k \nu}
}{(\alpha_1+\alpha_2+\beta_1+\beta_2) (2k+1)!} \,{}_{2}\psi_{2}
\left[\begin{array}{ccc}
(2k+1,2)\!\! & \!\! (1,1) & \\
&                                                                   & ; \frac{[t^\nu (\alpha_1+\alpha_2+\beta_1+\beta_2)]^2}{4} \\
(0,2)\!\! & \!\!(2 k \nu+1,2 \nu) &
\end{array}\right]
\right\};
\end{eqnarray*}
\begin{eqnarray*}
&& \hspace*{-1.2cm} p^{\nu}_{2r+1,2s}(t) = \beta_2
\sum_{k=|s-r|}^{+\infty} \left( \frac{\alpha_1 \beta_1}{\alpha_2
\beta_2}  \right)^{s-r}
\vartheta^k_{r,s}(\beta_1,\beta_2,\alpha_1,\alpha_2)
 \left(\frac{\alpha_1+\alpha_2-\beta_1-\beta_2}{2} \right)^{2 k}
\\
&& \hspace*{-1.2cm} \times \left\{ \frac{t^{(2 k+1) \nu}} {
(2k+1)!} \,{}_{2}\psi_{2} \left[\begin{array}{ccc}
(2k+2,2)\!\! & \!\! (1,1) & \\
&                                                                   & ; \frac{[t^\nu (\alpha_1+\alpha_2+\beta_1+\beta_2)]^2}{4} \\
(1,2)\!\! & \!\!((2 k+1) \nu+1,2 \nu) &
\end{array}\right]
\right.
\\
&& \hspace*{-1.2cm} \left. -\frac{2 t^{2 k
\nu}}{(\alpha_1+\alpha_2+\beta_1+\beta_2)(2k+1)!} \,{}_{2}\psi_{2}
\left[\begin{array}{ccc}
(2k+1,2)\!\! &\!\! (1,1) & \\
&                                                                   & ; \frac{[t^\nu (\alpha_1+\alpha_2+\beta_1+\beta_2)]^2}{4} \\
(0,2)\!\! &\!\! (2 k \nu+1,2 \nu) &
\end{array}\right]
\right\}
\\
&& \hspace*{-1.2cm} +\beta_1 \sum_{k=|s-r-1|}^{+\infty} \left(
\frac{\alpha_1 \beta_1}{\alpha_2 \beta_2}  \right)^{s-r-1}
\vartheta^k_{r,s-1}(\beta_1,\beta_2,\alpha_1,\alpha_2)
 \left(\frac{\alpha_1+\alpha_2-\beta_1-\beta_2}{2} \right)^{2 k}
\\
&& \hspace*{-1.2cm} \times \left\{ \frac{t^{(2 k+1) \nu}}{(2k+1)!}
\,{}_{2}\psi_{2} \left[\begin{array}{ccc}
(2k+2,2)\!\! & \!\! (1,1) & \\
&                                                                   & ; \frac{[t^\nu (\alpha_1+\alpha_2+\beta_1+\beta_2)]^2}{4} \\
(1,2)\!\! & \!\!((2 k+1) \nu+1,2 \nu) &
\end{array}\right]
\right.
\\
&& \hspace*{-1.2cm} \left. -\frac{2 t^{2 k \nu}
}{(\alpha_1+\alpha_2+\beta_1+\beta_2) (2k+1)!} \,{}_{2}\psi_{2}
\left[\begin{array}{ccc}
(2k+1,2)\!\! & \!\! (1,1) & \\
&                                                                   & ; \frac{[t^\nu (\alpha_1+\alpha_2+\beta_1+\beta_2)]^2}{4} \\
(0,2)\!\! & \!\!(2 k \nu+1,2 \nu) &
\end{array}\right]
\right\};
\end{eqnarray*}
\begin{eqnarray*}
&& \hspace*{-1.2cm} p^{\nu}_{2r+1,2s+1}(t) =
\sum_{k=|s-r|}^{+\infty} \left( \frac{\alpha_1 \beta_1}{\alpha_2
\beta_2}  \right)^{s-r}
\vartheta^k_{r,s}(\beta_1,\beta_2,\alpha_1,\alpha_2)
 \left(\frac{\alpha_1+\alpha_2-\beta_1-\beta_2}{2} \right)^{2 k}\\
&& \hspace*{-1.2cm} \times \left\{- \frac{t^{2 k \nu}
(\alpha_1+\alpha_2-\beta_1-\beta_2)}{
(\alpha_1+\alpha_2+\beta_1+\beta_2) (2k+1)!} \,{}_{2}\psi_{2}
\left[\begin{array}{ccc}
(2k+1,2)\!\! & \!\! (1,1) & \\
&                                                                   & ; \frac{[t^\nu (\alpha_1+\alpha_2+\beta_1+\beta_2)]^2}{4} \\
(0,2)\!\! & \!\!(2 k \nu+1,2 \nu) &
\end{array}\right]
\right.
\\
&& \hspace*{-1.2cm} \left. +\frac{t^{2 k \nu}}{(2k)!}
\,{}_{2}\psi_{2} \left[\begin{array}{ccc}
(2k+1,2)\!\! &\!\! (1,1) & \\
&                                                                   & ; \frac{[t^\nu (\alpha_1+\alpha_2+\beta_1+\beta_2)]^2}{4} \\
(1,2)\!\! &\!\! (2 k \nu+1,2 \nu) &
\end{array}\right]
\right.
\\
&& \hspace*{-1.2cm} \left. -\frac{t^{(2 k+1) \nu}
(\alpha_1+\alpha_2+\beta_1+\beta_2)}{2 (2k)!} \,{}_{2}\psi_{2}
\left[\begin{array}{ccc}
(2k+2,2)\!\! & \!\! (1,1) & \\
&                                                                   & ; \frac{[t^\nu (\alpha_1+\alpha_2+\beta_1+\beta_2)]^2}{4} \\
(2,2)\!\! & \!\!((2 k+1) \nu+1,2 \nu) &
\end{array}\right]
\right.
\\
&& \hspace*{-1.2cm} \left. +\frac{t^{(2 k+1) \nu}
(\alpha_1+\alpha_2-\beta_1-\beta_2)}{2 (2k+1)!} \,{}_{2}\psi_{2}
\left[\begin{array}{ccc}
(2k+2,2)\!\! & \!\! (1,1) & \\
&                                                                   & ; \frac{[t^\nu (\alpha_1+\alpha_2+\beta_1+\beta_2)]^2}{4} \\
(1,2)\!\! & \!\!((2 k+1) \nu+1,2 \nu) &
\end{array}\right]
\right\}.
\end{eqnarray*}
(ii) Assume that $\alpha_1+\alpha_2=\beta_1+\beta_2$. Then, for
all $s,r\in\mathbb{Z}$, we have the following four cases:
\begin{eqnarray*}
&& \hspace*{-1.2cm} p^{\nu}_{2r,2s}(t)=\sum_{k=|s-r|}^{+\infty}
\left( \frac{\alpha_1 \beta_1}{\alpha_2 \beta_2}  \right)^{s-r}
\eta^k_{r,s}(\alpha_1,\alpha_2,\beta_1,\beta_2)
\\
&& \hspace*{-1.2cm} \times \left\{ \frac{t^{2 k \nu}}{(2k)!}
\,{}_{2}\psi_{2} \left[\begin{array}{ccc}
(2k+1,2)\!\! & \!\! (1,1) & \\
&                                                                   & ; [t^\nu (\alpha_1+\alpha_2)]^2 \\
(1,2)\!\! & \!\!(2 k \nu+1,2 \nu) &
\end{array}\right]
\right.
\\
&& \hspace*{-1.2cm} \left. -(\alpha_1+\alpha_2) \frac{t^{(2 k+1)
\nu}}{(2k)!} \,{}_{2}\psi_{2} \left[\begin{array}{ccc}
(2k+2,2)\!\! &\!\! (1,1) & \\
&                                                                   & ;  [t^\nu (\alpha_1+\alpha_2)]^2 \\
(2,2)\!\! &\!\! ((2 k+1) \nu+1,2 \nu) &
\end{array}\right]
\right\};
\end{eqnarray*}
\begin{eqnarray*}
&& \hspace*{-1.2cm} p^{\nu}_{2r,2s+1}(t) =\alpha_1
\sum_{k=|s-r|}^{+\infty} \left( \frac{\alpha_1 \beta_1}{\alpha_2
\beta_2}  \right)^{s-r}
\eta^k_{r,s}(\alpha_1,\alpha_2,\beta_1,\beta_2)
\\
&& \hspace*{-1.2cm} \times \left\{ \frac{t^{(2 k+1) \nu}} {
(2k+1)!} \,{}_{2}\psi_{2} \left[\begin{array}{ccc}
(2k+2,2)\!\! & \!\! (1,1) & \\
&                                                                   & ;  [t^\nu (\alpha_1+\alpha_2)]^2\\
(1,2)\!\! & \!\!((2 k+1) \nu+1,2 \nu) &
\end{array}\right]
\right.
\\
&& \hspace*{-1.2cm} \left. -\frac{t^{2 k
\nu}}{(\alpha_1+\alpha_2)(2k+1)!} \,{}_{2}\psi_{2}
\left[\begin{array}{ccc}
(2k+1,2)\!\! &\!\! (1,1) & \\
&                                                                   & ;  [t^\nu (\alpha_1+\alpha_2)]^2\\
(0,2)\!\! &\!\! (2 k \nu+1,2 \nu) &
\end{array}\right]
\right\}
\\
&& \hspace*{-1.2cm} +\alpha_2 \sum_{k=|s-r+1|}^{+\infty} \left(
\frac{\alpha_1 \beta_1}{\alpha_2 \beta_2}  \right)^{s-r+1}
\eta^k_{r,s+1}(\alpha_1,\alpha_2,\beta_1,\beta_2)
\\
&& \hspace*{-1.2cm} \times \left\{ \frac{t^{(2 k+1) \nu}}{(2k+1)!}
\,{}_{2}\psi_{2} \left[\begin{array}{ccc}
(2k+2,2)\!\! & \!\! (1,1) & \\
&                                                                   & ;  [t^\nu (\alpha_1+\alpha_2)]^2 \\
(1,2)\!\! & \!\!((2 k+1) \nu+1,2 \nu) &
\end{array}\right]
\right.
\\
&& \hspace*{-1.2cm} \left. -\frac{t^{2 k \nu}
}{(\alpha_1+\alpha_2) (2k+1)!} \,{}_{2}\psi_{2}
\left[\begin{array}{ccc}
(2k+1,2)\!\! & \!\! (1,1) & \\
&                                                                   & ; [t^\nu (\alpha_1+\alpha_2)]^2 \\
(0,2)\!\! & \!\!(2 k \nu+1,2 \nu) &
\end{array}\right]
\right\};
\end{eqnarray*}

\begin{eqnarray*}
&& \hspace*{-1.2cm} p^{\nu}_{2r+1,2s}(t) = \beta_2
\sum_{k=|s-r|}^{+\infty} \left( \frac{\alpha_1 \beta_1}{\alpha_2
\beta_2}  \right)^{s-r}
\eta^k_{r,s}(\beta_1,\beta_2,\alpha_1,\alpha_2)
\\
&& \hspace*{-1.2cm} \times \left\{ \frac{t^{(2 k+1) \nu}} {
(2k+1)!} \,{}_{2}\psi_{2} \left[\begin{array}{ccc}
(2k+2,2)\!\! & \!\! (1,1) & \\
&                                                                   & ;  [t^\nu (\alpha_1+\alpha_2)]^2\\
(1,2)\!\! & \!\!((2 k+1) \nu+1,2 \nu) &
\end{array}\right]
\right.
\\
&& \hspace*{-1.2cm} \left. -\frac{ t^{2 k
\nu}}{(\alpha_1+\alpha_2)(2k+1)!} \,{}_{2}\psi_{2}
\left[\begin{array}{ccc}
(2k+1,2)\!\! &\!\! (1,1) & \\
&                                                                   & ; [t^\nu (\alpha_1+\alpha_2)]^2 \\
(0,2)\!\! &\!\! (2 k \nu+1,2 \nu) &
\end{array}\right]
\right\}
\\
&& \hspace*{-1.2cm} +\beta_1 \sum_{k=|s-r-1|}^{+\infty} \left(
\frac{\alpha_1 \beta_1}{\alpha_2 \beta_2}  \right)^{s-r-1}
\eta^k_{r,s-1}(\beta_1,\beta_2,\alpha_1,\alpha_2)
\\
&& \hspace*{-1.2cm} \times \left\{ \frac{t^{(2 k+1) \nu}}{(2k+1)!}
\,{}_{2}\psi_{2} \left[\begin{array}{ccc}
(2k+2,2)\!\! & \!\! (1,1) & \\
&                                                                   & ; [t^\nu (\alpha_1+\alpha_2)]^2\\
(1,2)\!\! & \!\!((2 k+1) \nu+1,2 \nu) &
\end{array}\right]
\right.
\\
&& \hspace*{-1.2cm} \left. -\frac{t^{2 k \nu}
}{(\alpha_1+\alpha_2) (2k+1)!} \,{}_{2}\psi_{2}
\left[\begin{array}{ccc}
(2k+1,2)\!\! & \!\! (1,1) & \\
&                                                                   & ;  [t^\nu (\alpha_1+\alpha_2)]^2\\
(0,2)\!\! & \!\!(2 k \nu+1,2 \nu) &
\end{array}\right]
\right\};
\end{eqnarray*}
\begin{eqnarray*}
&& \hspace*{-1.2cm}
p^{\nu}_{2r+1,2s+1}(t) =  \sum_{k=|s-r|}^{+\infty} \left( \frac{\alpha_1 \beta_1}{\alpha_2 \beta_2}  \right)^{s-r} \eta^k_{r,s}(\beta_1,\beta_2,\alpha_1,\alpha_2) \\
&& \hspace*{-1.2cm} \times \left\{\frac{t^{2 k \nu}}{ (2k)!}
\,{}_{2}\psi_{2} \left[\begin{array}{ccc}
(2k+1,2)\!\! & \!\! (1,1) & \\
&                                                                   & ;  [t^\nu (\alpha_1+\alpha_2)]^2\\
(1,2)\!\! & \!\!(2 k \nu+1,2 \nu) &
\end{array}\right]
\right.
\\
&& \hspace*{-1.2cm} \left. -\frac{t^{(2 k+1) \nu}
(\alpha_1+\alpha_2)}{(2k)!} \,{}_{2}\psi_{2}
\left[\begin{array}{ccc}
(2k+2,2)\!\! &\!\! (1,1) & \\
&                                                                   & ;  [t^\nu (\alpha_1+\alpha_2)]^2\\
(2,2)\!\! &\!\! ((2 k+1) \nu+1,2 \nu) &
\end{array}\right]
\right\}.
\end{eqnarray*}
\end{proposition}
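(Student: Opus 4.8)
The plan is to read off $p_{k,n}^\nu(t)$ as the coefficient of $z^n$ in the probability generating function $F_k^\nu(z,t)$ of Proposition \ref{prop:pgf-fractional} (recall $F_k^\nu(z,t)=\sum_n z^n p_{k,n}^\nu(t)$), and to convert the resulting $t$-series into generalized Fox--Wright functions. First I would expand each Mittag--Leffler function as a power series, $E_\nu(\hat{h}_\pm(z)t^\nu)=\sum_{j\geq 0}t^{\nu j}\hat{h}_\pm(z)^j/\Gamma(\nu j+1)$, and apply the binomial theorem to $\hat{h}_\pm(z)^j=\bigl(-\tfrac{\alpha_1+\alpha_2+\beta_1+\beta_2}{2}\pm h(z)/z\bigr)^j$. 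In the two combinations occurring in $F_k^\nu$, namely $\tfrac12\bigl(E_\nu(\hat{h}_+t^\nu)+E_\nu(\hat{h}_-t^\nu)\bigr)$ and $\tfrac{1}{h(z)}\cdot\tfrac12\bigl(E_\nu(\hat{h}_+t^\nu)-E_\nu(\hat{h}_-t^\nu)\bigr)$, only even, respectively odd, powers of $h(z)/z$ survive, and since $h(z)^2=\tilde{h}(z;\alpha_1,\alpha_2,\beta_1,\beta_2)/4$ by \eqref{eq:hz} is a genuine polynomial in $z$ all square roots disappear; one is left with honest Laurent series in $z$ whose coefficients are scalar series in $t$.

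Next I would interchange the order of summation so that the powers of $\tilde{h}(z)$ come outside. For each fixed power $\ell$ of $\tilde{h}(z)/4$ the remaining scalar series runs over the number $m$ of factors $-\tfrac{\alpha_1+\alpha_2+\beta_1+\beta_2}{2}$; splitting $m$ according to its parity, the $m$ even part becomes a series in $\tfrac{[t^\nu(\alpha_1+\alpha_2+\beta_1+\beta_2)]^2}{4}$ whose general term, after rewriting $\binom{2\ell+2a}{2\ell}$ via Gamma functions and using that the coefficient of $x^q$ in $E_\nu(x)$ is $1/\Gamma(\nu q+1)$, is exactly a term of a ${}_{2}\psi_{2}$ as in \eqref{ppsiq} with numerator parameters of the form $(2\ell+1,2),(1,1)$ or $(2\ell+2,2),(1,1)$ and denominator parameters chosen among $(0,2)$, $(1,2)$, $(2,2)$ and $(2\ell\nu+1,2\nu)$ or $((2\ell+1)\nu+1,2\nu)$; the $m$ odd part yields the companion ${}_{2}\psi_{2}$ with the shifted numerator parameter and an extra factor $-\tfrac{\alpha_1+\alpha_2+\beta_1+\beta_2}{2}$. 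Every series involved converges for all $t$ (the Mittag--Leffler series is entire, and each ${}_{2}\psi_{2}$ occurring here has $1+\sum\beta_l-\sum\alpha_j=2\nu>0$), so the interchanges are legitimate. Running the same computation on the $c_k(z)$-part of $F_k^\nu$ produces the remaining ${}_{2}\psi_{2}$'s, and a re-indexing $\ell\mapsto\ell+1$ in one of the resulting groups assembles the terms displayed in the statement. (Alternatively one may start from Proposition \ref{prop:pmf-expressions} and use $p_{k,n}^\nu(t)=\mathbb{E}[p_{k,n}(T^\nu(t))]$, which follows from \eqref{eq:time-change-representation-fractional} and independence, together with the moment formula $\mathbb{E}[(T^\nu(t))^q]=q!\,t^{\nu q}/\Gamma(\nu q+1)$; the same ${}_{2}\psi_{2}$ expressions arise after the same regrouping.)

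It then remains to extract the $z^n$-coefficient of the polynomial prefactor, that is, of $z^k(\tilde{h}(z)/4)^\ell$ and of $z^k c_k(z)(\tilde{h}(z)/4)^\ell$ with $c_k(z)$ as in \eqref{eq:coefficients}. When $\alpha_1+\alpha_2\neq\beta_1+\beta_2$ I would write $\tilde{h}(z)/4=\alpha_1\beta_1z^4+\bigl(\tfrac{(\alpha_1+\alpha_2-\beta_1-\beta_2)^2}{4}+\alpha_1\beta_2+\alpha_2\beta_1\bigr)z^2+\alpha_2\beta_2$ and expand this trinomial power by the usual double binomial sum; pulling out $\bigl(\tfrac{\beta_1+\beta_2-\alpha_1-\alpha_2}{2}\bigr)^{2\ell}$ and $(\alpha_1\beta_1/\alpha_2\beta_2)^{s-r}$ leaves precisely $\vartheta^\ell_{r,s}(\alpha_1,\alpha_2,\beta_1,\beta_2)$. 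When $\alpha_1+\alpha_2=\beta_1+\beta_2$ the same trinomial factors as $(\alpha_1z^2+\alpha_2)^\ell(\beta_1z^2+\beta_2)^\ell$, and the corresponding coefficient extraction is exactly the single product-of-binomials sum defining $\eta^\ell_{r,s}$. The four parity sub-cases are indexed by the parities of $k$ and $n$; the two cases with $k$ odd follow from those with $k$ even by the exchange $(\alpha_1,\alpha_2)\leftrightarrow(\beta_1,\beta_2)$, which simultaneously swaps the two branches of $c_k(z)$, exactly as in Remark \ref{rem:exchange-parameters}.

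The hard part will be the bookkeeping in these last two steps: controlling how the parity split of $m$ together with the re-indexing $\ell\mapsto\ell+1$ redistributes the ${}_{2}\psi_{2}$ terms, so that for instance the denominator parameter $(2,2)$ produced by one group is absorbed into the $(0,2)$ of the next, and verifying the combinatorial identities that identify the extracted polynomial coefficients with $\vartheta^\ell_{r,s}$ and $\eta^\ell_{r,s}$. These computations are lengthy but routine; in practice I would present them in detail only for $p_{2r,2s}^\nu(t)$ and $p_{2r,2s+1}^\nu(t)$ and deduce the cases with $k$ odd from the symmetry above.
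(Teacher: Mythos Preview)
Your proposal is correct and follows exactly the route the paper indicates: the appendix states only that ``these formulas can be obtained by extracting suitable coefficients of the probability generating functions above; see Propositions \ref{prop:pgf}, \ref{prop:pgf-fractional} and \ref{prop:pgf-TSS}, respectively,'' and gives no further details. Your outline---expanding the Mittag--Leffler functions, using the binomial theorem on $\hat{h}_\pm(z)^j$ so that only integer powers of $\tilde{h}(z)$ survive, regrouping into ${}_{2}\psi_{2}$ series, and then extracting the $z$-coefficients via the trinomial/product expansions that produce $\vartheta^\ell_{r,s}$ and $\eta^\ell_{r,s}$---is precisely the computation the paper has in mind, and your alternative via $p_{k,n}^\nu(t)=\mathbb{E}[p_{k,n}(T^\nu(t))]$ together with Proposition \ref{prop:pmf-expressions} is an equally valid shortcut.
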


\begin{remark}\label{rem:how-to-recover-the-nonfractional-case-fractional}
If $\nu=1$, then Proposition \ref{prop:pmf-expressions-fractional}
coincides with Proposition \ref{prop:pmf-expressions} noting that
$${}_{2}\psi_{2} \left[\begin{array}{ccc}
(\zeta_1,2) & (1,1) & \\
&                                                                   & ; z\\
(\omega_1,2) & (\zeta_1,2) &
\end{array}\right]=\left\{\begin{array}{ll}
\sqrt{z} \sinh({\sqrt{z})} &\ \mbox{if}\ \omega_1=0\\
\cosh({\sqrt{z})} &\ \mbox{if}\ \omega_1=1\\
\frac{\sinh({\sqrt{z})}}{\sqrt{z}}&\ \mbox{if}\ \omega_1=2.
\end{array}\right.$$
\end{remark}

We conclude with final proposition and we refer again to the
generalized Fox-Wright function in \eqref{ppsiq}.

\begin{proposition}\label{prop:pmf-expressions-TSS}
Let $\{\tilde{p}_{k,n}^{\nu,\mu}(t):k,n\in\mathbb{Z},t\geq 0\}$ be
as in \eqref{eq:pmf-notation-TSS}.\\
(i) Assume that $\alpha_1+\alpha_2\neq\beta_1+\beta_2$. Then, for
all $s,r\in\mathbb{Z}$, we have the following four cases:
\begin{eqnarray*}
&& \hspace*{-1cm} \tilde p^{\nu,\mu}_{2r,2 s}(t)={\rm e}^{\mu^\nu
t} \sum_{n=|s-r|}^{+\infty} \left(
\frac{\beta_1+\beta_2-\alpha_1-\alpha_2}{\alpha_1+\beta_1+\alpha_2+
\beta_2+2 \mu}  \right)^{2 n} \left( \frac{\alpha_1
\beta_1}{\alpha_2 \beta_2}  \right)^{s-r}
\vartheta^n_{r,s}(\alpha_1,\alpha_2,\beta_1,\beta_2)
\\
&& \hspace*{-1.2cm} \times  \left\{\frac{1}{(2n)!}
\,{}_{1}\psi_{1} \left[\begin{array}{cc}
(1,\nu)\!\!  & \\
                   & ; -t \left(\frac{\alpha_1+\beta_1+\alpha_2+\beta_2}{2}+\mu \right)^{\nu} \\
(1-2n,\nu)\!\! &
\end{array}\right]
\right.
\\
&& \hspace*{-1.2cm}  \left.
-\frac{\beta_1+\beta_2-\alpha_1-\alpha_2}{\alpha_1+\beta_1+\alpha_2+
\beta_2+2 \mu}
\frac{1}{(2n+1)!} \,{}_{1}\psi_{1} \left[\begin{array}{cc}
(1,\nu)\!\!  & \\
                   & ; -t \left(\frac{\alpha_1+\beta_1+\alpha_2+\beta_2}{2}+\mu \right)^{\nu} \\
(-2n,\nu)\!\! &
\end{array}\right]\right\};
\end{eqnarray*}
\begin{eqnarray*}
&& \hspace*{-1.2cm} \tilde p^{\nu,\mu}_{2r,2 s+1}(t)=\frac{2 {\rm
e}^{\mu^\nu t} }{\alpha_1+\alpha_2+\beta_1+\beta_2+2\mu}
\left\{-\alpha_1 \sum_{n=|s-r|}^{+\infty} \left(
\frac{\beta_1+\beta_2-\alpha_1-\alpha_2}{\alpha_1+\beta_1+\alpha_2+
\beta_2+2 \mu}  \right)^{2 n} \left( \frac{\alpha_1
\beta_1}{\alpha_2 \beta_2}  \right)^{s-r} \right.
\\
&& \hspace*{-1.2cm} \left. \times
\vartheta^n_{r,s}(\alpha_1,\alpha_2,\beta_1,\beta_2)
\frac{1}{(2n+1)!} \,{}_{1}\psi_{1} \left[\begin{array}{cc}
(1,\nu)\!\!  & \\
                   & ; -t \left(\frac{\alpha_1+\beta_1+\alpha_2+\beta_2}{2}+\mu \right)^{\nu} \\
(-2n,\nu)\!\! &
\end{array}\right]\right.
\\
&& \hspace*{-1.2cm} \left.
-\alpha_2 \sum_{n=|s-r+1|}^{+\infty} \left(
\frac{\beta_1+\beta_2-\alpha_1-\alpha_2}{\alpha_1+\beta_1+\alpha_2+
\beta_2+2 \mu}  \right)^{2 n}
 \left( \frac{\alpha_1
\beta_1}{\alpha_2 \beta_2}  \right)^{s-r+1}
\vartheta^n_{r,s+1}(\alpha_1,\alpha_2,\beta_1,\beta_2) \right.
\\
&& \hspace*{-1.2cm} \left.
\times \frac{1}{(2n+1)!} \,{}_{1}\psi_{1} \left[\begin{array}{cc}
(1,\nu)\!\!  & \\
                   & ; -t \left(\frac{\alpha_1+\beta_1+\alpha_2+\beta_2}{2}+\mu \right)^{\nu} \\
(-2n,\nu)\!\! &
\end{array}\right]\right\};
\end{eqnarray*}
\begin{eqnarray*}
&& \hspace*{-1.2cm} \tilde p^{\nu,\mu}_{2r+1,2 s}(t)=\frac{2 {\rm
e}^{\mu^\nu t} }{\alpha_1+\alpha_2+\beta_1+\beta_2+2\mu}
\left\{-\beta_2 \sum_{n=|s-r|}^{+\infty} \left(
\frac{\alpha_1+\alpha_2-\beta_1-\beta_2}{\alpha_1+\beta_1+\alpha_2+
\beta_2+2 \mu}  \right)^{2 n} \left( \frac{\alpha_1
\beta_1}{\alpha_2 \beta_2}  \right)^{s-r} \right.
\\
&& \hspace*{-1.2cm} \left. \times
\vartheta^n_{r,s}(\beta_1,\beta_2,\alpha_1,\alpha_2)
\frac{1}{(2n+1)!} \,{}_{1}\psi_{1} \left[\begin{array}{cc}
(1,\nu)\!\!  & \\
                   & ; -t \left(\frac{\alpha_1+\beta_1+\alpha_2+\beta_2}{2}+\mu \right)^{\nu} \\
(-2n,\nu)\!\! &
\end{array}\right]\right.
\\
&& \hspace*{-1.2cm} \left.
 -\beta_1 \sum_{n=|s-r-1|}^{+\infty} \left(
\frac{\alpha_1+\alpha_2-\beta_1-\beta_2}{\alpha_1+\beta_1+\alpha_2+
\beta_2+2 \mu}  \right)^{2 n}  \left( \frac{\alpha_1
\beta_1}{\alpha_2 \beta_2}  \right)^{s-r-1}
\vartheta^n_{r,s-1}(\beta_1,\beta_2,\alpha_1,\alpha_2) \right.
\\
&& \hspace*{-1.2cm} \left. \times
\frac{1}{(2n+1)!} \,{}_{1}\psi_{1} \left[\begin{array}{cc}
(1,\nu)\!\!  & \\
                   & ; -t \left(\frac{\alpha_1+\beta_1+\alpha_2+\beta_2}{2}+\mu \right)^{\nu} \\
(-2n,\nu)\!\! &
\end{array}\right] \right\};
\end{eqnarray*}
\begin{eqnarray*}
&& \hspace*{-1cm} \tilde p^{\nu,\mu}_{2r+1,2 s+1}(t)={\rm
e}^{\mu^\nu t} \sum_{n=|s-r|}^{+\infty} \left(
\frac{\alpha_1+\alpha_2-\beta_1-\beta_2}{\alpha_1+\beta_1+\alpha_2+
\beta_2+2 \mu}  \right)^{2 n} \left( \frac{\alpha_1
\beta_1}{\alpha_2 \beta_2}  \right)^{s-r}
\vartheta^n_{r,s}(\beta_1,\beta_2,\alpha_1,\alpha_2)
\\
&& \hspace*{-1.2cm} \times  \left\{
\frac{1}{(2n)!} \,{}_{1}\psi_{1} \left[\begin{array}{cc}
(1,\nu)\!\!  & \\
                   & ; -t \left(\frac{\alpha_1+\beta_1+\alpha_2+\beta_2}{2}+\mu \right)^{\nu} \\
(1-2n,\nu)\!\! &
\end{array}\right]
-\frac{\alpha_1+\alpha_2-\beta_1-\beta_2}{\alpha_1+\beta_1+\alpha_2+
\beta_2+2 \mu} \right.
\\
&& \hspace*{-1.2cm}  \left. \times
\frac{1}{(2n+1)!} \,{}_{1}\psi_{1} \left[\begin{array}{cc}
(1,\nu)\!\!  & \\
                   & ; -t \left(\frac{\alpha_1+\beta_1+\alpha_2+\beta_2}{2}+\mu \right)^{\nu} \\
(-2n,\nu)\!\! &
\end{array}\right]\right\}.
\end{eqnarray*}
(ii) Assume that $\alpha_1+\alpha_2=\beta_1+\beta_2$. Then, for
all $s,r\in\mathbb{Z}$, we have the following four cases:
\begin{eqnarray*}
&& \hspace*{-2cm}
\tilde p^{\nu,\mu}_{2r,2 s}(t)={\rm e}^{\mu^\nu t}
\sum_{n=|s-r|}^{+\infty} \left( \frac{1}{\alpha_1+\alpha_2+ \mu}
\right)^{2 n} \left( \frac{\alpha_1 \beta_1}{\alpha_2 \beta_2}
\right)^{s-r} \eta^n_{r,s}(\alpha_1,\alpha_2,\beta_1,\beta_2)
\\
&& \hspace*{-0.7cm}
\times
\frac{1}{(2n)!} \,{}_{1}\psi_{1} \left[\begin{array}{cc}
(1,\nu)\!\!  & \\
                   & ; -t \left( \alpha_1+\alpha_2 +\mu \right)^{\nu} \\
(1-2n,\nu)\!\! &
\end{array}\right];
\end{eqnarray*}
\begin{eqnarray*}
&& \hspace*{-1.2cm} \tilde p^{\nu,\mu}_{2r,2 s+1}(t)={\rm
e}^{\mu^\nu t} \left\{-\alpha_1 \sum_{n=|s-r|}^{+\infty} \left(
\frac{1}{\alpha_1+\alpha_2+ \mu}  \right)^{2 n+1} \left(
\frac{\alpha_1 \beta_1}{\alpha_2 \beta_2}  \right)^{s-r}
 \eta^n_{r,s}(\alpha_1,\alpha_2,\beta_1,\beta_2)
 \right.
 \\
&& \hspace*{-1.2cm}
 \left.
\times
\frac{1}{(2n+1)!} \,{}_{1}\psi_{1} \left[\begin{array}{cc}
(1,\nu)\!\!  & \\
                   & ; -t \left( \alpha_1+\alpha_2 +\mu \right)^{\nu} \\
(-2n,\nu)\!\! &
\end{array}\right]
-\alpha_2 \sum_{n=|s-r+1|}^{+\infty} \left(
\frac{1}{\alpha_1+\alpha_2+ \mu}  \right)^{2 n+1}
\right.
 \\
&& \hspace*{-1.2cm} \times
\left.
\left(\frac{\alpha_1 \beta_1}{\alpha_2 \beta_2}  \right)^{s-r+1}
  \eta^n_{r,s+1}(\alpha_1,\alpha_2,\beta_1,\beta_2)
\frac{1}{(2n+1)!} \,{}_{1}\psi_{1} \left[\begin{array}{cc}
(1,\nu)\!\!  & \\
                   & ; -t \left( \alpha_1+\alpha_2 +\mu \right)^{\nu} \\
(-2n,\nu)\!\! &
\end{array}\right]\right\};
\end{eqnarray*}
\begin{eqnarray*}
&& \hspace*{-1.2cm}
\tilde p^{\nu,\mu}_{2r+1,2 s}(t)=
{\rm e}^{\mu^\nu t} \left\{-\beta_2 \sum_{n=|s-r|}^{+\infty}
\left(\frac{1}{\alpha_1+\alpha_2+ \mu}  \right)^{2 n+1}
\left(\frac{\alpha_1 \beta_1}{\alpha_2 \beta_2}  \right)^{s-r}
 \eta^n_{r,s}(\beta_1,\beta_2,\alpha_1,\alpha_2)
 \right.
 \\
&& \hspace*{-1.2cm} \times
 \left.
\frac{1}{(2n+1)!} \,{}_{1}\psi_{1} \left[\begin{array}{cc}
(1,\nu)\!\!  & \\
                   & ; -t \left( \alpha_1+\alpha_2 +\mu \right)^{\nu} \\
(-2n,\nu)\!\! &
\end{array}\right]
-\beta_1 \sum_{n=|s-r-1|}^{+\infty} \left( \frac{1}{\alpha_1+\alpha_2+ \mu}
\right)^{2 n+1}
\right.
\\
&& \hspace*{-1.2cm}  \times
 \left.
\left( \frac{\alpha_1 \beta_1}{\alpha_2 \beta_2}\right)^{s-r-1}
  \eta^n_{r,s-1}(\beta_1,\beta_2,\alpha_1,\alpha_2)
  \frac{1}{(2n+1)!} \,{}_{1}\psi_{1} \left[\begin{array}{cc}
(1,\nu)\!\!  & \\
                   & ; -t \left( \alpha_1+\alpha_2 +\mu \right)^{\nu} \\
(-2n,\nu)\!\! &
\end{array}\right]\right\};
\end{eqnarray*}
\begin{eqnarray*}
&& \hspace*{-1.2cm}
\tilde p^{\nu,\mu}_{2r+1,2 s+1}(t)={\rm e}^{\mu^\nu t}
\sum_{n=|s-r|}^{+\infty} \left( \frac{1}{\alpha_1+\alpha_2+ \mu}
\right)^{2 n} \left( \frac{\alpha_1 \beta_1}{\alpha_2 \beta_2}
\right)^{s-r} \eta^n_{r,s}(\beta_1,\beta_2,\alpha_1,\alpha_2)
\\
&& \hspace*{-1.2cm} \times
  \frac{1}{(2n)!} \,{}_{1}\psi_{1} \left[\begin{array}{cc}
(1,\nu)\!\!  & \\
                   & ; -t \left( \alpha_1+\alpha_2 +\mu \right)^{\nu} \\
(1-2n,\nu)\!\! &
\end{array}\right].
\end{eqnarray*}

\end{proposition}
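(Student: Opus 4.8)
The plan is to obtain each $\tilde p_{k,n}^{\nu,\mu}(t)$ by conditioning on the value of the subordinator and then reducing everything to a single elementary moment computation. Since $\tilde X^{\nu,\mu}(t)=X^1(\tilde S^{\nu,\mu}(t))$ with $\{\tilde S^{\nu,\mu}(t):t\ge 0\}$ independent of $\{X^1(t):t\ge 0\}$ and $\tilde S^{\nu,\mu}(0)=0$, for every $t>0$ we have
$$\tilde p_{k,n}^{\nu,\mu}(t)=\int_0^\infty P(X^1(x)=n\mid X^1(0)=k)\,f_{\tilde S^{\nu,\mu}(t)}(x)\,dx=\mathbb{E}\big[p_{k,n}(\tilde S^{\nu,\mu}(t))\big],$$
where $p_{k,n}(\cdot)$ are the functions in Proposition~\ref{prop:pmf-expressions} (this is the rigorous form of ``extracting the coefficient of $z^n$ in $\tilde F_k^{\nu,\mu}(z,t)=\mathbb{E}[F_k(z,\tilde S^{\nu,\mu}(t))]$''). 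Inspecting Proposition~\ref{prop:pmf-expressions}, in both cases (i) and (ii) every $p_{k,n}(x)$ is a series in $n$ whose summands are $x$--independent constants (the ratios $(\alpha_1\beta_1/\alpha_2\beta_2)^{s-r}$, the coefficients $\vartheta$ or $\eta$, and powers of $\tfrac{\beta_1+\beta_2-\alpha_1-\alpha_2}{2}$ or $\tfrac{\alpha_1+\alpha_2-\beta_1-\beta_2}{2}$) multiplied by elementary building blocks $\tfrac{x^{m}}{m!}\,e^{-\delta x}$, with $m\in\{2n,2n+1\}$ and $\delta=\tfrac{\alpha_1+\alpha_2+\beta_1+\beta_2}{2}$ in case (i), $\delta=\alpha_1+\alpha_2$ in case (ii). Thus, after interchanging $\mathbb{E}$ with the sum over $n$, it suffices to evaluate $\mathbb{E}\big[\tfrac1{m!}(\tilde S^{\nu,\mu}(t))^{m}e^{-\delta\tilde S^{\nu,\mu}(t)}\big]$.

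For this key step I would use $f_{\tilde S^{\nu,\mu}(t)}(x)=e^{-\mu x+\mu^\nu t}f_{S^\nu(t)}(x)$ together with the Laplace transform $\int_0^\infty e^{-\lambda x}f_{S^\nu(t)}(x)\,dx=e^{-t\lambda^\nu}$ (consistent with \eqref{eq:fgm-stable-subordinator}), which gives, for $\lambda=\delta+\mu>0$,
$$\mathbb{E}\Big[\tfrac1{m!}(\tilde S^{\nu,\mu}(t))^{m}e^{-\delta\tilde S^{\nu,\mu}(t)}\Big]=\frac{(-1)^m e^{\mu^\nu t}}{m!}\left.\frac{d^m}{d\lambda^m}e^{-t\lambda^\nu}\right|_{\lambda=\delta+\mu}.$$
Expanding $e^{-t\lambda^\nu}=\sum_{j\ge 0}\frac{(-t)^j}{j!}\lambda^{\nu j}$, differentiating termwise via $\frac{d^m}{d\lambda^m}\lambda^{\nu j}=\frac{\Gamma(\nu j+1)}{\Gamma(\nu j+1-m)}\lambda^{\nu j-m}$, and recognizing the definition \eqref{ppsiq} of the generalized Fox--Wright function with $p=q=1$, one obtains
$$\mathbb{E}\Big[\tfrac1{m!}(\tilde S^{\nu,\mu}(t))^{m}e^{-\delta\tilde S^{\nu,\mu}(t)}\Big]=\frac{(-1)^m e^{\mu^\nu t}}{m!\,(\delta+\mu)^{m}}\;{}_{1}\psi_{1}\!\left[\begin{array}{cc}(1,\nu) & \\ & ;\,-t(\delta+\mu)^{\nu}\\ (1-m,\nu) & \end{array}\right].$$
Both interchanges (expectation with the $n$--sum, and differentiation with the $j$--sum) are legitimate by absolute convergence: $\delta+\mu>0$ makes $x\mapsto x^{m}e^{-\lambda x}$ integrable against $f_{S^\nu(t)}$ for every $\lambda>0$, and $\lambda\mapsto e^{-t\lambda^\nu}$ is real--analytic on $(0,\infty)$; when $\mu>0$ one may alternatively invoke \eqref{eq:mgf-TSS}, which makes $\mathbb{E}[e^{\epsilon\tilde S^{\nu,\mu}(t)}]$ finite for small $\epsilon>0$.

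It then remains to substitute this identity, with $m=2n$ and $m=2n+1$ and $\delta+\mu=\tfrac{\alpha_1+\alpha_2+\beta_1+\beta_2}{2}+\mu$ (resp.\ $\alpha_1+\alpha_2+\mu$), back into $\mathbb{E}[p_{k,n}(\tilde S^{\nu,\mu}(t))]$. The powers of $\tfrac{\beta_1+\beta_2-\alpha_1-\alpha_2}{2}$ (or $\tfrac{\alpha_1+\alpha_2-\beta_1-\beta_2}{2}$) from Proposition~\ref{prop:pmf-expressions} merge with $(\delta+\mu)^{-m}$ into powers of $\tfrac{\beta_1+\beta_2-\alpha_1-\alpha_2}{\alpha_1+\beta_1+\alpha_2+\beta_2+2\mu}$ (since $\tfrac{A}{\delta+\mu}=\tfrac{2A}{2(\delta+\mu)}$); when the power of $x$ exceeds the power of the difference by one, the residual factor $(\delta+\mu)^{-1}$ produces the prefactor $\tfrac{2}{\alpha_1+\alpha_2+\beta_1+\beta_2+2\mu}$; and $(-1)^m$ with $m$ odd produces the minus signs. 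The bottom Fox--Wright parameter being $1-m$ gives $(1-2n,\nu)$ or $(-2n,\nu)$ accordingly, while the choices $\vartheta(\alpha_1,\alpha_2,\beta_1,\beta_2)$ versus $\vartheta(\beta_1,\beta_2,\alpha_1,\alpha_2)$ (and $\eta$ in place of $\vartheta$ in case (ii)) are inherited verbatim from Proposition~\ref{prop:pmf-expressions}, in accordance with Remark~\ref{rem:exchange-parameters}. I expect the main obstacle to be precisely this final bookkeeping --- matching, across all four sub-cases in (i) and (ii), the correct power of the ratio and the correct ${}_{1}\psi_{1}$ arguments and parameters to each monomial, with all signs in place --- since the genuinely analytic content (the ${}_{1}\psi_{1}$ identity above and the justification of the interchanges) is short.
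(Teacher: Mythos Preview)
Your proposal is correct. The paper gives no detailed proof for this proposition; the only guidance in the appendix is the sentence ``These formulas can be obtained by extracting suitable coefficients of the probability generating functions above; see Propositions \ref{prop:pgf}, \ref{prop:pgf-fractional} and \ref{prop:pgf-TSS}, respectively.'' Read literally, this suggests expanding the closed form of $\tilde F_k^{\nu,\mu}(z,t)$ in Proposition~\ref{prop:pgf-TSS} as a Laurent series in $z$ and reading off each coefficient, which would re-do the combinatorics already carried out for Proposition~\ref{prop:pmf-expressions}.

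Your route is slightly different and more economical: you observe that extracting the $z^n$--coefficient from $\tilde F_k^{\nu,\mu}(z,t)=\mathbb{E}[F_k(z,\tilde S^{\nu,\mu}(t))]$ is the same as computing $\mathbb{E}[p_{k,n}(\tilde S^{\nu,\mu}(t))]$, and then you feed in the series for $p_{k,n}(\cdot)$ already obtained in Proposition~\ref{prop:pmf-expressions}. This factors the problem through the non-fractional result and reduces everything to the single moment identity
\[
\mathbb{E}\Big[\tfrac{1}{m!}(\tilde S^{\nu,\mu}(t))^m e^{-\delta\tilde S^{\nu,\mu}(t)}\Big]
=\frac{(-1)^m e^{\mu^\nu t}}{m!\,(\delta+\mu)^m}\;
{}_{1}\psi_{1}\!\left[\begin{array}{cc}(1,\nu)&\\&;\,-t(\delta+\mu)^\nu\\(1-m,\nu)&\end{array}\right],
\]
which you derive correctly by termwise differentiation of $e^{-t\lambda^\nu}$ (the $j=0$ term disappears because $1/\Gamma(1-m)=0$ for $m\ge 1$, and for $j\ge 1$ the ratio $\Gamma(\nu j+1)/\Gamma(\nu j+1-m)\sim(\nu j)^m$ keeps the series absolutely convergent). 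The subsequent bookkeeping --- pairing $A^m/(\delta+\mu)^m$ into the ratio $\frac{\beta_1+\beta_2-\alpha_1-\alpha_2}{\alpha_1+\alpha_2+\beta_1+\beta_2+2\mu}$, tracking the sign $(-1)^{2n+1}=-1$, and leaving one stray $(\delta+\mu)^{-1}$ as the prefactor $2/(\alpha_1+\alpha_2+\beta_1+\beta_2+2\mu)$ in the odd--even cases --- is exactly as you describe. What you gain over the paper's hint is that the coefficient extraction has already been done once (in Proposition~\ref{prop:pmf-expressions}), so nothing combinatorial is repeated; what the paper's route would buy is independence from Proposition~\ref{prop:pmf-expressions}, at the cost of redoing that work.
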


\begin{remark}\label{rem:how-to-recover-the-nonfractional-case-TSS}
If $\nu=\mu=1$, then Proposition \ref{prop:pmf-expressions-TSS}
coincides with Proposition \ref{prop:pmf-expressions} noting that
$$ \,{}_{1}\psi_{1} \left[\begin{array}{cc}
(1,1)\!\!  & \\
                   & ; -t \left(\frac{\alpha_1+\beta_1+\alpha_2 +\beta_2}{2}+1 \right) \\
(1-l,1)\!\! &
\end{array}\right]
=\frac{(-t)^l}{2^l}
(2+\alpha_1+\alpha_2+\beta_1+\beta_2)^l {\rm e}^{-\frac{t}{2}
(2+\alpha_1+\alpha_2+\beta_1+\beta_2)}.$$
\end{remark}

\end{document}